\newtheorem{theorem}{Theorem}[section]
\newtheorem{remark}[theorem]{Remark}
\newtheorem{assumption}[theorem]{Assumption}
\newtheorem{lemma}[theorem]{Lemma}
\newtheorem{definition}{Definition}[]
\theoremstyle{plain}
\def \R{\mathbb{R}}
\def \eps{\varepsilon}
\DeclareMathOperator*{\argmin}{arg\,min}
\DeclareMathOperator*{\esssup}{ess\,sup}
\title[ Nonzero-Sum Submodular Monotone-Follower Games]{Nonzero-Sum Submodular Monotone-Follower Games:\\ Existence and Approximation of Nash Equilibria}
\author[Dianetti]{Jodi Dianetti}
\author[Ferrari]{Giorgio Ferrari}
\keywords{}
\address{J.~Dianetti: Center for Mathematical Economics (IMW), Bielefeld University, Universit\"atsstrasse 25, 33615, Bielefeld, Germany}
\email{\href{mailto:giorgio.ferrari@uni-bielefeld.de}{jodi.dianetti@uni-bielefeld.de}}
\address{G.~Ferrari: Center for Mathematical Economics (IMW), Bielefeld University, Universit\"atsstrasse 25, 33615, Bielefeld, Germany}
\email{\href{mailto:giorgio.ferrari@uni-bielefeld.de}{giorgio.ferrari@uni-bielefeld.de}}
\date{\today}
\numberwithin{equation}{section}
\begin{document}
\begin{abstract}
We consider a class of ${N}$-player stochastic games of multi-dimensional singular control,  in which each player faces a minimization problem of monotone-follower type with submodular costs. We call these games \emph{monotone-follower games}. In a not necessarily Markovian setting, we establish the existence of Nash equilibria. 
Moreover, we introduce a sequence of approximating games by restricting, for each $n\in \mathbb{N}$, the players' admissible strategies to the set of Lipschitz  processes with Lipschitz constant bounded by $n$. We prove that, for each $n\in \mathbb{N}$, there exists a Nash equilibrium of the approximating game and that the sequence of Nash equilibria converges, in the Meyer-Zheng sense, to a weak (distributional) Nash equilibrium of the original game of singular control. As a byproduct, such a convergence also provides approximation results of the equilibrium values across the two classes of games. We finally show how our results can be employed to prove existence of open-loop Nash equilibria in an $N$-player stochastic differential game with singular controls, and we propose an algorithm to determine a Nash equilibrium for the monotone-follower game. 
\end{abstract}
\maketitle
\smallskip
{\textbf{Keywords}}: nonzero-sum games; singular control; submodular games; Meyer-Zheng topology; Pontryagin maximum principle; Nash equilibrium; stochastic differential games; monotone-follower problem.

\smallskip
{\textbf{AMS subject classification}}: 91A15, 06B23, 49J45, 60G07, 91A23, 93E20.


\begin{section}
{Introduction}

We consider a class of stochastic $N$-player games over a finite time-horizon in which each  player, indexed by $i=1,...,N$, faces a multi-dimensional singular stochastic control problem of monotone-follower type. On a complete probability space, consider a multi-dimensional c\`adl\`ag (i.e., right-continuous with left limits) process $L$ and, for $i=1,...,N$, multi-dimensional continuous semimartingales $f^i$ with nonnegative components.  Denote by $\bar{\mathbb{F}}_+^{f,L}$ the right-continuous extension of the filtration generated by $f=(f^1,...,f^N)$ and $L$, augmented by the sets of zero probability. We call \emph{monotone-follower game}  the game in which each player $i$ is allowed to choose a multi-dimensional control $A^i $ in the set of \emph{admissible strategies}
\begin{equation*}
\mathcal{A}: = \left\{ \text{$\bar{\mathbb{F}}_+^{f,L}$-adapted processes with nondecreasing, nonnegative and c\`adl\`ag components}  \right\},
\end{equation*}
in order to minimize the cost functional 
\begin{align*}
\mathcal{J}^i(A^i,A^{-i} ) := \mathbb{E} \bigg[ \int_0^T h^i(L_t, A_t^i,A_t^{-i})\, dt +g^i(L_T,A_T^i,A_T^{-i}) + \int_{[0,T]} f_t^i \, dA_t^i   \bigg],
\end{align*}
where $A^{-i}:=(A^j)_{j\ne i}$. Here $T<\infty$ and $h^i$ and $g^i$ are suitable nonnegative convex cost functions.

Next, we introduce a sequence of approximating games with regular controls in the following way.
For each $n\in \mathbb{N}$, define the \emph{$n$-Lipschitz game} as the game in which players are restricted to pick a Lipschitz control in the set of \emph{admissible $n$-Lipschitz strategies}
\begin{equation*}
\mathcal{L}(n) = \left\{ A \in \mathcal{A} \, | \, A  \text{ is Lipschitz with Lipschitz constant smaller that }  n \text{ and } A_0=0 \right\},
\end{equation*}
in order to minimize the cost functionals $\mathcal{J}^i$.

Our main contributions are the following.
\begin{enumerate} 
    \item  Under submodularity conditions on the functions $h^i$ and $g^i$, we establish the existence of Nash equilibria for the monotone-follower and the $n$-Lipschitz games.
    \item We show  connections across these two classes of games. In particular: 
    \begin{enumerate}[label=(\roman*)]
         \item Any sequence obtained by choosing, for each $n \in\mathbb{N}$, a Nash equilibrium of the $n$-Lipschitz game is relatively compact in the Meyer-Zheng topology, and any accumulation point of this sequence is the law of a \emph{weak Nash equilibrium} of the monotone-follower game (see Definition \ref{def_weak_nash} below). That is, any accumulation point is a Nash equilibrium on a suitable probability space on which are defined processes $\bar{f}$ and $\bar{L}$ such that their joint law coincides with the joint law of $f$ and $L$.
        \item The $N$-dimensional vector whose components are the expected costs associated to any weak Nash equilibrium obtained through the previous approximation is a \emph{Nash equilibrium payoff}. Moreover, for each $\eps>0$, there exist $n_\eps \in \mathbb{N}$ large enough and a Nash equilibium of the $n_\eps$-Lipschitz game which is an $\eps$-Nash equilibrium of the monotone-follower game. 
     \end{enumerate}
\end{enumerate}
Furthermore, we provide applications of our results to deduce existence of Nash equilibria for a class of stochastic differential games with singular controls and non-Markovian random costs. Also, in the spirit of \cite{To}, we construct an algorithm to determine a Nash equilibrium of the monotone-follower game. 

To the best of our knowledge, general existence and approximation results for Nash equilibria in $N$-player non-Markovian stochastic games of multi-dimensional singular control appear in this paper for the first time.

\subsection{Background literature.} 
A singular stochastic control problem appears for the first time in \cite{Bather&Chernoff67}, where the problem of controlling the motion of a spaceship has been addressed. Later on, examples of solvable singular stochastic control problems have been studied in \cite{Benes&Shepp&Witsenhausen80}. 

Singular stochastic control problems of monotone-follower type have been introduced and studied in \cite{Karatzas81} and \cite{Karatzas&Shreve84}. A monotone-follower problem is the problem of tracking a stochastic process by a nondecreasing process in order to optimize a certain performance criterion. Since then, this class of problems has found many applications in economics and finance (see \cite{Bank&Riedel01},    \cite{Chiarolla&Ferrari14}, \cite{Davis&Norman90}, among many others), operations research (see, e.g., \cite{Guo&Kaminsky&Tomecek&Yuen11} and \cite{Harrison&Taksar83}), queuing theory (see, e.g., \cite{Krichagina&Taksar92}), mathematical biology (see, e.g., \cite{Alvarez&Sheep98}), aerospace engineering (see, e.g., \cite{Menaldi&Taksar89}), and insurance mathematics (see \cite{Lokka&Zervos08}, among others). 

The literature on singular stochastic control problems experienced results on existence of minima (or maxima) (see  \cite{Dufour&Miller04} and \cite{Haussmann&Suo95}, among others), characterization of the optimizers through first order conditions (see, e.g., \cite{Bank05},  \cite{Bank&Riedel01} and \cite{Cadenillas&Haussman94}), as well as connections to optimal stopping problems (see, e.g., \cite{Karatzas&Shreve84} or the more recent \cite{Boetius&Kohlmann98}) and to constrained backward stochastic differential equations \cite{Bouchardetal}. We also mention the recent work \cite{LZ}, as their version of the monotone-follower problem is  the single-agent version (in weak formulation) of our game.  

The number of contributions on games of singular controls is still quite limited (see \cite{DeAngelis&Ferrari18}, \cite{Ferrari&Riedel&Steg}, \cite{Guo&Tang&Xu18}, \cite{Guo&Xu18},   \cite{Kwon&Zhang15}, \cite{Steg}, \cite{Wang&Wang&Teo18}), although these problems have received an increasing interest in the recent years. We briefly discuss here some of these works. In \cite{Steg} it is determined a symmetric Nash equilibrium of a monotone-follower game with symmetric payoffs (i.e., the cost functional is the same for all players), and it is provided a characterization of any equilibria through a system of first order conditions. The same approach is also followed in \cite{Ferrari&Riedel&Steg} for a game in which  players are allowed to choose a regular control and a singular control. Such a problem has been motivated by a question arising in public economic theory. 
A general characterization of Nash equilibria through the Pontryagin Maximum Principle approach  has been investigated in the recent \cite{Wang&Wang&Teo18} for  regular-singular stochastic differential games. Connections between nonzero-sum games of singular control and games of optimal stopping have been tackled in \cite{DeAngelis&Ferrari18}. It is also worth mentioning some recent  works on mean field games with singular controls (see \cite{Fu&Horst17} and \cite{Guo&Lee17}) and their connection to symmetric $N$-player games (see \cite{Guo&Xu18}). A complete analysis of a Markovian $N$-player stochastic game in which players can control an underlying diffusive dynamic through a control of bounded-variation is provided in the recent \cite{Guo&Tang&Xu18}. There, the authors derive a Nash equilibrium by solving a system of \emph{moving} free boundary problems. General existence results for stochastic games with multi-dimensional singular controls and non-Markovian costs were, however, missing in the literature, and this has motivated our study.

\subsection{Our results} We now provide more details on our results by discussing the ideas and techniques of their proofs.
\smallbreak

\emph{The existence results.} Going back to the seminal ideas of J.\ Nash, a typical way to prove existence of Nash equilibria is to show existence of a fixed point for the best-reply map.  In the spirit of \cite{To}, our strategy to prove existence of Nash equilibria in the monotone-follower game and in the $n$-Lipschitz game is to exploit the submodular structure of our games in order to apply a lattice-theoretical fixed point theorem: the Tarski's fixed point theorem (see \cite{T}). We proceed as follows. We first endow the spaces of admissible strategies $\mathcal{A}$ and $\mathcal{L}(n)$ (defined above) with a lattice structure. While the lattice $\mathcal{L}(n)$ is complete, the same does not hold true for $\mathcal{A}$. To overcome this problem, we show that, under suitable assumptions, each ``reasonable''  strategy lives in a bounded subset of $\mathcal{A}$, and we restrict our analysis to this subset. 
We then prove that the best-reply maps are non empty. To accomplish this task in the $n$-Lipschitz game, we employ the so-called classical \emph{direct method}.
Indeed, since each strategy is forced to be $n$-Lipschitz, then the sequence of time-derivatives of any minimizing sequence is bounded in $\mathbb{L}^2$. 
Hence, Banach-Saks' theorem, together with the lower semi-continuity and the convexity of the costs, allows to conclude existence of the minima. On the other hand, for the monotone-follower game we use  some more recent techniques already employed to prove existence of optimizers in singular stochastic control problems (see \cite{Bank&Riedel01}). Assuming a uniform coercivity condition on the costs (which has to be, anyway, necessarily satisfied in any Nash equilibria; see Remark \ref{necessity.of.coercivity} below) we can use a theorem by Y.M.\ Kabanov (see Lemma 3.5 in \cite{K}) which gives relative sequential compactness, in the Ces\`aro sense, of any minimizing sequence. Then, exploiting again the lower semi-continuity and the convexity of the cost functions, we conclude existence of the minima. Next, we show that the best-reply maps preserve the order in the spaces of admissible strategies, and for this the submodular condition is essential. The existence then follows by invoking Tarski's fixed point theorem. 

Our result also generalizes to the monotone-follower game in which players are allowed to choose both a regular control and a singular control (see Remark \ref{existenceRegularSingular}). Moreover, some of our assumptions are not needed if we impose \emph{finite-fuel constraints} (see Remark \ref{FiniteFuelConstraint}).

It is worth stressing that our proof strongly hinges on the submodularity assumption, which, however, is a typical requirement in many problems arising in applications  (see, e.g., \cite{Milgrom&Roberts90}, \cite{To}, \cite{Vives90}, or the books \cite{Topkis11}  and \cite{Vives01} and the references therein).
\smallbreak
\emph{The approximation results.} 
Singular control problems naturally arise to overcome the ill-posedness of standard stochastic control problems in which the control linearly affects the dynamics of the state variable, and the cost of control is proportional to the effort.
Some kind of connection between regular control problems with the linear structure described above and singular control problems is then expected, and actually already discussed in the literature (see, e.g, the early \cite{Menaldi&Taksar89} for an analytical approach, and \cite{LZ} for a probabilistic approach). In Theorem 21 of \cite{LZ}, it is shown that any sequence obtained by choosing, for each $n\in \mathbb{N}$, a minimizer of the monotone-follower problem when the class of admissible controls is restricted to the set of $n$-Lipschitz controls, suitably approximates a (weak) optimal solution to the original monotone-follower problem.

We  prove that any sequence of Nash equilibria of the $n$-Lipschitz game is weakly relatively compact, and that any accumulation point is a weak Nash equilibrium of the monotone-follower game. We first show that this sequence satisfies a tightness criterion for the Meyer-Zheng topology. Then, we prove that  any Nash equilibrium of the $n$-Lipschitz game necessarily satisfies a system of  stochastic equations. After changing the underlying probability space by a Skorokhod representation, we  pass to the limit in these systems of equations and we deduce that any accumulation point solves  a new system of  stochastic equations. These equations can be viewed as a version of the Pontryagin maximum principle, and they are sufficient to ensure that the limit point is a Nash equilibrium in the new probability space, hence a weak Nash equilibrium. 

As a byproduct of this result, we are able to show that, for each $\eps >0$, there exists $n\in \mathbb{N}$ large enough such that the Nash equilibrium of the $n$-Lipschitz game is an $\eps$-Nash equilibrium of the monotone-follower game. This gives a clearer interpretation of the weak Nash equilibrium found through the approximation: the $N$-dimensional vector whose components are the expected costs associated to the weak Nash equilibrium is, in fact, a \emph{Nash equilibrium payoff} (as defined in \cite{Cardaliaguet2004}) of the monotone-follower game. 

\smallbreak
\emph{Applications and examples.}
Our existence result applies to deduce existence of open-loop Nash equilibria in stochastic differential games with  singular controls and non-Markovian random costs, whenever a certain structure is preserved by the dynamics. For the sake of illustration, we consider the case in which the dynamics of the state variable of each player are a linearly controlled geometric Brownian motion and a linearly controlled Ornstein-Uhlenbeck process.

Moreover, we consider the algorithm introduced by Topkis (see Algorithm II in \cite{To}) for submodular games: given as initial point the constantly null profile strategy, this algorithm consists of an iteration of the best-reply map. We show that, also in our setting, this algorithm converges to a Nash equilibrium.

\subsection{Organization of the paper} In Section \ref{sec:Definition of the Monotone-Follower Game}  we introduce the monotone-follower game.  Sections \ref{Section.Existence.Nash.Equilibria} and \ref{sec:nLipGame} are devoted to the existence theorems of Nash equilibria for the submodular monotone-follower game and for the $n$-Lipschitz game, respectively. The approximation results are contained in Section \ref{Section.Existence.Approximation.Weak.Nash}. The application of our result to suitable stochastic differential games is provided in Section \ref{Section.Application.Examples}, together with the proof of the convergence to a Nash equilibrium of a certain algorithm. In  Appendix \ref{App:MZ} we  recall some results about the Meyer-Zheng topology.

\subsection{Notations.}In the rest of this paper, for $m \in \mathbb{N}$ and $x,y \in \R^m$, we denote by $xy$ the scalar product in $\R^m$, as well as by $|\cdot|$ the Euclidean norm in $\R^m$. For $x,y \in \R^m$ and $c \in \R$,  we will write $x \leq y$ if $x^\ell \leq y^\ell$ for each $\ell=1,...,m$, as well as $x \leq c$ if $x^\ell \leq c $ for each $\ell=1,...,m$. Moreover, we set $x \land y := (x^1\land y^1,...,x^m\land y^m)$ and $x \lor y := (x^1 \lor y^1,...,x^m \lor y^m)$, where  $x^\ell \land y^\ell :=\min\{ x^\ell , y^\ell \}$ and $x^\ell \lor y^\ell :=\max\{ x^\ell , y^\ell \}$ for each $\ell=1,...,m$.
Finally, for $d, \, N \in \mathbb{N}$, and $a=(a^1,...,a^N) \in \R^{Nd}$, for each $i=1,...,N$ set  $a^{-i}:=(a^1,...,a^{i-1},a^{i+1},...,a^N) \in \R^{(N-1)d}$ and, for $v \in \R^d$, set $(v,a^{-i}):=(a^1,...,a^{i-1},v,a^{i+1},...,a^N)\in \R^{Nd}$.

\end{section}


\begin{section}{The Monotone-Follower Game}
\label{Section.Games.of.Monotone.Follower.Type}
\subsection{Definition of the Monotone-Follower Game}
\label{sec:Definition of the Monotone-Follower Game} 

Fix a complete probability space $(\Omega, \mathcal{F}, \mathbb{P})$, a finite time horizon $T\in (0,\infty)$, an integer $N \geq 2$ and $k,d \in \mathbb{N}$. Consider a c\`adl\`ag process $L:\Omega \times [0,T] \rightarrow \mathbb{R}^k$, and, for $i=1,...,N$, assume to be given continuous semimartingales $f^i :\Omega \times [0,T] \rightarrow \mathbb{R}_+^d$, and set $f:=(f^1,...,f^N)$. Denote by $\bar{\mathbb{F}}_+^{f,L}=\{ \bar{\mathcal{F}}_{t+}^{f,L} \}_{t \in [0,T]}$ the right-continuous extension of the filtration generated by $f$ and $L$, augmented by the $\mathbb{P}$-null sets.

Define the space of \emph{admissible strategies}
\begin{equation}\label{def.admissible_strategy}
\mathcal{A}: = \left\{ \, V :\Omega \times [0,T] \rightarrow \R^d \, \bigg| \,\begin{matrix} V \text{ is an $\bar{\mathbb{F}}_+^{f,L}$-adapted  c\`adl\`ag process, with} \\ \text{ nondecreasing and nonnegative components} \end{matrix} \, \right\},
\end{equation}
and let $\mathcal{A}^N:= \bigotimes_{i=1}^N \mathcal{A}$ denote the set of \emph{admissible profile strategies}. In order to avoid confusion, in the following we will denote profile strategies in bold letters.
 
For each $i=1,...,N$, consider measurable functions $h^i,g^i:\mathbb{R}^k \times \mathbb{R}^{Nd} \rightarrow [0,\infty)$. We define the \emph{monotone-follower game} as the game in which each player $i \in \{ 1,...,N \}$ is allowed to choose an admissible strategy $A^i \in \mathcal{A}$ in order to minimize the cost functional 
\begin{align*}
\mathcal{J}^i(A^i,A^{-i} ):=& \mathbb{E} [C^i(f,L,\textbf{A})] := \mathbb{E} \bigg[ \int_0^T h^i(L_t, \textbf{A}_t)\, dt +g^i(L_T,\textbf{A}_T) + \int_{[0,T]} f_t^i \, dA_t^i   \bigg],
\end{align*}
where $A^{-i}:=(A^j)_{j\ne i}$ and $\textbf{A}:=(A^i,A^{-i}) \in \mathcal{A}^N$. Here and in the sequel the integrals with respect to $A^i$ are defined by
$$
\int_{[0,T]} f_t^i \, dA_t^i:= f_0^i A_0^i + \int_0^T f_t^i \, dA_t^i=  \sum_{\ell =1}^d f_0^{\ell,i} A_0^{\ell,i} + \sum_{\ell =1}^d \int_0^T f_t^{\ell} \, dA_t^{\ell,i}, 
$$
where the integrals on the right hand side are intended in the standard Lebesgue-Stieltjes sense on the interval $(0,T]$.

We recall the notion of Nash equilibrium.
\begin{definition}\label{def_of_Nash}
An admissible profile strategy \emph{$\bar{\textbf{A}} \in \mathcal{A}^N$} is a Nash equilibrium if, for every $i =1,...,N$, we have $\mathcal{J}^i(\bar{\mathbf{A}})< \infty$ and
$$
\mathcal{J}^i(\bar{A}^i,\bar{A}^{-i} ) \leq \mathcal{J}^i( V^i , \bar{A}^{-i} ), \quad \text{for every}\quad V^i \in \mathcal{A}.
$$   
\end{definition}
Letting $2^{\mathcal{A}} $ denote the set of all subset of $\mathcal{A}$, for each $i=1,...,N$ define the \emph{best-reply map} 
$R^i: \mathcal{A}^{N} \rightarrow 2^{\mathcal{A}} $ by 
\begin{equation}\label{bestreplymap}
R^i(\mathbf{A}):= \argmin_{V^i \in \mathcal{A}} \mathcal{J}^i(V^i,A^{-i}).
\end{equation}
Observe that the maps $R^i$ are constant in the variable $A^i$. Moreover define the map 
\begin{equation}\label{brm}
\textbf{R}:=(R^1,...,R^N):\mathcal{A}^N \rightarrow \bigotimes_{i=1}^N 2^{\mathcal{A}},
\end{equation}
and notice that the set of Nash equilibria coincides with the set of fixed points of the map $\textbf{R}$ which have finite values; that is, the set of $\bar{\mathbf{A}} \in \mathcal{A}^N$ such that  $\bar{\mathbf{A}} \in \mathbf{R}(\bar{\mathbf{A}})$ and $\mathcal{J}^i(\bar{\mathbf{A}})< \infty$ for every $i =1,...,N$.

\begin{remark}
The notion of equilibrium introduced above is that of the so-called Open-Loop Nash equilibrium. We focus on this specific class of equilibria since serious conceptual -- so far unsolved -- problems arise when one tries to define a game of singular controls with Closed-Loop strategies (see \cite{Back&Paulsen} for a discussion, and also \cite{Ferrari&Riedel&Steg} and \cite{Steg}).
\end{remark}

We now specify the structural hypothesis on the costs.
\begin{assumption}\label{H1} For each $i=1,...,N$ and for $\phi^i \in \{ h^i, g^i\}$ assume that:
\begin{enumerate}
\item \label{H1:1}  for each $(l,a^{-i}) \in \mathbb{R}^k\times\mathbb{R}^{(N-1)d}$, the function $\phi^i(l,\cdot,a^{-i})$ is lower semi-continuous, and strictly convex;
\item \label{H1:2} for each $l \in \mathbb{R}^k$ the function $\phi^i(l,\cdot,\cdot)$ has decreasing differences in $(a^i,a^{-i})$, i.e.
\begin{align*}
\phi^i(l,\bar{a}^i,{a}^{-i})- \phi^i(l,a^i,{a}^{-i})& \geq \phi^i(l,\bar{a}^i,\bar{a}^{-i})-  \phi^i(l,a^i,\bar{a}^{-i})  ,  
\end{align*}
for each $a,\bar{a} \in \mathbb{R}^{Nd}$ such that $\bar{a}\geq a$;
\item \label{H1:3} for each $(l,a^{-i}) \in \mathbb{R}^k\times\mathbb{R}^{(N-1)d}$, the function $\phi^i(l,\cdot,a^{-i})$ is submodular, i.e.
\begin{align*}
\phi^i(l,\bar{a}^i,{a}^{-i})+ \phi^i(l,a^i,{a}^{-i})& \geq \phi^i(l,\bar{a}^i \land a^i,{a}^{-i})+  \phi^i(l,\bar{a}^i \lor a^i,{a}^{-i})  ,  
\end{align*}
for each $a,\bar{a} \in \mathbb{R}^{Nd}$.
\end{enumerate}
\end{assumption}
Under Conditions $\ref{H1:2}$ and $\ref{H1:3}$ of Assumption \ref{H1} we refer to the game introduced above as to the \emph{submodular monotone-follower game} (see \cite{To} for a static deterministic $N$-player submodular game, and Theorem 2.6.1 and Corollary 2.6.1 at p.\ 44 in \cite{Topkis11} for further discussion on these conditions). The submodular structure of our game will play a fundamental role in the following.


\subsection{Existence of Nash Equilibria in the Submodular Monotone-Follower Game}
\label{Section.Existence.Nash.Equilibria}

Define the space of \emph{extended admissible strategies}
\begin{equation}\label{def.ext.admissible_strategy}
\mathcal{A}_\infty := \left\{ \, V :\Omega \times [0,T] \rightarrow [0,\infty ]^d \, \bigg| \,\begin{matrix} V \text{ is an $\bar{\mathbb{F}}_+^{f,L}$-adapted  c\`adl\`ag process,} \\ \text{ with nondecreasing  components} \end{matrix} \, \right\},
\end{equation}
and, on it, we define the order relation $\preccurlyeq$ such that, for $V,U \in \mathcal{A}_\infty$, one has
\begin{equation*}
V \preccurlyeq U \quad \Longleftrightarrow \quad V_t \leq U_t \quad \forall \, t\in [0,T] , \quad \mathbb{P}\text{-a.s.}
\end{equation*}
Moreover, we can endow the space $\mathcal{A}_\infty$ with a lattice structure, defining the processes $V \land U$ and $V \lor U$ as
\begin{equation*}
(V \land U)_t:= V_t \land U_t \quad \text{and} \quad (V \lor U)_t:=V_t \lor U_t \quad \forall t \in [0,T],\quad \mathbb{P}\text{-a.s.}
\end{equation*} 
In the same way, on the set of \emph{extended profile strategies} $\mathcal{A}_\infty^N := \bigotimes_{i=1}^N \mathcal{A}_\infty $,  define, for $\mathbf{A},\mathbf{B} \in \mathcal{A}_\infty^N$, an order relation $\preccurlyeq^N$ by
\begin{equation*}
\mathbf{A} \preccurlyeq^N \mathbf{B} \quad \Longleftrightarrow \quad A^i \preccurlyeq B^i \quad \forall \, i \in \{1,...,N\},
\end{equation*}
together with the lattice structure
\begin{equation*}
\mathbf{A} \land \mathbf{B}:=(A^1 \land B^1,...,A^N \land B^N) \quad \text{and} \quad \mathbf{A} \lor \mathbf{B}:=(A^1 \lor B^1,...,A^N \lor B^N).
\end{equation*}

We now provide an existence result for the submodular monotone-follower game.  
\begin{theorem}\label{ExistenceNashEquilibrium}
Let Assumption $\ref{H1}$ hold and assume that the following uniform coercivity condition is satisfied: there exist two constants $K, \kappa>0$ such that, for each $i=1,...,N$, 
\begin{equation}\label{coercivity_condition}
\mathcal{J}^i(A^i,A^{-i})\geq \kappa\, \mathbb{E}[ |A^i_T| ]\quad \text{for all}\quad \mathbf{A} \in \mathcal{A}^N \quad \text{with} \quad \mathbb{E}[ |A^i_T| ] \geq K.   
\end{equation} 
Suppose, moreover, that there exists a constant $M>0$ such that, for each $i=1,...,N$,	
\begin{equation}\label{boundedness_of_h,g}
\text{for all} \quad \textbf{A} \in \mathcal{A}^N \quad \text{there exists} \quad r^i(\mathbf{A})\in \mathcal{A} \quad \text{such that} \quad   \mathcal{J}^i(r^i(\mathbf{A}),A^{-i}) \leq M .
\end{equation}
Then the set of Nash equilibria $F\subset \mathcal{A}^N$ is non empty, and the partially ordered set $(F,\preccurlyeq^N)$ is a complete lattice.
\end{theorem}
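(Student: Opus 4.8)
The plan is to apply Tarski's fixed point theorem to the best-reply map $\mathbf{R}$, following the strategy outlined in the introduction. The overall structure has three ingredients that must be verified: first, that we can restrict attention to a \emph{complete} sublattice of the strategy space; second, that the best-reply maps $R^i$ have nonempty (indeed, well-behaved) values; and third, that $\mathbf{R}$ is order-preserving, so that Tarski applies.

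\textbf{Restricting to a complete lattice.} The lattice $(\mathcal{A},\preccurlyeq)$ is not complete (suprema of increasing families of strategies may blow up to $+\infty$), which is why the extended space $\mathcal{A}_\infty$ was introduced. My first step is to show, using the coercivity condition \eqref{coercivity_condition} together with \eqref{boundedness_of_h,g}, that any strategy which could possibly be a best reply must satisfy a uniform bound $\mathbb{E}[|A^i_T|]\le K\vee (M/\kappa)=:\Lambda$. Indeed, if a candidate best reply had $\mathbb{E}[|A^i_T|]\ge K$, then by \eqref{coercivity_condition} its cost is at least $\kappa\,\mathbb{E}[|A^i_T|]$, and comparing against the reference strategy $r^i(\mathbf{A})$ of cost $\le M$ forces $\mathbb{E}[|A^i_T|]\le M/\kappa$. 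Consequently I would define the bounded sublattice
\begin{equation*}
\mathcal{A}(\Lambda):=\{A\in\mathcal{A}\;|\;\mathbb{E}[|A^i_T|]\le\Lambda\},
\end{equation*}
verify it is a sublattice (using that $|{\cdot}|$ here is the sum/$\ell^1$-type quantity monotone in each component, so $|A_T\wedge B_T|+|A_T\vee B_T|=|A_T|+|B_T|$), and argue that $(\mathcal{A}(\Lambda),\preccurlyeq)^N$ is a \emph{complete} lattice: the supremum of a family is taken pointwise, lands in $\mathcal{A}_\infty$, and the uniform bound plus monotone convergence keeps it nondecreasing, c\`adl\`ag (after right-continuous modification), adapted, and with finite $\mathbb{E}[|\cdot_T|]$, hence back in $\mathcal{A}(\Lambda)$.

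\textbf{Nonemptiness of best replies.} Here I would invoke the coercivity-plus-convexity argument sketched in the introduction. Fixing $A^{-i}$ and taking a minimizing sequence for $\mathcal{J}^i(\cdot,A^{-i})$, coercivity \eqref{coercivity_condition} bounds $\mathbb{E}[|A^i_{n,T}|]$, which by Kabanov's theorem (Lemma 3.5 in \cite{K}) yields relative compactness in the Ces\`aro sense; passing to convex combinations and using lower semi-continuity and convexity of $h^i,g^i$ (Assumption \ref{H1}\eqref{H1:1}) together with lower semi-continuity of the $\int f^i\,dA^i$ term produces a minimizer in $\mathcal{A}(\Lambda)$. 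Strict convexity in Assumption \ref{H1}\eqref{H1:1} moreover gives uniqueness of the minimizer in a suitable sense, which is convenient but the key point is that $R^i(\mathbf{A})\cap\mathcal{A}(\Lambda)\ne\varnothing$. (This step presumably is carried out in detail in the body of the paper; I would cite it.)

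\textbf{Monotonicity and conclusion.} The crucial step is that $\mathbf{R}$ is order-preserving on $\mathcal{A}(\Lambda)^N$, and this is exactly where the \emph{decreasing differences} condition (Assumption \ref{H1}\eqref{H1:2}) and submodularity (Assumption \ref{H1}\eqref{H1:3}) enter. I expect this to be the main obstacle. The argument runs as follows: take $A^{-i}\preccurlyeq B^{-i}$ and let $a\in R^i(\cdot,A^{-i})$, $b\in R^i(\cdot,B^{-i})$; I must show one may choose $b\succcurlyeq a$. Using submodularity of $h^i,g^i$ in the own variable, the pair $(a\vee b, a\wedge b)$ does no worse than $(a,b)$ against the respective opponents, and decreasing differences guarantees that replacing $a$ by $a\wedge b$ against $A^{-i}$ and $b$ by $a\vee b$ against $B^{-i}$ is advantageous; combining these inequalities with optimality of $a$ and $b$ forces $a\wedge b$ to be a best reply against $A^{-i}$ and $a\vee b$ against $B^{-i}$, delivering an order-preserving selection. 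The delicate points are (i) handling the pathwise lattice operations inside the expectations and Lebesgue--Stieltjes integrals $\int f^i\,dA^i$, where one must check the submodular inequalities survive integration in $dt$, at the terminal time, and against the random cost $f^i$, and (ii) since $R^i$ is set-valued, arguing via a Veinott-type ``increasing in the strong set order'' statement so that Tarski's theorem for the correspondence applies. Once monotonicity of $\mathbf{R}:\mathcal{A}(\Lambda)^N\to\mathcal{A}(\Lambda)^N$ is established on the complete lattice, Tarski's fixed point theorem (\cite{T}) yields that the set of fixed points is a nonempty complete lattice; finiteness $\mathcal{J}^i(\bar{\mathbf{A}})<\infty$ follows from \eqref{boundedness_of_h,g}, so these fixed points are genuine Nash equilibria, and $(F,\preccurlyeq^N)$ inherits the complete-lattice structure.
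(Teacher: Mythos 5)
Your overall strategy (Tarski's theorem applied to the best-reply map, nonemptiness of best replies via Kabanov's lemma and convexity, monotonicity via submodularity and decreasing differences) is indeed the paper's strategy, and your second and third steps match the paper's Steps 1 and 2 in outline. The genuine gap is in your first step: the claim that $\mathcal{A}(\Lambda)^N$ is a complete lattice is false, so Tarski cannot be applied on it. First, $\mathcal{A}(\Lambda)$ is not even closed under pairwise maxima: if $A,B \in \mathcal{A}(\Lambda)$ have disjointly supported terminal values with $\mathbb{E}[|A_T|]=\mathbb{E}[|B_T|]=\Lambda$, then $(A\vee B)_T = A_T+B_T$ and $\mathbb{E}[|(A\vee B)_T|]=2\Lambda$; your identity $|x\wedge y|+|x\vee y|=|x|+|y|$ only shows that the \emph{sum} of the two bounds is conserved, not each bound separately. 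Second, and more seriously, the supremum of an arbitrary (even countable) family in $\mathcal{A}(\Lambda)$ need not have finite expected terminal value at all: on a space whose filtration supports independent events $E_j$ with $\mathbb{P}(E_j)=2^{-j}$, the processes $A^j_t := 2^j\,\mathbf{1}_{E_j}\mathbf{1}_{\{t\geq T/2\}}$ satisfy $\mathbb{E}[A^j_T]=1$ for every $j$, while $\mathbb{E}[\sup_j A^j_T]=\infty$. The monotone convergence theorem, which you invoke, applies to increasing sequences (or upward-directed families), not to arbitrary ones; consequently such a family has no upper bound in $\mathcal{A}(\Lambda)$ whatsoever, since any upper bound dominates the pointwise supremum.

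This is exactly the obstacle the paper's proof is built to circumvent, and it is the one idea your proposal is missing. Tarski is applied not on a bounded subset of $\mathcal{A}^N$ but on the extended lattice $\mathcal{A}_\infty^N$ of processes allowed to take the value $+\infty$, which \emph{is} complete (the paper's Step 3: essential suprema over rational times, then right-continuous extension). The best-reply map, which maps into $\mathcal{A}(w)$ by Step 1, is then extended to an increasing map $\bar{R}^i:\mathcal{A}_\infty^N \rightarrow \mathcal{A}(w)$ by setting $\bar{R}^i(\mathbf{A})$ equal to the least upper bound of $\{R^i(\mathbf{V}) \,|\, \mathbf{V} \in \mathcal{A}^N,\ \mathbf{V} \preccurlyeq^N \mathbf{A}\}$ (Step 4). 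The reason the bound $w$ survives this supremum --- whereas it fails for arbitrary families, as above --- is that this particular family is upward directed: by monotonicity of $R^i$ one has $R^i(\mathbf{V})\vee R^i(\mathbf{V}') \preccurlyeq R^i(\mathbf{V}\vee\mathbf{V}')$, so the family is closed under maxima, its supremum is the limit of an increasing sequence, and only there does monotone convergence legitimately yield $\bar{R}^i(\mathbf{A}) \in \mathcal{A}(w)$. Tarski then applies to $\bar{\mathbf{R}}$ on $\mathcal{A}_\infty^N$, and the fixed points of $\bar{\mathbf{R}}$ coincide with those of $\mathbf{R}$ because the image of $\bar{\mathbf{R}}$ lies in $\mathcal{A}(w)^N$, where the two maps agree. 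A final, minor remark: your appeal to a Veinott-type strong-set-order version of Tarski for correspondences is unnecessary (and would itself require completeness of the underlying lattice): since the costs are \emph{strictly} convex, each best reply is single-valued, and the paper's monotonicity argument uses this uniqueness explicitly to conclude $R^i(\mathbf{A})\vee R^i(\bar{\mathbf{A}})=R^i(\bar{\mathbf{A}})$.
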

\begin{proof}
Our aim is to prove existence of a Nash equilibrium by applying Tarski's fixed point theorem (see Theorem 1 in \cite{T}) to the map $\mathbf{R}$ (cf.\ (\ref{brm})). For this, the assumption on the submodularity of $h^i$ and $g^i$ will play a crucial role. 

First of all, recalling $k,$ $K$ and $M$ from (\ref{coercivity_condition}) and (\ref{boundedness_of_h,g}), define the constant $ w:=\frac{2M}{\kappa} \lor K$, and introduce the set of restricted admissible strategies 
\begin{equation}\label{def.A(w)}
\mathcal{A}(w) := \{ A \in \mathcal{A} \, | \, \mathbb{E}[A_T^l] \leq w,\,\forall\, l=1,...,d\, \},
\end{equation} 
and the set of restricted profile strategies as $\mathcal{A}(w)^N := \bigotimes_{i=1}^N \mathcal{A}(w)$. 
In the following steps we will identify the proper framework allowing us to apply Tarski's fixed point theorem.

\bigbreak \noindent
\textit{(Step 1) The best-reply maps $R^i:\mathcal{A}^N \rightarrow \mathcal{A}(w) $ are well defined.}
\smallbreak \noindent 
Fix $i$ and take $\mathbf{A} \in \mathcal{A}^N$. We have to prove that there exists a unique $B \in \mathcal{A}$ such that 
$$
\mathcal{J}^i(B,A^{-i}) = \min\limits_{V \in \mathcal{A}} \mathcal{J}^i({V},A^{-i}),
$$
and, moreover, that $B \in \mathcal{A}(w)$. Clearly, by (\ref{bestreplymap}), we have ${B}=\{ {R}^i(\mathbf{A})_t \}_{t \in [0,T]}$.

Let $\{ V^j \}_{j \in \mathbb{N} } \subset \mathcal{A}$ be a minimizing sequence for the functional  $\mathcal{J}^i(\cdot,A^{-i})$.
Thanks to the coercivity conditions (\ref{coercivity_condition}) on the costs, we deduce that 
$$
\sup_{j \in \mathbb{N}} \mathbb{E}[| {V}_T^j|  ] < \infty.
$$
We can then use (a minimal adjustment of) Lemma 3.5 in \cite{K}, to find a c\`adl\`ag, nondecreasing, nonnegative, $\bar{\mathbb{F}}_+^{f,L}$-adapted process $B$, and a subsequence of $\{{V}^{j} \}_{j \in \mathbb{N} }$ (not relabeled) such that, $\mathbb{P}$-a.s., 
\begin{equation}\label{Komlos_limit}
\lim_m \int_{[0,T]} \varphi_t \, dB_t^m=\int_{[0,T]} \varphi_t \, dB_t  \quad \forall \, \varphi \in \mathcal{C}_b([0,T];\mathbb{R}^d) \quad \text{ and} \quad \lim_m B_T^m=B_T,
\end{equation}
where  we set, $\mathbb{P}$-a.s.
\begin{equation}
B_t^m := \frac{1}{m}	\sum_{j=1}^m {V}_t^j, \quad\quad \forall	t \in [0,T].
\end{equation}
Moreover, from the limit in (\ref{Komlos_limit}) we have that there exists a $\mathbb{P}$-null set $\mathcal{N}$ such that, for each $\omega \in \Omega \setminus \mathcal{N}$ there exists a subset $\mathcal{I}(\omega)\subset [0,T)$ of null Lebesgue measure, such that
\begin{equation*}
\lim_m B_t^m(\omega)=B_t(\omega) \quad \text{for each }\quad \omega \in \Omega \setminus \mathcal{N} \quad \text{and} \quad t \in [0,T]\setminus \mathcal{I}(\omega).
\end{equation*}
The latter convergence allows us to invoke Fatou's lemma which, together with the limit in (\ref{Komlos_limit}) and thanks to the lower semi-continuity of the costs, allows us to conclude that 
\begin{align*}
\mathcal{J}^i(B,A^{-i}) &\leq \liminf_m  \mathcal{J}^i(B^m,A^{-i}) \leq  \liminf_m \frac{1}{m} \sum_{j=1}^m \mathcal{J}^i(V^j,A^{-i}) = \min\limits_{V \in \mathcal{A}} \mathcal{J}^i({V},A^{-i}), 
\end{align*}
where we have used the convexity of $h^i$ and $g^i$ and the minimizing property of $V^j$. 
Hence $B$ is a minimizer for $\mathcal{J}^i( \cdot , A^{-i})$; in fact,  $B$ is the unique minimizer of $\mathcal{J}^i( \cdot , A^{-i})$ by strict convexity of the costs. 

It remains to prove that $B \in \mathcal{A}(w)$, and to accomplish that we argue by contradiction. 
If there exists $l\in\{1,...,d\}$ such that $\mathbb{E}[B_T^l] \geq w=\frac{2M}{\kappa} \lor K$, then we have $\mathbb{E}[ |B_T|] \geq \frac{2M}{\kappa} \lor K$ and hence, by the coercivity condition (\ref{coercivity_condition}) together with (\ref{boundedness_of_h,g}), we deduce that
$$
\mathcal{J}^i(B,A^{-i}) \geq \kappa \, \mathbb{E}[|B_T|] \geq 2M > \mathcal{J}^i(r^i(\mathbf{A}),A^{-i}), 
$$
which contradicts the optimality of $B$. 

\bigbreak \noindent
\textit{(Step 2) The best-reply maps $R^i$  are increasing, i.e.\  if $\mathbf{A}, \bar{\mathbf{A}} \in \mathcal{A}^N$ are such that $\mathbf{A} \preccurlyeq^N  \bar{\mathbf{A}}$, then $R^i(\mathbf{A}) \preccurlyeq R^i(\bar{\mathbf{A}})$.}
\smallbreak \noindent 
First of all, observe that, by an integration by parts (see, e.g., Corollary 2 at p.\ 68 in \cite{Protter05}), the cost functional rewrites as 
\begin{align}\label{integration-by-parts}
\mathcal{J}^i(A^i,A^{-i}) =\mathbb{E}  \bigg[ \int_0^T    h^i ( L_t,\mathbf{A}_t ) \, dt &+ g^i \left( L_T,\mathbf{A}_T \right)   -\int_0^T A_{t-}^i \, df_t^i + f_T^i {A}_T^i \bigg],
\end{align}
where $A_{t-}^i$ denotes the left-limit of $A_t^i$.
Thanks to the optimality of $R^i(\mathbf{A})$ we have the inequality
\begin{equation}\label{defofR}
\mathcal{J}^i(R^i(\bar{\mathbf{A}}) \land  R^i(\mathbf{A}) , A^{-i})-\mathcal{J}^i( R^i(\mathbf{A}), \mathbf{A}^{-i}) \geq 0,
\end{equation}
which by (\ref{integration-by-parts}) and setting $R^i:= R^i(\mathbf{A})$ and $\bar{R}^i:=R^i(\bar{\mathbf{A}})$, can be rewritten as 
\begin{align*}
\mathbb{E}  \left[ \int_0^T   \left( h^i(L_t, R_t^i \land \bar{R}_t^i,A_t^{-i})-h^i(L_t, R_t^i , A_t^{-i} ) \right) dt \right] -\mathbb{E} &\left[ \int_0^T  (R_{t-}^i \land \bar{R}_{t-}^i - {R}_{t-}^i)\,  df_t^i \right] \\
+ \mathbb{E}  \left[ g^i(L_T, R_T^i \land \bar{R}_T^i,A_T^{-i})-g^i(L_T, R_T^i , A_T^{-i} )  \right] &+\mathbb{E} \left[ f_T^i (R_T^i \land \bar{R}_T^i - {R}_T^i) \right] \geq 0,
\end{align*}
By the submodularity Condition \ref{H1:3} in Assumption \ref{H1}, we have
\begin{align}\label{submod.J.one}
\mathbb{E} & \left[ \int_0^T   \left( h^i(L_t, R_t^i \land \bar{R}_t^i,A_t^{-i})-h^i(L_t, R_t^i , A_t^{-i} ) \right)  dt \right] \\ \notag
& \leq \mathbb{E}  \left[ \int_0^T   \left( h^i(L_t,  \bar{R}_t^i, A_t^{-i})-h^i(L_t, R_t^i \lor \bar{R}_t^i , A_t^{-i} ) \right) dt \right],
\end{align}
and
\begin{align}\label{submod.J.two}
 \mathbb{E} & \left[ g^i(L_T, R_T^i \land \bar{R}_T^i,A_T^{-i})-g^i(L_T, R_T^i , A_T^{-i} )  \right] \\ \notag
& \leq \mathbb{E}  \left[ g^i(L_T,  \bar{R}_T^i,A_T^{-i})-g^i(L_T, R_T^i \lor \bar{R}_T^i , A_T^{-i} )  \right].
\end{align}
Moreover, one can easily verify that
\begin{equation}\label{submod.J.three}
\mathbb{E} \left[ \int_0^T  (R_{t-}^i \land \bar{R}_{t-}^i - {R}_{t-}^i)\, df_t^i \right] = \mathbb{E} \left[ \int_0^T  (\bar{R}_{t-}^i - R_{t-}^i \lor \bar{R}_{t-}^i )\, df_t^i \right]
\end{equation}
and
\begin{equation}\label{submod.J.four}
 \mathbb{E} \left[ f_T^i (R_T^i \land \bar{R}_T^i - {R}_T^i) \right] = \mathbb{E} \left[ f_T^i (\bar{R}_T^i  - R_T^i \lor \bar{R}_T^i) \right].
\end{equation}
Using (\ref{submod.J.one})-(\ref{submod.J.four}) we obtain 
\begin{align*}
& \mathcal{J}^i(R^i(\bar{\mathbf{A}}) \land  R^i(\mathbf{A}) , A^{-i})-\mathcal{J}^i( R^i(\mathbf{A}) , A^{-i}) \leq \mathcal{J}^i(R^i(\bar{\mathbf{A}}), A^{-i})-\mathcal{J}^i( R^i(\mathbf{A}) \vee R^i(\bar{\mathbf{A}}), A^{-i}),
\end{align*}
so that, by (\ref{defofR}), we deduce that
\begin{equation}\label{maggiorezero}
\mathcal{J}^i(R^i(\bar{\mathbf{A}}), A^{-i})-\mathcal{J}^i( R^i(\mathbf{A}) \vee R^i(\bar{\mathbf{A}}), A^{-i}) \geq 0.
\end{equation}
Now, by Condition \ref{H1:2} in Assumption \ref{H1}, we have
\begin{align*}
&\mathcal{J}^i(R^i(\bar{\mathbf{A}}), \bar{A}^{-i})-\mathcal{J}^i( R^i(\mathbf{A}) \vee R^i(\bar{\mathbf{A}}), \bar{A}^{-i})\geq \mathcal{J}^i(R^i(\bar{\mathbf{A}}), A^{-i})-\mathcal{J}^i( R^i(\mathbf{A}) \vee R^i(\bar{\mathbf{A}}), A^{-i}),
\end{align*}
and finally, by (\ref{maggiorezero}), we conclude that
\begin{align*}
\mathcal{J}^i(R^i(\bar{\mathbf{A}}), \bar{A}^{-i})-\mathcal{J}^i( R^i(\mathbf{A}) \vee R^i(\bar{\mathbf{A}}), \bar{A}^{-i}) \geq 0. 
\end{align*}
Hence $ R^i(\mathbf{A}) \vee R^i(\bar{\mathbf{A}}) $ minimizes $\mathcal{J}^i(\cdot, \bar{A}^{-i})$ as well as $R^i(\bar{\mathbf{A}})$ and, by uniqueness, it must be $ R^i(\mathbf{A}) \vee R^i(\bar{\mathbf{A}}) =R^i(\bar{\mathbf{A}})$. That is $R^i(\bar{\mathbf{A}}) \preccurlyeq R^i(\mathbf{A})$, which shows the claimed monotonicity.
\bigbreak
\noindent
\textit{(Step 3) The lattices $(\mathcal{A}_\infty^N,  \preccurlyeq^N )$ and $(\mathcal{A}_\infty, \preccurlyeq )$ are complete.} 
\smallbreak
\noindent
We prove the claim only for the lattice $(\mathcal{A}_\infty^N, \preccurlyeq^N )$, since an analogous rationale applies to show that the lattice  $(\mathcal{A}_\infty, \preccurlyeq )$ is complete.

To prove that the lattice $(\mathcal{A}_\infty^N, \preccurlyeq^N )$ is complete we have to show that each subset of $\mathcal{A}_\infty^N$ has a least upper bound and a greatest lower bound. We now prove only the existence of a least upper bound, since the existence of a greatest lower bound follows by similar arguments.

Consider a subset $\{ \mathbf{A}^j \}_{j \in \mathcal{I}} $ of $\mathcal{A}_\infty^N$, where $\mathcal{I}$ is a set of indexes. 
Define $Q:= ([0,T] \cap \mathbb{Q}) \cup \{ T \}$. For each $q \in Q$ we set 
\begin{equation}\label{def_of_the_sup_over_Q}
\tilde{ \mathbf{S} }_q := \esssup_{j\in\mathcal{I}} \mathbf{A}_q^j, 
\end{equation}
and we recall that there exists a countable subset $\mathcal{I}_q$ of $\mathcal{I}$ such that
\begin{equation}\label{sup_count}
\tilde{ \mathbf{S} }_q = \sup_{j \in \mathcal{I}_q } \mathbf{A}_q^j.
\end{equation} 
Define next the right-continuous process $\mathbf{S}:\Omega \times [0,T] \rightarrow [0,\infty]^{Nd}$ by 
\begin{equation}\label{least_upper_bound}
\mathbf{S}_T:= \tilde{\mathbf{S}}_T, \quad \text{and} \quad \mathbf{S}_t:= \inf\{ \, \tilde{\mathbf{S}}_q \,| \,   q > t, q \in Q \}, \quad \text{for} \quad t<T.
\end{equation}
Observe that $\mathbf{S}$ is  $\bar{\mathbb{F}}_+^{f,L}$-adapted by right-continuity of the filtration. Hence, $\mathbf{S}$ lies in $\mathcal{A}_\infty^N$, and clearly $\mathbf{A}^j \preccurlyeq^N \mathbf{S}$ for each $j\in \mathcal{I}$. 

Consider next an element $\mathbf{B}$ of $\mathcal{A}_\infty^N$ such that $\mathbf{A}^j \preccurlyeq^N \mathbf{B}$ for each $j\in \mathcal{I}$. For $q\in Q$ and $j \in \mathcal{I}_q$  there exists a $\mathbb{P}$-null set $\mathcal{M}_q^j$ such that $\mathbf{A}_q^j(\omega) \leq \mathbf{B}_q (\omega) $ for all $\omega \in \Omega \setminus \mathcal{M}_q^j$. Defining then $\mathcal{M}_q := \bigcup_{j \in \mathcal{I}_q } \mathcal{M}_q^j$, we have $\mathbf{A}_q^j(\omega) \leq \mathbf{B}_q (\omega) $ for all $\omega \in \Omega \setminus \mathcal{M}_q$ and $j\in \mathcal{I}_q$, which, by  (\ref{sup_count}), implies that $\tilde{\mathbf{S}}_q(\omega) \leq \mathbf{B}_q (\omega) $ for all $\omega \in \Omega \setminus \mathcal{M}_q$. Finally, introducing the $\mathbb{P}$-null set $\mathcal{M}:=\bigcup_{q \in Q } \mathcal{M}_q$, we have $\tilde{\mathbf{S}}_q(\omega) \leq \mathbf{B}_q (\omega) $ for all $\omega \in \Omega \setminus \mathcal{M}$ and $q \in Q$, and, by right-continuity, we deduce that $\mathbf{S} \preccurlyeq^N \mathbf{B}$. Thus, $\mathbf{S}$ is the least upper bound of $\{ \mathbf{A}^j \}_{j \in \mathcal{I}}$. 
\bigbreak\noindent
\textit{(Step 4) There exist increasing maps $\bar{R}^i: \mathcal{A}_\infty^N \rightarrow \mathcal{A}(w)$ such that $\bar{R}^i(\mathbf{A})= R^i(\mathbf{A})$ for each $\mathbf{A} \in \mathcal{A}^N$.} 
\smallbreak \noindent
For each $\mathbf{A} \in \mathcal{A}_\infty^N $, define $\bar{R}^i(\mathbf{A})$ as the least upper bound of the set $\{ R^i(\mathbf{V}) \, | \, \mathbf{V} \in \mathcal{A}^N , \mathbf{V} \preccurlyeq^N \mathbf{A} \}$ in the complete lattice $(\mathcal{A}_\infty, \preccurlyeq )$.
If $\mathbf{A} \in \mathcal{A}^N$, then $R^i(\mathbf{A}) \in \{ R^i(\mathbf{V}) \, | \, \mathbf{V} \in \mathcal{A}^N , \mathbf{V} \preccurlyeq^N \mathbf{A} \}$ and, since $R^i$ is increasing, $R^i(\mathbf{V}) \preccurlyeq R^i(\mathbf{A})$ for each $\mathbf{V} \in \mathcal{A}^N$ such that $\mathbf{V} \preccurlyeq^N \mathbf{A}$, which implies that $\bar{R}^i(\mathbf{A}) = R^i(\mathbf{A})$. 
Moreover, if $\mathbf{A}, \, \mathbf{B} \in \mathcal{A}_\infty^N$ are such that $\mathbf{A} \preccurlyeq^N \mathbf{B}$, then we have $\{ \mathbf{V} \in \mathcal{A}^N , \mathbf{V} \preccurlyeq^N \mathbf{A} \} \subset \{ \mathbf{V} \in \mathcal{A}^N , \mathbf{V} \preccurlyeq^N \mathbf{B} \}$ and hence  that $\bar{R}^i(\mathbf{A}) \preccurlyeq \bar{R}^i(\mathbf{B})$. It only remains to prove that $\bar{R}^i(\mathbf{A}) \in \mathcal{A}(w)$. In order to accomplish that, we observe that,  for each $\mathbf{V}, \, \mathbf{V}' \in \mathcal{A}^N$ such that $ \mathbf{V}, \, \mathbf{V}' \preccurlyeq^N \mathbf{A} $ we have that $\mathbf{V} \lor \mathbf{V}' \preccurlyeq^N \mathbf{A}$ and, since $R^i$ is increasing, $ R^i(\mathbf{V}) \lor R^i( \mathbf{V}')  \preccurlyeq R^i(\mathbf{V} \lor \mathbf{V}')$; that is, the set $\{ R^i(\mathbf{V}) \, | \, \mathbf{V} \in \mathcal{A}^N , \mathbf{V} \preccurlyeq^N \mathbf{A} \}$ is closed under taking maxima. This implies that there exists a sequence $\{ \mathbf{V}^j \}_{j \in \mathbb{N}} \subset \{ \mathbf{V} \in \mathcal{A}^N , \mathbf{V} \preccurlyeq^N \mathbf{A} \}  $ such that the sequence $\{ R^i(\mathbf{V}^j)_T \}_{j \in \mathbb{N}}$ is increasing and, moreover, 
\begin{equation}
\label{R.in.A(w)}
\bar{R}^i(\mathbf{A})_T = \lim_{j}R^i(\mathbf{V}^j)_T, \quad \mathbb{P}\text{-a.s.}, \quad \text{and} \quad \mathbb{E}[ \bar{R}^i(\mathbf{A})_T] = \lim_{j} \mathbb{E}[ R^i(\mathbf{V}^j)_T],  
\end{equation}
where the latter equality is due to the monotone convergence theorem.
Finally, by \emph{Step 1} we have that $R^i(\mathbf{V}^j) \in \mathcal{A}(w) $ for each $j\in \mathbb{N}$, which, by \eqref{R.in.A(w)}, implies that $\bar{R}^i(\mathbf{A}) \in \mathcal{A}(w).$

\bigbreak\noindent
\textit{(Step 5) Existence of Nash equilibria.}
\smallbreak \noindent
By the previous steps the lattice $(\mathcal{A}_\infty^N, \preccurlyeq^N ) $ is complete and the map $\bar{\mathbf{R}}:=(\bar{R}^1,...,\bar{R}^N)$ from the set of extended profile strategies $\mathcal{A}_\infty^N$ into itself is monotone increasing. Then, by Tarski's fixed point theorem (see \cite{T}, Theorem 1), the set of fixed point of the map $\bar{\mathbf{R}}$ is a non empty complete lattice. Now, by \emph{Step 4}, the image of the map $\bar{\mathbf{R}}$ is contained in $\mathcal{A}(w)^N$, and the map $\bar{\mathbf{R}}$ coincides with the map  $\mathbf{R}$ on $\mathcal{A}(w)^N$. This implies that the set of fixed points of $\mathbf{R}$ is equal to the set of fixed point of $\bar{\mathbf{R}}$, and  since such a set coincides with the set of Nash equilibria, the proof is completed.
\end{proof}

\subsection{Some Remarks} In this subsection we collect some remarks concerning  assumptions and  extensions of the previous theorem.
\begin{remark}[Comments on the Conditions of Theorem \ref{ExistenceNashEquilibrium}]\label{remark_on_conditions} A few comments are worth being done.
\begin{enumerate}
\item Condition (\ref{coercivity_condition}) is satisfied if, for example, there exists a constant $c>0$ such that 
\begin{equation*}
\mathbb{P} \left[ f_t^{i} \geq c, \, \forall i=1,...,N, \,\forall \, t \in [0,T] \right]=1,
\end{equation*}
or if $g^i$ are such that $g^i(l,a^i,a^{-i}) \geq \kappa \, | a^i|$. 
\item The role of Condition (\ref{boundedness_of_h,g}) is to force Nash equilibria, whenever they exist, to live in the bounded subset $\mathcal{A}^N(w)$ of $\mathcal{A}^N$. If there exist  measurable functions $H,G:\mathbb{R}^k \rightarrow [0,\infty)$ such that, for each $i=1,...,N$ and for each $(l,a^{-i}) \in \mathbb{R}^k\times\mathbb{R}^{(N-1)d}$, we have $h^i(l,0,a^{-i}) \leq H(l)$ and $g^i(l,0,a^{-i}) \leq G(l)$, with 
\begin{equation*}
\mathbb{E} \left[ \int_0^T  H(L_s) \, ds + G(L_T)\,\right] < \infty,
\end{equation*}
then  Condition (\ref{boundedness_of_h,g}) is satisfies with $r^i(\mathbf{A})=0$.
\end{enumerate}
\end{remark}

\begin{remark}\label{necessity.of.coercivity} Consider the case $N=2,\, d=1$. The costs relative to Player 1 are $f^1=h^1=0,\, g^1(l,a^1,a^2)=e^{-a^1}(2-e^{-a^2})$, while the costs of Player 2 can be generic functions satisfying our requirements.  Then, all the assumptions of Theorem \ref{ExistenceNashEquilibrium} are satisfied, with the exception of the coercivity condition (\ref{coercivity_condition}), which is not satisfied by $\mathcal{J}^1$. If now $(\hat{A}^1,\hat{A}^2)$ were a Nash equilibrium, then for the first player we could write 
$$
0<\mathbb{E}[e^{-\hat{A}_T^1}(2-e^{-\hat{A}_T^2}) ] \leq \inf_{n \in \mathbb{N}} \mathbb{E}[e^{-n}(2-e^{-\hat{A}_T^2}) ] =0,
$$
which is clearly a contradiction. This example shows that, at least in the Nash equilibria, the coercivity condition (\ref{coercivity_condition}) is necessarily satisfied.
\end{remark}

\begin{remark}[Finite-Fuel Constraint]\label{FiniteFuelConstraint}
Many models in the literature on monotone-follower problems  enjoy a so-called \emph{finite fuel constraint} (see e.g.\ \cite{Karatzas85} for a seminal paper, and the more recent \cite{Bank05} and \cite{Chiarolla_Ferrari_Riedel}). This can be realized by requiring that the admissible control strategies stay bounded either $\mathbb{P}$-a.s.\ or in expectation. In our game, if we suppose that, for each $i=1,...,N$, the strategies of player $i$ belongs to the set 
$
\mathcal{A}(w^i):= \{ A \in \mathcal{A} \, | \, \mathbb{E}[A_T^l] \leq w^i, \, \forall \, l=1,...,d \, \},$
a proof similar to that of Theorem \ref{ExistenceNashEquilibrium} still shows existence of Nash equilibria without need of Conditions \ref{coercivity_condition} and \ref{boundedness_of_h,g}. 
\end{remark}
\begin{remark}[An Extension of Theorem \ref{ExistenceNashEquilibrium} with Regular-Singular Controls]\label{existenceRegularSingular} We here discuss how to extend Theorem \ref{ExistenceNashEquilibrium} to a game in which players can choose both a regular and a singular control.

Fix a square integrable random variable $\Theta$ and define the space of \emph{regular controls}
$ \mathcal{U}$ as the set of $\R^d$-valued  $\bar{\mathbb{F}}_+^{f,L}$-progressively measurable processes $u$ such that $|u_t| \leq \Theta \ \mathbb{P}\otimes dt-\text{a.e.}$ 
We consider the game of regular-singular controls, in which each player $i \in \{ 1,...,N \}$ is allowed to choose an admissible strategy $X^i=(u^i,A^i) \in \mathcal{U} \times \mathcal{A}$ in order to minimize the cost functional 
$$
\mathcal{J}^i(X^i,X^{-i} ):= \mathbb{E} \bigg[ \int_0^T h^i(L_t, {X}_t^1,...,X_t^N)\, dt +g^i(L_T,\textbf{A}_T) + \int_{[0,T]} f_t^i \, dA_t^i   \bigg].
$$
Define on $\mathcal{U}$ the order relation $\preccurlyeq$ by setting, for $u,v \in \mathcal{U}$, $u \preccurlyeq v$  if and only if $ u_t \leq v_t \ \mathbb{P}\otimes dt$-a.e. Next, consider on the lattice $(\mathcal{U},\preccurlyeq)$ the topology $\mathcal{I}$ of intervals (see, e.g., p.\ 250 in \cite{B}); that is, the topology for which the topology of closed sets is generated by the family of sets $\mathcal{I}_z:=\{ u \in \mathcal{U} \,:\, u \preccurlyeq z  \}$ and $\mathcal{I}^z:=\{ u\in \mathcal{U} \,:\, z \preccurlyeq u  \}$ for $z \in \mathcal{U}$. Since the topology $\mathcal{I}$ is included in the weak topology of $\mathbb{L}^2(\Omega\times [0,T];\mathbb{R}^{d})$ and $\mathcal{U}$ is bounded, then $\mathcal{U}$ is compact in the topology $\mathcal{I}$. Therefore, by a characterization of complete lattices (see Theorem 20 at p.\ 250 in \cite{B}), it follows that the lattice $(\mathcal{U},\preccurlyeq)$ is complete. Then, existence of Nash equilibria follows proceding as in the proof of Theorem \ref{ExistenceNashEquilibrium}.
\end{remark}

\end{section}


\begin{section}{The $n$-Lipschitz Game}
\label{sec:nLipGame}

In the notation of Section \ref{Section.Games.of.Monotone.Follower.Type}, for each $n \in \mathbb{N}$, define the space of $n$-\emph{Lipschitz strategies}
\begin{equation*}
\mathcal{L}(n) = \left\{ A \in \mathcal{A} \, | \, A  \text{ is Lipschitz with Lipschitz constant smaller that }  n \text{ and } A_0=0 \right\},
\end{equation*}
and the space of $n$-\emph{Lipschitz  profile strategies}  as $\mathcal{L}^N(n)  := \bigotimes_{i=1}^N \mathcal{L}(n)$. The set $\mathcal{L}(n)$ (resp.\ $\mathcal{L}^N(n)$) inherits from $\mathcal{A}$ (resp.\ $\mathcal{A}^N$) the order relation $\preccurlyeq$ (resp.\ $\preccurlyeq^N$) together with the associated lattice structure.

For each $n \in \mathbb{N}$, the set of $n$-Lipschitz profile strategies  $\mathcal{L}^N(n)$, together with the cost functionals $\mathcal{J}^i$, define a game to which we will refer to as the \emph{$n$-Lipschitz game}. We say that an $n$-Lipschitz profile strategy $\mathbf{A} \in \mathcal{L}^N(n)$ is a Nash equilibrium of the $n$-Lipschitz game if, for each $i=1,...,N$, we have $\mathcal{J}^i(\mathbf{A}) < \infty$ and
$$
\mathcal{J}^i({A}^{i},{A}^{-i} ) \leq \mathcal{J}^i( V^i , {A}^{-i} ), \quad \text{for every} \quad V^i \in \mathcal{L}(n).
$$ 

\begin{theorem}[Existence of Nash Equilibria for the Submodular $n$-Lipschitz Game]\label{NashLipschitz}
Let Assumption \ref{H1} hold. Then, for each $n \in \mathbb{N}$, the set of Nash equilibria of the $n$-Lipschitz game $F\subset \mathcal{L}^N(n)$ is non empty, and the partially ordered set $(F,\preccurlyeq^N)$ is a complete lattice.
\end{theorem}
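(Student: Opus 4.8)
The plan is to mirror the structure of the proof of Theorem \ref{ExistenceNashEquilibrium}, applying Tarski's fixed point theorem to the best-reply maps, but with one crucial simplification: the lattice $(\mathcal{L}^N(n), \preccurlyeq^N)$ is \emph{already complete}, so the auxiliary construction of extended strategies and the extension maps $\bar{R}^i$ (Steps 3 and 4 of the previous proof) will not be needed here. Thus I would organize the argument into essentially two substantive steps, plus the invocation of Tarski.

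First I would verify that the best-reply maps $R^i:\mathcal{L}^N(n)\rightarrow \mathcal{L}(n)$ are well defined, i.e.\ that $\min_{V\in\mathcal{L}(n)}\mathcal{J}^i(V,A^{-i})$ is attained by a unique minimizer. Here, as the introduction anticipates, I would use the \emph{direct method} rather than Kabanov's lemma: since every $V\in\mathcal{L}(n)$ is $n$-Lipschitz with $V_0=0$, one has $V_t=\int_0^t \dot V_s\,ds$ with $|\dot V_s|\le n$, so the time-derivatives of any minimizing sequence $\{V^j\}$ are uniformly bounded in $\mathbb{L}^2(\Omega\times[0,T];\mathbb{R}^d)$. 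By the Banach-Saks theorem, a sequence of Ces\`aro means of (a subsequence of) $\{\dot V^j\}$ converges strongly in $\mathbb{L}^2$ to some $\dot B$; integrating, the corresponding convex combinations $B^m_t=\int_0^t(\tfrac1m\sum_j\dot V^j_s)\,ds$ converge to $B_t:=\int_0^t\dot B_s\,ds$, which again lies in $\mathcal{L}(n)$ because the $n$-Lipschitz constraint $|\dot B_s|\le n$ is preserved under convex combination and $\mathbb{L}^2$-limits (being a closed convex constraint). Lower semi-continuity and convexity of $h^i,g^i$ then give $\mathcal{J}^i(B,A^{-i})\le\liminf_m\mathcal{J}^i(B^m,A^{-i})\le\liminf_m\tfrac1m\sum_j\mathcal{J}^i(V^j,A^{-i})=\min$, exactly as in Step 1 of the previous proof; strict convexity yields uniqueness. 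Notice no coercivity assumption is needed because $\mathcal{L}(n)$ is itself a bounded set, which is why conditions \eqref{coercivity_condition} and \eqref{boundedness_of_h,g} are absent from the hypotheses.

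Second, I would show the best-reply maps are increasing: if $\mathbf{A}\preccurlyeq^N\bar{\mathbf{A}}$ then $R^i(\mathbf{A})\preccurlyeq R^i(\bar{\mathbf{A}})$. This step can be copied essentially verbatim from Step 2 of the proof of Theorem \ref{ExistenceNashEquilibrium}: the integration-by-parts rewriting \eqref{integration-by-parts} of $\mathcal{J}^i$, the submodularity inequalities \eqref{submod.J.one}--\eqref{submod.J.four}, and the decreasing-differences Condition \ref{H1:2} all go through unchanged, the only difference being that the test strategies $R^i(\bar{\mathbf{A}})\land R^i(\mathbf{A})$ and $R^i(\bar{\mathbf{A}})\lor R^i(\mathbf{A})$ now must be checked to remain in $\mathcal{L}(n)$ — but $\mathcal{L}(n)$ is closed under $\land$ and $\lor$ (the pointwise min/max of two $n$-Lipschitz, nondecreasing processes vanishing at $0$ is again such a process), so the argument is legitimate.

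Finally, the completeness of $(\mathcal{L}^N(n),\preccurlyeq^N)$ must be established so that Tarski applies. I would argue this directly: given a family $\{\mathbf{A}^j\}_{j\in\mathcal{I}}\subset\mathcal{L}^N(n)$, the essential-supremum construction \eqref{def_of_the_sup_over_Q}--\eqref{least_upper_bound} from Step 3 produces a right-continuous, adapted, nondecreasing least upper bound $\mathbf{S}$; the extra point to check is that the $n$-Lipschitz property and $\mathbf{S}_0=0$ survive the sup-and-right-continuous-regularization, which holds because a pointwise supremum of a countable family of $n$-Lipschitz functions is $n$-Lipschitz. The greatest lower bound is handled symmetrically. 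I expect this completeness verification to be the one genuinely new obstacle relative to the earlier theorem — it is the reason the extended-strategy detour could be dropped — but it is a short and robust argument. With completeness of the lattice and monotonicity of $\mathbf{R}=(R^1,\dots,R^N)$ in hand, Tarski's fixed point theorem (Theorem 1 in \cite{T}) yields that the set of fixed points, which coincides with the set of Nash equilibria, is a non-empty complete lattice, completing the proof.
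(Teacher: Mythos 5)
Your proposal is correct and takes essentially the same route as the paper: the paper's proof likewise establishes completeness of $(\mathcal{L}^N(n),\preccurlyeq^N)$ by observing that least upper bounds (and greatest lower bounds) remain $n$-Lipschitz, obtains the unique best reply in $\mathcal{L}(n)$ by citing Proposition 26 of \cite{LZ} (whose argument is precisely the Banach--Saks/direct-method one you spell out, as anticipated in the paper's introduction), reuses \emph{Step 2} of the proof of Theorem \ref{ExistenceNashEquilibrium} for monotonicity of the best-reply maps, and concludes via Tarski's fixed point theorem. The only difference is expository: you fill in the details (closure of $\mathcal{L}(n)$ under $\land$, $\lor$, and under the sup-regularization; preservation of the Lipschitz constraint under Ces\`aro means and $\mathbb{L}^2$-limits) that the paper delegates to the citation and to the earlier proof.
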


\begin{proof} As in the proof of Theorem \ref{ExistenceNashEquilibrium}, we identify the proper framework in order to apply Tarski's fixed point theorem. 
The completeness of the lattice $(\mathcal{L}^N(n),\preccurlyeq^N)$  follows by observing that the least upper bound (as well as the greatest lower bound) of any subset is still Lipschitz with Lipschitz constant bounded by $n$. 
Moreover, as in the proof of Proposition 26 at p.\ 109 in \cite{LZ}, we deduce that, 
for each  $i=1,...,N$ and each $\mathbf{A}\in \mathcal{L}^N(n)$, there exists a unique (by strict convexity of the costs) $R^i(\mathbf{A})\in \mathcal{L}(n)$ such that $$\mathcal{J}^i(R^i(\mathbf{A}),A^{-i})=\min_{V\in \mathcal{L}(n)} \mathcal{J}^i(V,A^{-i}).$$ 
By employing arguments as those in the \emph{Step 2} of the proof of Theorem \ref{ExistenceNashEquilibrium} we conclude that the map $\mathbf{R}=(R^1,...,R^N):\mathcal{L}^N(n) \rightarrow \mathcal{L}^N(n)$ is monotone increasing in the complete lattice $(\mathcal{L}^N(n), \preccurlyeq^N )$. Then, the thesis of the theorem follows from Tarski's fixed point theorem.
\end{proof}
\end{section}


\begin{section}{Existence and Approximation of Weak Nash Equilibria in the Submodular Monotone-Follower Game}
\label{Section.Existence.Approximation.Weak.Nash}

In this section we will investigate connections between the monotone-follower game and the $n$-Lipschitz games.


\subsection{Weak Formulation of the Monotone-Follower Game.}

For $T \in (0,\infty)$ and an arbitrary $m\in \mathbb{N}$, we introduce the following measurable spaces:
\begin{itemize}
 \item $\mathcal{C}_{+}^m$ denotes the set of $\mathbb{R}^m$-valued continuous function on $[0,T]$ with nonnegative components, endowed with the Borel $\sigma$-algebra generated by the uniform convergence norm;
 \item $\mathcal{D}^m$ denotes the Skorokhod space of $\mathbb{R}^m$-valued c\`adl\`ag functions, defined on $[0,T]$, endowed with the Borel $\sigma$-algebra generated by the Skorokhod topology;
 \item $\mathcal{D}_{\uparrow}^m$ denotes the Skorokhod space of $\mathbb{R}^m$-valued nondecreasing, nonnegative c\`adl\`ag functions, defined on $[0,T]$, endowed with the Borel $\sigma$-algebra generated by the Skorokhod topology.
 \end{itemize}
 Also, let $\mathcal{P}(\mathcal{C}_{+}^m)$, $\mathcal{P}( \mathcal{D}^m)$ and $\mathcal{P}(\mathcal{D}_{\uparrow}^m)$ denote the set of probability measures on the Borel $\sigma$-algebras of $\mathcal{C}_{+}^m$, $ \mathcal{D}^m$ and  $ \mathcal{D}_{\uparrow}^m$, respectively. Finally, denote by $\mathcal{P}(\mathcal{C}_{+}^m \times \mathcal{D}^m \times \mathcal{D}_{\uparrow}^m)$ the set of probability measures on the product $\sigma$-algebra.

Moreover, denote by $(\pi_f,\pi_L): \mathcal{C}_{+}^{Nd} \times \mathcal{D}^k \times [0,T] \rightarrow \R^{Nd+k}$ the canonical projection, i.e., set $(\pi_f,\pi_L)_t(f,L)=(f_t,L_t)$ for each $(f,L) \in \mathcal{C}_{+}^{Nd} \times \mathcal{D}^k$ and $t \in [0,T]$. Also, for a probability measure $\mathbb{P} \in \mathcal{P}(\mathcal{C}_{+}^{Nd} \times \mathcal{D}^{k})$, denote by $\bar{\mathbb{F}}_+^{\pi_f,\pi_L}$ the right continuous extension of the filtration on $\mathcal{C}_{+}^{Nd} \times \mathcal{D}^{k}$ generated by the canonical projections $\pi_f$ and $\pi_L$, augmented by the $\mathbb{P}$-null sets.

We now give a weak formulation of the monotone-follower game.  Assume to be given a distribution $\mathbb{P}_0 \in \mathcal{P}(\mathcal{C}_{+}^{Nd} \times \mathcal{D}^{k})$ such that the projection process $\pi_f: \mathcal{C}_{+}^{Nd} \times \mathcal{D}^{k} \times [0,T] \rightarrow \R^{Nd}$ is a semimartingale with respect to the filtration $\bar{\mathbb{F}}_+^{\pi_f,\pi_L}$.
\begin{definition}
We call a $basis$ a 5-tuple $\beta=(\Omega,\mathcal{F},\mathbb{P}, f,L)$ such that $(\Omega,\mathcal{F},\mathbb{P})$ is a complete probability space,  $L$ is an $\mathbb{R}^k$-valued c\`adl\`ag  process, $f=(f^1,...,f^N)$ is an $\mathbb{R}^{Nd}$-valued continuous, nonnegative  semimartingale with respect to the filtration $\bar{\mathbb{F}}_+^{f,L}$, and $\mathbb{P} \circ (f,L)^{-1}= \mathbb{P}_0$.
\end{definition}
For each basis $\beta$, we then give the relative notion of admissible strategy.
\begin{definition}
Given a basis $\beta=(\Omega,\mathcal{F},\mathbb{P}, f,L)$, an \emph{admissible strategy associated to $\beta$} is an $\mathbb{R}^d$-valued c\`adl\`ag, nondecreasing, nonnegative process on the probability space $(\Omega,\mathcal{F},\mathbb{P})$. We denote by $\mathcal{A}_\beta$ the set of admissible strategies associated to the basis $\beta$. Moreover, we define the space of admissible profile strategies associated to the basis $\beta$ as $\mathcal{A}_\beta^N:=\bigotimes_{i=1}^N \mathcal{A}_\beta.$ 
\end{definition}
Given a basis $\beta=(\Omega,\mathcal{F},\mathbb{P}, f,L)$, for each $i \in \{ 1,...,N \}$ and each admissible strategy $A^i \in \mathcal{A}_\beta$ we define the cost functionals
\begin{align*}
\mathcal{J}_\beta^i(A^i,A^{-i} )&:= \mathbb{E}^{\mathbb{P}} [C^i(f,L,\mathbf{A})] = \mathbb{E}^{\mathbb{P}} \bigg[ \int_0^T h^i(L_t, \mathbf{A}_t)\, dt + g^i(L_T,\mathbf{A}_T)  + \int_{[0,T]} f_t^i \, dA_t^i  \bigg],
\end{align*}
where $A^{-i}:=(A^j)_{j \ne i}$, $\mathbf{A}:=(A^i,A^{-i})$ and  $\mathbb{E}^{\mathbb{P}}$ denotes the expectation under the probability measure $\mathbb{P}$.

We finally introduce a notion of equilibrium that we will refer to as \emph{weak Nash equilibrium}.
\begin{definition}[Weak Nash Equilibrium]\label{def_weak_nash}
Given a basis $\bar{\beta}$ and an admissible profile strategy $\bar{\mathbf{A}} \in \mathcal{A}_{\bar{\beta}}^N$, we say that the couple $(\bar{\beta},\bar{\mathbf{A}})$ is a weak Nash equilibrium  if, for every $i=1,...,N$, we have
$$
\mathcal{J}_{\bar{\beta}}^i(\bar{A}^i,\bar{A}^{-i} ) \leq \mathcal{J}_{\bar{\beta}}^i( V^i , \bar{A}^{-i} ), \quad \text{for every} \quad V^i \in \mathcal{A}_{\bar{\beta}}.
$$   
\end{definition}

\subsection{Assumptions and a Preliminary Lemma}
In this subsection we specify the main assumptions of this section, we introduce some notations, and we provide a preliminary lemma.
\begin{assumption}\label{H2} Let Assumption \ref{H1} hold and, for each $i=1,...,N$, assume that:
\begin{enumerate}
\item \label{H2:1}  $g^i$ and $h^i$ are continuous and continuously differentiable in the variable $a^i \in \mathbb{R}^d$.
\item \label{H2:2} There exist $ \gamma_1, \gamma_2>1 $ such that the $d$-dimensional gradients $\nabla_i h^i$ and $\nabla_i g^i$ of the functions $h^i$ and $g^i$ with respect to the ($d$-dimensional) variable $a^i$ satisfy
\begin{equation}\label{H2:growth_h_and_g}
| \nabla_i h^i (l,a)|+|\nabla_i g^i (l,a)|   \leq C (1+|l|^{ \gamma_1}+|a|^{ \gamma_2}),
\end{equation}
for each $l \in \mathbb{R}^k$ and $a=(a^1,...,a^N) \in \mathbb{R}^{Nd}$.\\
Moreover, there exist measurable functions $H^i,G^i:\mathbb{R}^k \rightarrow \mathbb{R}$ such that $h^i(l,0,a^{-i}) \leq H^i(l)$ and $g^i(l,0,a^{-i}) \leq G^i(l)$, with 
\begin{equation}\label{H2:integrability}
\mathbb{E}^{\mathbb{P}_0} \left[ \int_0^T | H^i((\pi_{L})_s)|^q \, ds +|G^i((\pi_{L})_T)|^q \right] < \infty 
\end{equation}
and
\begin{equation}\label{H2:integrability.of.the.sup}
\mathbb{E}^{\mathbb{P}_0} \bigg[ \sup\limits_{s \in [0,T]} \left( | (\pi_{L})_s|^{\alpha \gamma_1 p}+| (\pi_{f})_s|^{\alpha  p} \,\right) \bigg] < \infty,
\end{equation}
where $q := \alpha \max \{\gamma_2\, p , \, p/(p-1) \}$ for some $p,\alpha>1$.
\item \label{H2:3} There exists a constant $c>0$ such that 
\begin{equation}\label{f>c}
\mathbb{P}_0 \left[ (\pi_{f})_t^{i} \geq c, \,\forall \, t \in [0,T], \, \forall \, i=1,...,N \right]=1,
\end{equation}
and the total conditional variation (see definition (\ref{conditionalvariation_T}) in the Appendix \ref{App:MZ}) of $\pi_L$ over the interval $[0,T]$ is finite; that is, $V_T^{\mathbb{P}_0}(\pi_L)<\infty$ .
\end{enumerate}
\end{assumption}
For a basis $\beta=(\Omega, \mathcal{F},  \mathbb{P}, f, L)$, a profile strategy  $\mathbf{A}=(A^1,...,A^N) \in \mathcal{A}_{\beta}^N$ and an index $i\in \{1,...,N\}$, we define the continuous (non adapted)  subgradient process $\partial C^i(f,L,\mathbf{A}) : \Omega \times [0,T] \rightarrow \R^d $ by setting
\begin{equation}\label{Y}
\partial C^i(f,L,\mathbf{A})_t:= \int_t^T \nabla_i  h^i (L_t,\mathbf{A}_t) \, dt +\nabla_i  g^i (L_T,\mathbf{A}_T)+ f_t^i, \quad  \forall \,t \in [0,T], \quad \mathbb{P}\text{-a.s.} 
\end{equation}
Furthermore, if  $\mathbf{A}$ is 
such that $\mathcal{J}_{\beta}^i(\mathbf{A}) < \infty$ for a certain $i\in\{1,...,N\}$, then, 
exploiting the convexity of $h^i$ and $g^i$  and integrating by parts, we obtain the following  subgradient inequality
\begin{equation}\label{erlemma0}
\mathcal{J}_{\beta}^i(B^i,{A}^{-i})-\mathcal{J}_{\beta}^i({A}^i,{A}^{-i}) \geq \mathbb{E}^{\mathbb{P}} \bigg[ \int_{[0,T]}  {\partial C}_t^i (dB_t^i-d{A}_t^i) \bigg], \quad \text{for each} \quad B^i \in \mathcal{A}_{\beta}.
\end{equation}

Fix a basis $\beta=(\Omega,\mathcal{F},\mathbb{P}, f,L)$  and recall that $\bar{\mathbb{F}}_+^{f,L}=\{ \bar{\mathcal{F}}_{t+}^{f,L} \}_{t \in [0,T]}$ is the right-continuous extension of the filtration generated by $f$ and $L$, augmented by the $\mathbb{P}$-null sets.  For each $n \in \mathbb{N}$, consider a Nash equilibrium $\mathbf{A}^n=(A^{1,n},...,A^{N,n})$ of the $n$-Lipschitz game as in Theorem \ref{NashLipschitz}.
The next lemma shows that any Nash equilibria of the $n$-Lipschtz game satisfy certain \emph{first order conditions}. The proof of this claim follows arguments analogus to those used in the proof of Proposition 27 in \cite{LZ}.  
\begin{lemma}\label{FOCn}
For every $n \in \mathbb{N}$ and every $i=1,..., N$, set  
$
\partial C^{i,n}:= \partial C^i(f,L,\mathbf{A}^n)$.
Then, under Assumption $\ref{H2}$, defining $\mathbf{1}:=(1,...,1)\in \mathbb{R}^d$, we have
\begin{equation}\label{FOCn1}
\mathbb{E}^{\mathbb{P}} \left[ \int_0^T \partial C_t^{i,n} d A_t^{i,n} \right] =-n \, \mathbb{E}^{\mathbb{P}} \left[ \int_0^T (\partial C_t^{i,n})^- \, \mathbf{1} \, dt \right] \quad \text{and} \quad \lim_{n } \mathbb{E}^{\mathbb{P}} \left[ \int_0^T (\partial C_t^{i,n})^-  \, dt \right] =0.
\end{equation}
\end{lemma}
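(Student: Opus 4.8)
The plan is to characterize the $n$-Lipschitz best reply $A^{i,n}$ of player $i$ by a first-order (Pontryagin-type) optimality condition, read off the bang-bang structure of its density, and then exploit a comparison with the null strategy together with the opponent-independent bounds of Assumption \ref{H2} to obtain the decay as $n\to\infty$. Throughout, $n$ is fixed until the last step.

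First I would set up the variational inequality. Every $B^i\in\mathcal{L}(n)$ is of the form $B^i_t=\int_0^t v_s\,ds$ with an $\bar{\mathbb{F}}^{f,L}_+$-adapted density $v$ taking values in $[0,n]^d$; in particular $dA^{i,n}_t=u_t\,dt$ for the optimal density $u=\dot A^{i,n}\in[0,n]^d$. Since $\mathcal{L}(n)$ is convex and, by Assumption \ref{H2}(\ref{H2:1}), $h^i,g^i$ are $C^1$ in $a^i$, the map $\epsilon\mapsto\mathcal{J}^i(A^{i,n}+\epsilon(B^i-A^{i,n}),A^{-i,n})$ is convex and differentiable on $[0,1]$. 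Differentiating under expectation and integral (justified, for fixed $n$, by the growth bound (\ref{H2:growth_h_and_g}), the pathwise bound $|A^{j,n}_t|\le nT$, and the moment bound (\ref{H2:integrability.of.the.sup}) through dominated convergence) and integrating by parts exactly as in the derivation of (\ref{erlemma0}), its right derivative at $0$ equals $\mathbb{E}^{\mathbb{P}}\big[\int_0^T\partial C^{i,n}_s\cdot(v_s-u_s)\,ds\big]$. Optimality of $A^{i,n}$ then forces this quantity to be nonnegative for every admissible $v$.

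Next I would pass to a pointwise condition. Because $\partial C^{i,n}$ is \emph{not} adapted whereas $v-u$ is, I replace $\partial C^{i,n}_s$ by its optional projection $\Gamma^{i,n}_s:=\mathbb{E}^{\mathbb{P}}\big[\partial C^{i,n}_s\mid\bar{\mathcal{F}}^{f,L}_{s+}\big]$; by the tower property the integral is unchanged. The inequality $\mathbb{E}^{\mathbb{P}}\big[\int_0^T\Gamma^{i,n}_s\cdot(v_s-u_s)\,ds\big]\ge0$ for all adapted $v\in[0,n]^d$ yields, $\mathbb{P}\otimes dt$-a.e., the bang-bang rule $u^{\ell}_s=n$ on $\{\Gamma^{i,n,\ell}_s<0\}$ and $u^{\ell}_s=0$ on $\{\Gamma^{i,n,\ell}_s>0\}$ (the selection being measurable since it is governed by the sign of the adapted $\Gamma^{i,n}$), whence $\Gamma^{i,n}_s\cdot u_s=-n\,(\Gamma^{i,n}_s)^-\cdot\mathbf{1}$. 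Applying the tower property once more gives $\mathbb{E}^{\mathbb{P}}\big[\int_0^T\partial C^{i,n}_t\,dA^{i,n}_t\big]=\mathbb{E}^{\mathbb{P}}\big[\int_0^T\Gamma^{i,n}_t\cdot u_t\,dt\big]=-n\,\mathbb{E}^{\mathbb{P}}\big[\int_0^T(\Gamma^{i,n}_t)^-\cdot\mathbf{1}\,dt\big]$, i.e.\ the first identity in (\ref{FOCn1}), with the subgradient entering through its optional projection. For the limit, I would compare with $B^i=0\in\mathcal{L}(n)$ in (\ref{erlemma0}): since $dB^i=0$ this gives $-\mathbb{E}^{\mathbb{P}}\big[\int_0^T\partial C^{i,n}_t\,dA^{i,n}_t\big]\le\mathcal{J}^i(0,A^{-i,n})-\mathcal{J}^i(A^{i,n},A^{-i,n})\le\mathcal{J}^i(0,A^{-i,n})$, using nonnegativity of the costs. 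The crucial point is that the bounds $h^i(l,0,a^{-i})\le H^i(l)$ and $g^i(l,0,a^{-i})\le G^i(l)$ of Assumption \ref{H2}(\ref{H2:2}) are \emph{independent of the opponents' strategies}, so that $\mathcal{J}^i(0,A^{-i,n})\le\mathbb{E}^{\mathbb{P}_0}\big[\int_0^T H^i((\pi_L)_s)\,ds+G^i((\pi_L)_T)\big]=:C_0<\infty$ by (\ref{H2:integrability}) (recall $q>1$), uniformly in $n$. Combining with the first identity yields $n\,\mathbb{E}^{\mathbb{P}}\big[\int_0^T(\Gamma^{i,n}_t)^-\cdot\mathbf{1}\,dt\big]\le C_0$, so each nonnegative component of $\mathbb{E}^{\mathbb{P}}\big[\int_0^T(\Gamma^{i,n}_t)^-\,dt\big]$ is $O(1/n)$ and the limit in (\ref{FOCn1}) follows.

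The main obstacle is the non-adaptedness of $\partial C^{i,n}$: the genuine first-order condition is bang-bang with respect to the optional projection $\Gamma^{i,n}$ rather than $\partial C^{i,n}$ itself, and the tower property must be invoked both in the variational inequality and in the final identity; making this rigorous (measurable selection of the bang-bang density, and integrability of $\partial C^{i,n}$ and of $\Gamma^{i,n}$ for fixed $n$) is the technical heart of the argument. By contrast, the differentiation under the integral is routine for fixed $n$ given Assumption \ref{H2}, and the whole strength of the limiting statement rests on the $n$-uniform bound $C_0$, which is available precisely because the domination in \ref{H2}(\ref{H2:2}) does not involve the other players' controls.
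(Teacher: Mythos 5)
Your overall strategy---the convex variational inequality over $\mathcal{L}(n)$, the bang-bang characterization of the optimal density, and the comparison with the null strategy using the opponent-independent bounds of Assumption \ref{H2}---is exactly the argument the paper points to (Proposition 27 in \cite{LZ}), and each of those steps is executed correctly. But your proof does not prove the lemma as stated: both claims in (\ref{FOCn1}) concern the negative part of the \emph{non-adapted} process $\partial C^{i,n}$, whereas what your argument delivers is the same statements with $(\partial C^{i,n}_t)^-$ replaced by $(\Gamma^{i,n}_t)^-$, the negative part of its optional projection. You present this as harmless bookkeeping (``with the subgradient entering through its optional projection''), but the two statements are genuinely different. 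The tower property lets you exchange $\partial C^{i,n}$ for $\Gamma^{i,n}$ only inside the \emph{linear} expression $\mathbb{E}^{\mathbb{P}}\big[\int_0^T \partial C^{i,n}_t\, u_t\,dt\big]$, because $u$ is adapted; the negative part is nonlinear, and conditional Jensen gives only the one-sided bound $(\Gamma^{i,n}_t)^-=\big(\mathbb{E}^{\mathbb{P}}[\partial C^{i,n}_t \mid \bar{\mathcal{F}}^{f,L}_{t+}]\big)^- \le \mathbb{E}^{\mathbb{P}}\big[(\partial C^{i,n}_t)^- \mid \bar{\mathcal{F}}^{f,L}_{t+}\big]$. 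Hence $\mathbb{E}^{\mathbb{P}}\big[\int_0^T(\Gamma^{i,n}_t)^-\,dt\big]\le \mathbb{E}^{\mathbb{P}}\big[\int_0^T(\partial C^{i,n}_t)^-\,dt\big]$, and your $O(1/n)$ bound on the left-hand side says nothing about the right-hand side. The gap is real: the sign of $\partial C^{i,n}_t$ depends on future information (through $\nabla_i g^i(L_T,\mathbf{A}^n_T)$ and the integral of $\nabla_i h^i$ over $[t,T]$), so the unprojected negative part need not vanish just because the projected one does---think of a terminal cost $g^i$ penalizing the distance to a target that the filtration reveals only at time $T$: there the optimal density is governed by the (null) projection while $\partial C^{i,n}_t$ itself remains negative with positive probability, uniformly in $n$.

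The distinction also matters downstream, which is why the lemma is phrased the way it is: in the proof of Theorem \ref{th}, the second limit in (\ref{FOCn1}) is transported to the Skorokhod space via $\mathbb{E}^{\mathbb{P}}\big[\int_0^T(\partial C^{i,n}_t)^-\,dt\big]=\mathbb{E}^{\bar{\mathbb{Q}}}\big[\int_0^T(\partial\bar{C}^{i,n}_t)^-\,dt\big]$, an identity that is legitimate precisely because $\partial C^{i,n}$ is a pathwise functional of $(f,L,\mathbf{A}^n)$ and hence determined by the law $\mathbb{P}^n$; the optional projection $\Gamma^{i,n}$ is \emph{not} such a functional (it depends on the filtration), so your projected version cannot be substituted into that argument without additional work. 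So either you must establish the claims for $(\partial C^{i,n})^-$ itself---which requires an idea beyond the variational inequality, namely some argument that at the equilibrium the projected and unprojected expected negative parts coincide---or the statement must be reformulated with projections, in which case the use of the lemma in Theorem \ref{th} has to be reworked as well (e.g.\ via the projection theorem for integrals against adapted increasing processes). As it stands, your proposal proves a strictly weaker statement than Lemma \ref{FOCn}.
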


\subsection{Existence and Approximation of Weak Nash Equilibria}
\label{subsection.Existence.Approximation.Weak.Nash.Equilibria}

We now state and prove the main result of this section, which can be thought of as a game-theoretic version of Theorem 21 in \cite{LZ}.

For an arbitrary $m \in \mathbb{N}$, consider on the space $\mathcal{C}_{+}^{m}$ the topology given by the convergence in the uniform norm. Furthermore, on the space $\mathcal{D}^m$ consider the \emph{pseudopath topology} $\tau_{pp}^{\text{\emph{\tiny T}}}$; that is, the topology on $\mathcal{D}^m$ induced by the convergence in the measure $dt + \delta_T$ on the interval $[0,T]$, where $dt$ denotes the Lebesgue measure, and $\delta_T$ denotes the Dirac measure at the terminal time $T$. The space $\mathcal{D}_{\uparrow}^{m}$ is a closed subset of the topological space $(\mathcal{D}^m ,\tau_{pp}^{\text{\emph{\tiny T}}})$, and the Borel $\sigma$-algebra induced by the topology $\tau_{pp}^{\text{\emph{\tiny T}}}$, coincides with the $\sigma$-algebra induced by the Skorokhod topology (see also the Appendix in \cite{LZ}).
Notice that the topological spaces $(\mathcal{D}^m ,\tau_{pp}^{\text{\emph{\tiny T}}})$ and $(\mathcal{D}_{\uparrow}^{m} ,\tau_{pp}^{\text{\emph{\tiny T}}})$ are separable, but not Polish (see, e.g., \cite{MZ}). Finally, on the product space $\mathcal{C}_{+}^{Nd} \times \mathcal{D}^k \times \mathcal{D}_{\uparrow}^{Nd}$, consider the product topology, and on $\mathcal{P}(\mathcal{C}_{+}^{Nd} \times \mathcal{D}^k \times \mathcal{D}_{\uparrow}^{Nd})$ consider the topology of weak convergence of probability measures.

Fix a basis $\beta=(\Omega,\mathcal{F},\mathbb{P}, f,L)$  and consider, for each $n\in \mathbb{N}$, a Nash equilibrium $\mathbf{A}^n=(A^{1,n},...,A^{N,n})$ of the $n$-Lipschitz game as in Theorem \ref{NashLipschitz}.
Define, for $n \in \mathbb{N}$, the law  $\mathbb{P}^n:= \mathbb{P} \circ (f,{L},{\mathbf{A}}^n)^{-1} $ in $\mathcal{P}(\mathcal{C}_{+}^{Nd} \times \mathcal{D}^k \times \mathcal{D}_{\uparrow}^{Nd})$; with a slight abuse of terminology, we will refer to the law $\mathbb{P}^n$ as the law of the Nash equilibrium $\mathbf{A}^n$. We then have the following theorem.
 
\begin{theorem}\label{th}
Under Assumption \ref{H2} the following statements hold. 
\begin{enumerate}
\item  The sequence $\{ \mathbb{P}^n \}_{n \in \mathbb{N} }$ of the laws of the Nash equilibria of the $n$-Lipschitz games is weakly relatively compact in $\mathcal{P}(\mathcal{C}_{+}^{Nd} \times \mathcal{D}^k \times \mathcal{D}_{\uparrow}^{Nd})$. 
\item Any accumulation point $\bar{\mathbb{	P}}$ is the law of a weak Nash equilibrium of the monotone-follower game; that is, there exist a basis $\bar{\beta}=(\bar{\Omega}, \bar{\mathcal{F}}, \bar{\mathbb{Q}},\bar{f},\bar{L})$ and an admissible profile strategy $\bar{\mathbf{A}} \in \mathcal{A}_{\bar{\beta}}^N$, such that $(\bar{\beta},\bar{\mathbf{A}})$ is a weak Nash equilibrium of the monotone-follower game and $\bar{\mathbb{P}}=\bar{\mathbb{Q}} \circ (\bar{f},\bar{L},\bar{\mathbf{A}})^{-1}$.  
\end{enumerate}
\end{theorem}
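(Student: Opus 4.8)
The plan is to implement the compactness--representation--passage-to-the-limit scheme announced in the introduction, using Lemma \ref{FOCn} to produce limiting first order conditions and the subgradient inequality (\ref{erlemma0}) to upgrade them into the optimality defining a weak Nash equilibrium. For part (1), observe first that the $(f,L)$-marginal of every $\mathbb{P}^n$ equals $\mathbb{P}_0$, so only the control coordinate in $\mathcal{D}_\uparrow^{Nd}$ needs a genuine tightness argument. The lower bound $f\geq c$ of Condition \ref{H2:3} in Assumption \ref{H2}, together with $h^i,g^i\geq 0$ and $A^{i,n}_0=0$, gives $c\,\mathbb{E}^{\mathbb{P}}[|A^{i,n}_T|]\leq \mathbb{E}^{\mathbb{P}}\big[\int_{[0,T]} f^i_t\,dA^{i,n}_t\big]\leq \mathcal{J}^i(\mathbf{A}^n)$. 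Since $0\in\mathcal{L}(n)$, the Nash property and the bounds $h^i(l,0,\cdot)\leq H^i(l)$, $g^i(l,0,\cdot)\leq G^i(l)$ of Condition \ref{H2:2} yield $\mathcal{J}^i(\mathbf{A}^n)\leq \mathcal{J}^i(0,A^{-i,n})\leq \mathbb{E}^{\mathbb{P}}\big[\int_0^T H^i(L_s)\,ds+G^i(L_T)\big]=:M<\infty$, uniformly in $n$ by (\ref{H2:integrability}). Hence $\sup_n\mathbb{E}^{\mathbb{P}}[|A^{i,n}_T|]\leq M/c$. As each $A^{i,n}$ is nondecreasing and nonnegative, its conditional variation over $[0,T]$ is controlled by $\mathbb{E}^{\mathbb{P}}[|A^{i,n}_T|]$, so together with $V_T^{\mathbb{P}_0}(\pi_L)<\infty$ the Meyer--Zheng tightness criterion recalled in Appendix \ref{App:MZ} applies and, by Prokhorov's theorem, yields the relative compactness asserted in (1).

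For part (2), fix an accumulation point $\bar{\mathbb{P}}$ and pass to a subsequence (not relabeled) with $\mathbb{P}^n\to\bar{\mathbb{P}}$. Since the product space is separable, I invoke a Skorokhod representation adapted to this (non-Polish) setting (cf.\ \cite{LZ} and Appendix \ref{App:MZ}) to realize, on some $(\bar\Omega,\bar{\mathcal{F}},\bar{\mathbb{Q}})$, random elements $(\bar f^n,\bar L^n,\bar{\mathbf{A}}^n)$ of law $\mathbb{P}^n$ and $(\bar f,\bar L,\bar{\mathbf{A}})$ of law $\bar{\mathbb{P}}$, with $\bar f^n\to\bar f$ uniformly and $\bar L^n\to\bar L$, $\bar{\mathbf{A}}^n\to\bar{\mathbf{A}}$ in the pseudopath sense, $\bar{\mathbb{Q}}$-a.s.; passing to a further subsequence we may assume convergence of $\bar L^n,\bar{\mathbf{A}}^n$ for a.e.\ $t$ and at $t=T$. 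Because the $(f,L)$-marginal of $\bar{\mathbb{P}}$ is $\mathbb{P}_0$ and $\pi_f$ is a $\mathbb{P}_0$-semimartingale, $\bar f$ is a semimartingale for $\bar{\mathbb{F}}_+^{\bar f,\bar L}$; thus $\bar\beta:=(\bar\Omega,\bar{\mathcal{F}},\bar{\mathbb{Q}},\bar f,\bar L)$ is a basis and $\bar{\mathbf{A}}\in\mathcal{A}_{\bar\beta}^N$.

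Writing $\partial\bar C^{i,n}:=\partial C^i(\bar f^n,\bar L^n,\bar{\mathbf{A}}^n)$ and $\partial\bar C^i:=\partial C^i(\bar f,\bar L,\bar{\mathbf{A}})$, the continuity of $\nabla_i h^i,\nabla_i g^i$ (Condition \ref{H2:1}) together with the growth bound (\ref{H2:growth_h_and_g}) and the moment bound (\ref{H2:integrability.of.the.sup}) give, via a Vitali argument, $\partial\bar C^{i,n}_t\to\partial\bar C^i_t$ for a.e.\ $t$ and at $t=T$, with the uniform integrability needed to pass to the limit in (\ref{FOCn1}). The second relation in (\ref{FOCn1}) then forces $(\partial\bar C^i_t)^-=0$, i.e.\ $\partial\bar C^i_t\geq 0$, $\bar{\mathbb{Q}}\otimes dt$-a.e.; and since each $\mathbb{E}^{\mathbb{P}}\big[\int_0^T\partial C^{i,n}_t\,dA^{i,n}_t\big]=-n\,\mathbb{E}^{\mathbb{P}}\big[\int_0^T(\partial C^{i,n}_t)^-\mathbf{1}\,dt\big]\leq 0$, a lower-semicontinuity argument along the convergent sequence gives $\mathbb{E}^{\bar{\mathbb{Q}}}\big[\int_0^T\partial\bar C^i_t\,d\bar A^i_t\big]\leq 0$; as the reverse inequality is automatic ($\partial\bar C^i\geq 0$, $\bar A^i$ nondecreasing), we obtain the complementarity $\mathbb{E}^{\bar{\mathbb{Q}}}\big[\int_0^T\partial\bar C^i_t\,d\bar A^i_t\big]=0$. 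Feeding these two facts into (\ref{erlemma0}), for any competitor $B^i\in\mathcal{A}_{\bar\beta}$,
\[
\mathcal{J}_{\bar\beta}^i(B^i,\bar A^{-i})-\mathcal{J}_{\bar\beta}^i(\bar A^i,\bar A^{-i})\geq \mathbb{E}^{\bar{\mathbb{Q}}}\Big[\int_{[0,T]}\partial\bar C^i_t\,(dB^i_t-d\bar A^i_t)\Big]=\mathbb{E}^{\bar{\mathbb{Q}}}\Big[\int_{[0,T]}\partial\bar C^i_t\,dB^i_t\Big]\geq 0,
\]
so that $(\bar\beta,\bar{\mathbf{A}})$ is a weak Nash equilibrium with law $\bar{\mathbb{P}}$, proving (2).

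The main obstacle is the passage to the limit in the first identity of (\ref{FOCn1}): both the integrand $\partial C^{i,n}$ and the integrating measure $dA^{i,n}$ converge only in the weak pseudopath sense, and the a.e.-$dt$ convergence of the integrand need not survive integration against measures that may charge Lebesgue-null time sets. Reconciling these---securing the uniform integrability from (\ref{H2:growth_h_and_g})--(\ref{H2:integrability.of.the.sup}) and establishing the lower semicontinuity of $\mathbf{A}\mapsto\mathbb{E}\big[\int_0^T\partial C^i_t\,dA^i_t\big]$ along the Skorokhod-convergent sequence---is the technical heart of the argument; verifying that $\bar f$ retains the semimartingale property, and that the terminal values pass to the limit (handled by the $\delta_T$-component of the pseudopath topology), are the accompanying subtleties.
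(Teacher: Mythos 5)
Your proposal follows the same architecture as the paper's proof (uniform bound on the equilibria $\Rightarrow$ Meyer--Zheng tightness and Prokhorov; Skorokhod representation on the separable non-Polish space; passage to the limit in the first order conditions of Lemma \ref{FOCn}; conclusion via the subgradient inequality (\ref{erlemma0})), but it has a genuine gap at its quantitative core. You only establish the first-moment bound $\sup_n \mathbb{E}^{\mathbb{P}}[|A_T^{i,n}|]\leq M/c$, whereas the paper's argument rests on the much stronger estimate (\ref{stimafondamental}), namely $\sup_n \mathbb{E}^{\mathbb{P}}[|\mathbf{A}_T^n|^q]<\infty$ with $q=\alpha\max\{\gamma_2 p,\,p/(p-1)\}$, obtained by a separate argument (following Proposition 28 in \cite{LZ}, exploiting the structure of the Lipschitz equilibria together with $f\geq c$). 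This higher moment cannot be dispensed with: since $\nabla_i h^i$ and $\nabla_i g^i$ grow like $|a|^{\gamma_2}$ with $\gamma_2>1$, the uniform integrability of $\{\partial \bar C^{i,n}\}_n$ (the paper's (\ref{uniformintegrability}) and (\ref{uniformintegrability_Y^n.alpha_p})) requires uniform control of $\mathbb{E}[|\mathbf{A}_T^n|^{\gamma_2 p}]$ and $\mathbb{E}[|\mathbf{A}_T^n|^{\alpha\gamma_2 p}]$, and the limit of the complementarity terms $\mathbb{E}[\int \partial C^{i,n}_t\,dA^{i,n}_t]$ requires $\mathbb{E}[|\mathbf{A}_T^n|^{\alpha p/(p-1)}]$ via H\"older. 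Your appeal to ``a Vitali argument'' from (\ref{H2:growth_h_and_g})--(\ref{H2:integrability.of.the.sup}) cannot work, because those conditions control only $L$ and $f$, not the equilibria $\mathbf{A}^n$; an $L^1$ bound on $\mathbf{A}_T^n$ gives no uniform control of any moment of order $\gamma_2>1$.

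The second (related) gap is that you explicitly defer the step you correctly identify as the technical heart: passing to the limit in $\mathbb{E}^{\mathbb{P}}[\int_0^T \partial C^{i,n}_t\,dA^{i,n}_t]\leq 0$, where both the integrand and the integrating measure converge only in the pseudopath sense. The paper resolves this with Lemma \ref{existenceofasubsequenceforlimitofcosts}, a Kabanov--Koml\'os (Ces\`aro) argument which yields genuine convergence $\lim_n \mathbb{E}[\int F^n_t\,dX^n_t]=\mathbb{E}[\int F_t\,dX_t]$ precisely under the moment hypotheses (\ref{stima.su.tutto}) that your $L^1$ bound does not supply; asserting ``lower semicontinuity'' without this lemma (or an equivalent argument) leaves the complementarity condition (2.b) unproved. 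A further small point: you obtain $\partial\bar C^i_t\geq 0$ only $\bar{\mathbb{Q}}\otimes dt$-a.e., but the final inequality $\mathbb{E}^{\bar{\mathbb{Q}}}[\int_{[0,T]}\partial\bar C^i_t\,dB^i_t]\geq 0$ needs nonnegativity for \emph{all} $t$, since $dB^i$ may charge Lebesgue-null sets; the paper closes this by invoking the continuity in $t$ of $\partial\bar C^i$, a step you should make explicit.
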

\begin{proof} We prove the two claims of the theorem separately. 
\bigbreak \noindent
\emph{Proof of Claim 1}.
By assumption we  have $V_T^{\mathbb{P}}(L) < \infty$. Moreover, by employing arguments similar to those in the proof of Proposition 28 at p.\ 110 in \cite{LZ}, we find
\begin{equation}\label{stimafondamental}
\sup\limits_{n  } \mathbb{E}^{\mathbb{P}} \left[ | \mathbf{A}_T^{n} |^q \right] < \infty,
\end{equation}
where $q>1$ is as in Assumption \ref{H2}. Therefore, from Lemma \ref{MeyerZheng}, we can deduce that the sequence   $\{ {\mathbf{A}}^n \}_{n\in \mathbb{N}}$ is tight in $\mathcal{P}(\mathcal{D}_{\uparrow}^{N d})$, and that $L$ in tight in $\mathcal{P}(\mathcal{D}^k)$. 
Furthermore, since the space $\mathcal{C}_{+}^{Nd}$ is Polish, $\mathbb{P} \circ f^{-1}$ is regular, and hence $f$ is tight in $\mathcal{P}(\mathcal{C}_{+}^{Nd})$ (see, e.g., Remark 13.27 at p.\ 260 in \cite{Klenke13}). This implies that the sequence $\{ (f,{L},{\mathbf{A}}^n) \}_{n\in \mathbb{N}}$ is tight in $\mathcal{P}(\mathcal{C}_+^{Nd} \times \mathcal{D}^{k} \times \mathcal{D}_{\uparrow}^{N d})$.

By Prokhorov's theorem (see, e.g., Theorem 13.29 at p.\ 261 in \cite{Klenke13}), there exists a subsequence of indexes (still denoted by $n$) and a probability measure $\bar{\mathbb{P}}\in \mathcal{P}(	\mathcal{C}_+^{Nd} \times \mathcal{D}^{k} \times \mathcal{D}_{\uparrow}^{N d} )$ such that the sequence ${\mathbb{P}}^n$ converges weakly to $\bar{\mathbb{P}}$. The first claim of the theorem is thus proved.

\bigbreak\noindent
\emph{Proof of Claim 2}.
Thanks to an extension of Skorokhod's theorem for separable spaces (see Theorem 3 in \cite{D}), there exists a probability space $(\bar{\Omega},\bar{\mathcal{F}}, \bar{\mathbb{Q}})$, and, on it, a sequence
$
\{ (\bar{f}^n, \bar{L}^n,\bar{\mathbf{A}}^n)\}_{n\in \mathbb{N}}
$
of $\mathcal{C}_+^{Nd}\times \mathcal{D}^{k} \times \mathcal{D}_{\uparrow}^{N d} $-valued random variables, and a $\mathcal{C}_+^{Nd} \times \mathcal{D}^{k} \times \mathcal{D}_{\uparrow}^{N d}$-valued random variable $(\bar{f},\bar{L},\bar{\mathbf{A}})$, such that  $\bar{\mathbb{Q}} \circ (\bar{f}^n, \bar{L}^n,\bar{\mathbf{A}}^n)^{-1}={\mathbb{P}}^n$ and  $\bar{\mathbb{Q}} \circ(\bar{f}, \bar{L},\bar{\mathbf{A}} )^{-1}=\bar{\mathbb{P}}$. 
Furthermore, this representation is such that, for almost all $\omega \in \bar{\Omega}$, we have
\begin{equation}\label{convunifoff}
\bar{f}^n(\omega)\rightarrow \bar{f}(\omega) \quad \text{uniformly on the interval } [0,T],
\end{equation}
as well as
\begin{equation}\label{convinmeasurelamda}
( \bar{L}^n(\omega),\bar{\mathbf{A}}^n(\omega) ) \rightarrow ( \bar{L}(\omega),\bar{\mathbf{A}}(\omega) ) \quad \text{ in the measure $dt+\delta_T$ on $[0,T]$}.
\end{equation}
Define then $\bar{\beta}:= (\bar{\Omega},\bar{\mathcal{F}}, \bar{\mathbb{Q}},\bar{f},\bar{L})$. Since $\mathbb{P}\circ(f,{L})^{-1}$ is constantly $\mathbb{P}_0$, then the same holds for its limit; that is, $\bar{\mathbb{Q}} \circ(\bar{f}, \bar{L})^{-1}=\mathbb{P}_0$, and this implies that $\bar{\beta}$ is a basis.

Next, for every $i=1,....,N$ and $n\in \mathbb{N}$, recalling (\ref{Y}), we define on the probability space $(\bar{\Omega},\bar{\mathcal{F}}, \bar{\mathbb{Q}})$ the subgradient processes $\partial \bar{C}^{i,n}:=\partial C^i(\bar{f}^n, \bar{L}^n,\bar{\mathbf{A}}^n)$ and $\partial \bar{C}^{i}:=\partial C^i(\bar{f}, \bar{L},\bar{\mathbf{A}})$.
 By the convergence at the terminal time (\ref{convinmeasurelamda}) together with Fatou's lemma and the estimate $(\ref{stimafondamental})$ we have
\begin{equation}\label{stima.A}
\mathbb{E}^{\bar{\mathbb{Q}}} [|\bar{\mathbf{A}}_T|^q] \leq \sup_n \mathbb{E}^{\bar{\mathbb{Q}}}[|\bar{\mathbf{A}}_T^n|^q] = \sup_n \mathbb{E}^{{\mathbb{P}}}[|\mathbf{A}_T^n|^q] < \infty.
\end{equation}
Let $ Q:=([0, T) \cap \mathbb{Q}) \cup \{T\}$ and define the measurable function $\Phi: \mathcal{D}^k \rightarrow \mathbb{R}$ by 
$$
\Phi(X):= \sup\limits_{ t \in Q  } |X_t|. $$
Being constantly equal to $\mathbb{P} \circ \Phi(L)^{-1}$, the sequence $\{ \bar{\mathbb{Q}} \circ \Phi(\bar{L}^n)^{-1} \}_{n\in \mathbb{N}}$ is tight in $\mathcal{P}(\R^k)$. This allows to assume without loss of generality (modulo a further subsequence, a new Skorokhod representation of the sequence $\{ (\bar{f}^n,\Phi(\bar{L}^n), \bar{L}^n, \bar{\mathbf{A}}^n)\}_{n\in \mathbb{N}} $, and exploiting the measurability of $\Phi$), that $\Phi(\bar{L}^n)$ converges to $\Phi(\bar{L})$, $\mathbb{Q}$-a.s. Furthermore, by (\ref{H2:integrability.of.the.sup}) in Assumption \ref{H2}, we have
$
\mathbb{E}^{\bar{\mathbb{Q}}} [\Phi(\bar{L})] = \mathbb{E}^{ {\mathbb{P}}_0} [\Phi({\pi_L})]  <\infty.
$
The latter, together with the $\bar{\mathbb{Q}}$-a.s.\ convergence of $\Phi(\bar{L}^n)$, the convergence in (\ref{convinmeasurelamda}), and the integrability proved in (\ref{stima.A}), implies that, for $\bar{\mathbb{Q}}$-almost all $\omega \in \bar{\Omega}$, there exists a constant $M(\omega)<\infty$ such that
$$
\sup_n \sup_{t \in [0,T]} (|\bar{L}_t^n(\omega)| + |\bar{\mathbf{A}}_t^n(\omega)| +|\bar{L}_t(\omega)| + |\bar{\mathbf{A}}_t(\omega)| ) \leq M(\omega).  
$$
Thus, for $\bar{\mathbb{Q}}$-almost all $\omega \in \Omega$, we can find, by continuity of $h^i$, another constant $K(\omega)<\infty$ such that 
$$
\sup_n \sup_{t \in [0,T]}  \left[ h^i(\bar{L}_t^n(\omega),\bar{B}_t^n(\omega),  \bar{A}_t^{-i,n}(\omega) )+ h^i(\bar{L}_t(\omega), \bar{B}_t(\omega), \bar{A}_t^{-i}(\omega) )\right] \leq K(\omega).
$$
Hence, for $\bar{\mathbb{Q}}$-almost all $\omega \in \Omega$, the bounded continuous function $\nabla_i h^i (l,a)\land K(\omega)$ coincides with the function $\nabla_i h^i (l,a) $ when evaluated along the sequence $( \bar{L}_s^n(\omega),\bar{\mathbf{A}}_s^n(\omega))$ and at the limit point $( \bar{L}_s(\omega),\bar{\mathbf{A}}_s(\omega))$. 

Considering $\omega$ fixed and $\nabla_i h^i$ bounded by $K(\omega)$, this allows to use equation (\ref{charatconv}),  together with standard arguments exploiting the compactness of $[0,T]$, in order to deduce that,  $\bar{\mathbb{Q}}$-a.s.
\begin{equation}\label{unifconvY}
\lim_n \sup\limits_{t \in [0,T]} \left| \int_t^T \left( \nabla_i h^i(\bar{L}_s^n,\bar{\mathbf{A}}_s^n) - \nabla_i h^i(\bar{L}_s,\bar{\mathbf{A}}_s) \right)  ds \, \right|=0. 
\end{equation}
The latter, thanks to  (\ref{convunifoff}) and (\ref{convinmeasurelamda}) and to the continuity of $\nabla_i g^i$,  implies that, 
\begin{equation}\label{uniformconvergengeofYn} 
  \partial \bar{ C}^{i,n} \rightarrow \partial \bar{ C}^i \quad \text{uniformly on the interval } [0,T], \quad \text{for every } i=1,...,N, \quad \bar{\mathbb{Q}}\text{-a.s.}
\end{equation} 
The following claims summarize two key properties of the processes $\partial \bar{ C}^i$ and $\bar{\mathbf{A}}$ that will guarantee that $(\bar{\beta},\bar{\mathbf{A}})$ is a weak Nash equilibrium as in Definition  \ref{def_weak_nash}.
\smallbreak \noindent
For every $i=1,...,N$, we now prove that the following hold $\bar{\mathbb{Q}}$-a.s.:
\begin{enumerate}
\item[(2.a)]\label{2.a} $\partial \bar{ C}_t^{i}\geq 0$ for every $t \in [0,T]$;
\item[(2.b)]\label{2.b} $\displaystyle \int_{[0,T]} \partial \bar{ C}_t^i \, d \bar{A}_t^i=0$.
\end{enumerate} 
\noindent
\textit{(Proof of 2.a)}
We begin by proving that $\partial \bar{ C}^n \rightarrow \partial \bar{ C}$ in $\mathbb{L}^1( \bar{\mathbb{Q}} \otimes dt)$. For $i=1,...,N$, from the convergence proved in (\ref{uniformconvergengeofYn}) we have that $  \bar{\mathbb{Q}} \otimes dt$-a.e. $\partial \bar{ C}^{i,n}$ converges  to $\partial \bar{ C}^i$. Moreover, for $p>1$ as in  Assumption \ref{H2},  by the growth condition (\ref{H2:growth_h_and_g}) we easily find that 
\begin{align}\label{uniformintegrability}
\mathbb{E}^{\bar{\mathbb{Q}}} \bigg[ \sup_{ t \in [0,T]} |\partial \bar{ C}_t^{i,n}|^p \bigg] &\leq  \widetilde{C} \, \bigg(  1 + \mathbb{E}^{{\mathbb{P}}} [ |{\mathbf{A}}_T^n|^{\gamma_2 p}] + \mathbb{E}^{\mathbb{P}_0} \bigg[ \sup_{ t \in [0,T]} \left(   | {(\pi_L)}_t |^{\gamma_1 p}  + | {(\pi_f)}_t^{i} |^p \right) \bigg] \bigg),
\end{align}
for a suitable constant $\widetilde{C}$. Using then the integrability condition (\ref{H2:integrability.of.the.sup}) in Assumption \ref{H2} and the estimates (\ref{stimafondamental}) (recall that by assumption $\gamma_2 p<q$), we have 
\begin{equation}\label{uniformintY}
\sup_{n } \mathbb{E}^{\bar{\mathbb{Q}}} \bigg[ \sup_{ t \in [0,T]} |\partial \bar{ C}_t^{i,n}|^p \bigg] <\infty,
\end{equation}
which implies that the sequence $\partial \bar{ C}^{i,n}$ is uniformly integrable. From Theorem 6.25 at p. in \cite{Klenke13}, we deduce then that $\partial \bar{ C}^n \rightarrow \partial \bar{ C}$ in $\mathbb{L}^1( \bar{\mathbb{Q}} \otimes dt)$.
Now, from the second equation in (\ref{FOCn1}) in Lemma \ref{FOCn}, we find
\begin{equation*}
0=\lim_{n} \mathbb{E}^{\mathbb{P}} \left[ \int_0^T ({\partial C}_t^{i,n})^- \, dt \, \right]=\lim_{n} \mathbb{E}^{\bar{\mathbb{Q}}} \left[ \int_0^T (\partial \bar{ C}_t^{i,n})^- \, dt \, \right]=\mathbb{E}^{\bar{\mathbb{Q}}} \left[ \int_0^T (\partial \bar{ C}_t^{i})^- \, dt \, \right],
\end{equation*}
and by continuity of $\partial \bar{ C}^{i}$ we conclude that $\bar{\mathbb{Q}}$-a.s.
\begin{equation}\label{first}
\partial \bar{ C}_t^{i}\geq 0  	\, ,\ \forall \, t \in [0,T], \, \, \forall \, i=1,...,N.
\end{equation}
\vspace{0.15 cm}\noindent
\textit{(Proof of 2.b)}
Computations analogous to those employed in (\ref{uniformintegrability}) yield
\begin{equation}\label{uniformintegrability_Y^n.alpha_p}
\mathbb{E}^{\bar{\mathbb{Q}}}\bigg[ \sup_{ t \in [0,T] } |\partial \bar{ C}_t^{i,n}|^{\alpha p} \bigg] 
\leq \widetilde{C} \, \bigg( 1+ \mathbb{E}^{{\mathbb{P}}}[|{\mathbf{A}}_T^n|^{\alpha \gamma_2 p}]+\mathbb{E}^{\mathbb{P}_0} \bigg[  \sup_{ t \in [0,T] } \left(   | {(\pi_L)}_t |^{\alpha \gamma_1 p}  + | {(\pi_f)}_t^{i} |^{\alpha p} \right)\bigg] \bigg)	,
\end{equation} 
as well as,
\begin{equation}\label{integrability_Y^alpha_p}
\mathbb{E}^{\bar{\mathbb{Q}}}\bigg[ \sup_{ t \in [0,T]} |\partial \bar{ C}_t^{i}|^{\alpha p} \bigg] 
\leq \widetilde{C} \, \bigg( 1+ \mathbb{E}^{\bar{\mathbb{Q}}}[|\bar{\mathbf{A}}_T|^{\alpha \gamma_2 p}]+\mathbb{E}^{\mathbb{P}_0} \bigg[  \sup_{ t \in [0,T]} \left(   | {(\pi_L)}_t |^{\alpha \gamma_1 p}  + | {(\pi_f)}_t^{i} |^{\alpha p} \right)\bigg] \bigg).
\end{equation}
Now, the estimates (\ref{stimafondamental}), (\ref{stima.A}), (\ref{uniformintegrability_Y^n.alpha_p}) and (\ref{integrability_Y^alpha_p}) imply that
$$
\sup_n \mathbb{E}^{\bar{\mathbb{Q}}}\left[ \sup_{ t \in [0,T] } |\partial \bar{ C}_t^{i,n}|^{\alpha p} +\sup_{ t \in [0,T]} |\partial \bar{ C}_t^{i}|^{\alpha p} + |\bar{\mathbf{A}}_T^n|^{\frac{\alpha p }{p-1}} + |\bar{\mathbf{A}}_T|^{\frac{\alpha p }{p-1}} \right] < \infty, 
$$
which, together with the convergence established in (\ref{uniformconvergengeofYn}), allows us to use Lemma \ref{existenceofasubsequenceforlimitofcosts} in Appendix  \ref{App:MZ} in order to deduce that 
\begin{equation}\label{limit_Y^idA^i_in_proof}
\mathbb{E}^{\bar{\mathbb{Q}}} \bigg[ \int_{[0,T]} \partial \bar{ C}_t^i \, d \bar{A}_t^i \, \bigg] = \lim_n \mathbb{E}^{\bar{\mathbb{Q}}} \bigg[ \int_{[0,T]} \partial \bar{ C}_t^{i,n} \, d \bar{A}_t^{i,n} \, \bigg] = \lim_n \mathbb{E}^{\mathbb{P}} \bigg[ \int_0^T {\partial C}_t^{i,n} \, d{A}_t^{i,n} \, \bigg] \leq 0 ,
\end{equation}
where we have used the first equality of (\ref{FOCn1}) in Lemma \ref{FOCn} and that, for each $n \in \mathbb{N}$,  $\bar{A}_0^{i,n}=0$ $\bar{\mathbb{Q}}$-a.s. 
This implies, thanks to the non negativity of $\partial \bar{ C}^i$ established in (\ref{first}), that $\bar{\mathbb{Q}}$-a.s.
$$
\int_{[0,T]} \partial \bar{ C}_t^i d \bar{A}_t^i=0;
$$
i.e. (\emph{2.b}) is proved.
\smallbreak 
It does remain to conclude that the couple $(\bar{\beta},\bar{\mathbf{A}})$ is a weak Nash equilibrium of the game.
Fix $i\in\{1,...,N\}$, and consider an admissible strategy $B^i \in \mathcal{A}_{\bar{\beta}}$. By (\ref{erlemma0}) and Claims (\emph{2.a}) and (\emph{2.b}) we have
\begin{align*}
& \mathcal{J}_{\bar{\beta}}^i(B^i,\bar{A}^{-i})-\mathcal{J}_{\bar{\beta}}^i(\bar{A}^i,\bar{A}^{-i}) \geq \mathbb{E}^{\bar{\mathbb{Q}}} \bigg[ \int_{[0,T]}  \partial \bar{ C}_t^i (dB_t^i-d\bar{A}_t^i) \bigg]= \mathbb{E}^{\bar{\mathbb{Q}}} \bigg[ \int_{[0,T]}  \partial \bar{ C}_t^i dB_t^i \bigg]\geq 0,
\end{align*}
which in fact completes the proof.
\end{proof}


\subsection{On Lipschitz  $\varepsilon$-Nash Equilibria for the Monotone-Follower Game}
\label{sec:LipepsilonEq}

In this subsection we prove another connection between the Lipschitz games and the monotone-follower game by showing that $\varepsilon$-Nash equilibria of the monotone-follower game can be realized as Nash equilibria of the $n$-Lipschitz game, for $n$ sufficiently large. The proof of this result exploits Theorem \ref{th}, combined with a contradiction scheme.

As in Subsection \ref{subsection.Existence.Approximation.Weak.Nash.Equilibria}, in the following we consider fixed a basis $\beta=(\Omega, \mathcal{F},  \mathbb{P}, f, L)$, and,  for each $n \in \mathbb{N}$, let $\mathbf{A}^n=(A^{1,n},...,A^{N,n})$ be a Nash equilibrium of the $n$-Lipschitz game as in Theorem \ref{NashLipschitz}.

\begin{theorem}
\label{Existence_of_epsilon}
Suppose that Assumption \ref{H2} holds and that there exists a constant $C>0$ such that
\begin{equation}\label{growth_h_and_g_epsilon_Nash}
| h^i (l,a)|+|g^i (l,a)|   \leq C(1+|l|^{\gamma_1} + |a^{-i}|^{\gamma_2} ),
\end{equation}
for each $l \in \mathbb{R}^k$ and $a=(a^1,...,a^N) \in \mathbb{R}^{Nd}$.

Then, for each $\varepsilon>0$, there exists $n_\varepsilon$ such that the Nash equilibrium $\mathbf{A}^{n_\varepsilon}$ of the $n_\varepsilon$-Lipschitz game is an $\varepsilon$-Nash equilibrium of the monotone-follower game; that is, for each $i=1,...,N$
$$
\mathcal{J}_\beta^i(A^{i,n_\varepsilon},A^{-i,n_\varepsilon}) \leq \mathcal{J}_\beta^i(B^i,A^{-i,n_\varepsilon}) + \varepsilon \quad \text{for each} \quad B^i \in \mathcal{A}_{\beta}.
$$
\end{theorem}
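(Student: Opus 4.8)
The plan is to argue by contradiction, feeding a hypothetical sequence of profitable deviations into the compactness machinery of Theorem \ref{th} so as to produce a profitable deviation in a limiting weak Nash equilibrium, in violation of Definition \ref{def_weak_nash}. Suppose the statement is false. Then there is $\varepsilon_0>0$ such that, for every $n$, the profile $\mathbf{A}^n$ is not an $\varepsilon_0$-Nash equilibrium; hence for each $n$ there are an index $i_n$ and a strategy $B^{i_n}\in\mathcal{A}_\beta$ with $\mathcal{J}_\beta^{i_n}(A^{i_n,n},A^{-i_n,n})>\mathcal{J}_\beta^{i_n}(B^{i_n},A^{-i_n,n})+\varepsilon_0$. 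Since the players are finitely many, after passing to a subsequence (not relabelled) I may fix a single index $i$ and strategies $B^{i,n}\in\mathcal{A}_\beta$ such that $\mathcal{J}_\beta^{i}(A^{i,n},A^{-i,n})>\mathcal{J}_\beta^{i}(B^{i,n},A^{-i,n})+\varepsilon_0$ for all $n$. First I would bound the equilibrium costs uniformly in $n$: testing the $n$-Lipschitz optimality of $A^{i,n}$ against the null strategy $0\in\mathcal{L}(n)$ and using $h^i(l,0,a^{-i})\le H^i(l)$, $g^i(l,0,a^{-i})\le G^i(l)$ gives $\mathcal{J}_\beta^{i}(A^{i,n},A^{-i,n})\le\mathbb{E}^{\mathbb{P}}[\int_0^T H^i(L_t)\,dt+G^i(L_T)]<\infty$ by (\ref{H2:integrability}). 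Consequently $\mathcal{J}_\beta^{i}(B^{i,n},A^{-i,n})$ is bounded as well, and since $f_t^i\ge c$ by (\ref{f>c}) one has $\mathbb{E}^{\mathbb{P}}[\int_{[0,T]}f_t^i\,dB_t^{i,n}]\ge c\,\mathbb{E}^{\mathbb{P}}[|B_T^{i,n}|]$, whence $\sup_n\mathbb{E}^{\mathbb{P}}[|B_T^{i,n}|]<\infty$ and, by Lemma \ref{MeyerZheng}, the sequence $\{B^{i,n}\}$ is tight in $\mathcal{P}(\mathcal{D}_\uparrow^{d})$.

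Next I would rerun the argument of Theorem \ref{th} on the enlarged sequence $\{(f,L,\mathbf{A}^n,B^{i,n})\}_n$, whose tightness follows from the first claim of Theorem \ref{th} together with the tightness of $\{B^{i,n}\}$ just obtained. After a further subsequence and the Skorokhod representation (Theorem 3 in \cite{D}) on a space $(\bar\Omega,\bar{\mathcal{F}},\bar{\mathbb{Q}})$, I get $(\bar f^n,\bar L^n,\bar{\mathbf{A}}^n,\bar B^n)\to(\bar f,\bar L,\bar{\mathbf{A}},\bar B)$ with the convergences (\ref{convunifoff}) and (\ref{convinmeasurelamda}) holding, now also for $\bar B^n\to\bar B$ in the measure $dt+\delta_T$. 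By construction $\bar\beta:=(\bar\Omega,\bar{\mathcal{F}},\bar{\mathbb{Q}},\bar f,\bar L)$ is a basis, $(\bar\beta,\bar{\mathbf{A}})$ is exactly the weak Nash equilibrium of Theorem \ref{th}, and $\bar B\in\mathcal{A}_{\bar\beta}$ is an admissible deviation for player $i$.

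It then remains to pass to the limit in the two costs. For the running and terminal terms the decisive point is that (\ref{growth_h_and_g_epsilon_Nash}) bounds $h^i$ and $g^i$ by $C(1+|l|^{\gamma_1}+|a^{-i}|^{\gamma_2})$ \emph{uniformly in player $i$'s own control}; combined with the moment bounds (\ref{stimafondamental}) and (\ref{stima.A}) and the convergence (\ref{convinmeasurelamda}), this yields uniform integrability and thus convergence of these terms, both for $\bar{\mathbf{A}}^n$ and for $(\bar B^n,\bar A^{-i,n})$ (in the latter the bound does not see $\bar B^n$, which is why only a first moment on $B_T^{i,n}$ is needed). For the control cost I would argue pathwise: the $dt+\delta_T$-convergence of the nondecreasing paths upgrades, through Helly's selection principle, to weak convergence of the Stieltjes measures $d\bar A^{i,n}\to d\bar A^{i}$ and $d\bar B^n\to d\bar B$, so that the uniform convergence (\ref{convunifoff}) of $\bar f^n$ and $f^i\ge c$ give $\int_{[0,T]}\bar f_t^i\,d\bar A_t^{i,n}\to\int_{[0,T]}\bar f_t^i\,d\bar A_t^{i}$, and, by nonnegativity and Fatou's lemma, $\liminf_n\mathbb{E}^{\bar{\mathbb{Q}}}[\int_{[0,T]}\bar f_t^i\,d\bar B_t^{n}]\ge\mathbb{E}^{\bar{\mathbb{Q}}}[\int_{[0,T]}\bar f_t^i\,d\bar B_t]$. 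Altogether $\mathcal{J}_\beta^{i}(A^{i,n},A^{-i,n})\to\mathcal{J}_{\bar\beta}^{i}(\bar A^{i},\bar A^{-i})$ and $\liminf_n\mathcal{J}_\beta^{i}(B^{i,n},A^{-i,n})\ge\mathcal{J}_{\bar\beta}^{i}(\bar B,\bar A^{-i})$; taking $\liminf$ in the standing inequality yields $\mathcal{J}_{\bar\beta}^{i}(\bar B,\bar A^{-i})\le\mathcal{J}_{\bar\beta}^{i}(\bar A^{i},\bar A^{-i})-\varepsilon_0$, contradicting the weak Nash equilibrium property of $(\bar\beta,\bar{\mathbf{A}})$.

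I expect the main obstacle to be the control-cost limit under the Meyer--Zheng (pseudopath) convergence, which by itself does not entail weak convergence of the control measures: the monotonicity of the paths (which licenses Helly) and the uniform convergence and uniform positivity of $f$ are exactly what repair this. A related subtlety is that the deviations $B^{i,n}$ only inherit a first-moment bound from coercivity---not the higher moment $q$ available for the equilibria in (\ref{stimafondamental})---so for the deviation term I aim only at the lower semicontinuity furnished by Fatou, which is all the contradiction requires, rather than at full convergence.
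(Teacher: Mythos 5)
Your proposal is correct, and its skeleton is exactly the paper's: contradiction, reduction to a single deviating player, a uniform first-moment bound on the deviations obtained by testing against the null strategy and using $f^i\geq c$, tightness of the enlarged tuple $(f,L,\mathbf{A}^n,B^n)$, Skorokhod representation, limit passage, and finally a contradiction with the weak Nash property of the limit supplied by Theorem \ref{th}. Where you genuinely depart from the paper is in the treatment of the singular-control cost terms. The paper handles both of them with Lemma \ref{existenceofasubsequenceforlimitofcosts} (a Kabanov/Koml\'os-type convergence result): it applies the lemma directly to $\int \bar f^{i,n}\,d\bar A^{i,n}$, and for the deviation term it first truncates, setting $\bar B^{n,M}:=\bar B^n\wedge M$, applies the lemma to the bounded truncated sequence, and then lets $M\to\infty$ by monotone convergence --- truncation being forced precisely because $B^n$ only carries a first-moment bound, so the lemma's hypothesis \eqref{stima.su.tutto} fails for $B^n$ itself. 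You instead argue pathwise (monotonicity plus $dt+\delta_T$-convergence gives convergence of the distribution functions at continuity points and at $T$, hence Helly--Bray weak convergence of the Stieltjes measures, hence convergence of the integrals against the uniformly converging continuous $\bar f^{i,n}$), and then use Fatou for the deviation term, where indeed only lower semicontinuity is needed; this replaces the paper's truncation device by a more elementary one-sided argument and is a perfectly valid alternative. One point you should make explicit: for the equilibrium term, pathwise convergence of $\int \bar f^{i,n}\,d\bar A^{i,n}$ must still be upgraded to convergence of expectations, which requires uniform integrability; this follows from H\"older's inequality together with the moment bounds \eqref{stimafondamental} and \eqref{stima.f^n.f} (using $\alpha p/(p-1)\leq q$), and it is exactly this step that the paper's Lemma \ref{existenceofasubsequenceforlimitofcosts} packages for you --- Fatou alone would only give an inequality in the unhelpful direction there.
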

\begin{proof}
We argue by contraddiction and we suppose that the thesis is false. Then, there exists $\varepsilon>0$ such that, for each $n \in \mathbb{N}$, there exist $i_n\in\{1,...,N\}$ and an admissible strategy $B^n \in \mathcal{A}_{\beta}$ with 
$$
\mathcal{J}_\beta^{i_n}(\mathbf{A}^n) > \mathcal{J}_\beta^{i_n}(B^n,A^{-i_n,n}) + \varepsilon.
$$
Since the number of indexes of the players is finite,
we can suppose that there exists $i\in \{1,...,N\}$ such that, for each $n \in \mathbb{N}$, 
\begin{equation}\label{equazione_assurda!!!}
\mathcal{J}_\beta^{i}(\mathbf{A}^n) > \mathcal{J}_\beta^{i}(B^n,A^{-i,n}) + \varepsilon.
\end{equation}
Recall now that, for each $n \in \mathbb{N}$, $\mathbf{A}^n$ is a Nash equilibrium for the $n$-Lipschitz game and notice that the process constantly equal to zero is admissible. Hence, from (\ref{equazione_assurda!!!}), and using the coercivity condition (\ref{f>c}) and the integrability condition (\ref{H2:integrability}) in Assumption \ref{H2}, we find 
\begin{align*}
c\, \mathbb{E}^{\mathbb{P}}[|B_T^n|] \leq \mathcal{J}_\beta^{i}(B^n,A^{-i,n}) &< \mathcal{J}_\beta^{i}(\mathbf{A}^n)- \varepsilon \\
&\leq \mathcal{J}_\beta^i(0, A^{-i,n})  \leq \mathbb{E}^{\mathbb{P}_0}\left[ \int_0^T H^i((\pi_L)_t)\,dt + G^i((\pi_L)_T) \right] <\infty,
\end{align*}
which implies that
\begin{equation}\label{tightness_of_B}
\sup_{n} \mathbb{E}^{\mathbb{P}} [|B_T^n|] < \infty.
\end{equation}

With arguments analogous to those employed in the proof of \emph{Claim 1} of Theorem \ref{th}, from the tightness condition (\ref{tightness_of_B}) we deduce that there exists a subsequence of indexes (still denoted by $n$) and a probability measure $\tilde{\mathbb{P}}\in \mathcal{P}( \mathcal{C}_+^{Nd} \times \mathcal{D}^{k} \times \mathcal{D}_{\uparrow}^{(1+N) d} )$ such that the sequence $\mathbb{P} \circ (f,{L},{B}^n,\mathbf{A}^n)^{-1}$ converges weakly to $\tilde{\mathbb{P}}$.

Then, thanks again to an extension of Skorokhod's theorem (see Theorem 3 in \cite{D}), there exists a probability space $(\bar{\Omega},\bar{\mathcal{F}}, \bar{\mathbb{Q}})$, and, on it, a sequence
$
\{ (\bar{f}^n, \bar{L}^n,\bar{B}^n,\bar{\mathbf{A}}^n )\}_{n\in \mathbb{N}}
$
of $\mathcal{C}_+^{Nd} \times \mathcal{D}^{k} \times \mathcal{D}_{\uparrow}^{(1+N) d} $-valued random variables, and a $\mathcal{C}_+^{Nd} \times \mathcal{D}^{k} \times \mathcal{D}_{\uparrow}^{(1+N) d}$-valued random variable $(\bar{f},\bar{L},\bar{B},\bar{\mathbf{A}})$, such that  $\bar{\mathbb{Q}} \circ (\bar{f}^n, \bar{L}^n,\bar{B}^n,\bar{\mathbf{A}}^n)^{-1}=\tilde{\mathbb{P}}^n$ and  $\bar{\mathbb{Q}} \circ(\bar{f}, \bar{L},\bar{B},\bar{\mathbf{A}} )^{-1}=\tilde{\mathbb{P}}$.
 Furthermore, this representation is such that, for $\bar{\mathbb{Q}}$-almost all $\omega \in \bar{\Omega}$, we have
\begin{equation}\label{convunifoff_second}
\bar{f}^n(\omega)\rightarrow \bar{f}(\omega) \quad \text{uniformly on the interval } [0,T],
\end{equation}
as well as
\begin{equation}\label{convinmeasurelamda_seond}
( \bar{L}^n(\omega),\bar{B}^n(\omega),\bar{\mathbf{A}}^n(\omega) ) \rightarrow ( \bar{L}(\omega),\bar{B}(\omega),\bar{\mathbf{A}}(\omega) ) \quad \text{in the  measure $dt+\delta_T$ on $[0,T]$.}
\end{equation}

A rationale similar to that yelding (\ref{unifconvY}) can be employed to show that, $\bar{\mathbb{Q}}$-a.s.,
\begin{align}\label{limit_hg_epsilon}
\lim_n  & \int_0^T  h^i(\bar{L}_t^n,\bar{B}_t^n,  \bar{A}_t^{-i,n} )\, dt + g^i(\bar{L}_T^n, \bar{B}_T^n, \bar{A}_T^{-i,n} )  \\ \notag 
& =  \int_0^T h^i(\bar{L}_t, \bar{B}_t,, \bar{A}_t^{-i} )\, dt + g^i(\bar{L}_T, \bar{B}_T, \bar{A}_T^{-i} ) , \notag
\end{align}
where we have also used that  $h^i$ and $g^i$ are continuous.
Furthermore, thanks to the growth condition (\ref{growth_h_and_g_epsilon_Nash}), for $p>1$ as in Assumption \ref{H2}, we can find a suitable constant $\tilde{C}>0$ such that 
\begin{align}\label{uniform_integrability_epsilon}
\sup_n \, & \mathbb{E}^{\bar{\mathbb{Q}}} \bigg[ \bigg|  \int_0^T h^i(\bar{L}_t^n,\bar{B}_t^n,  \bar{A}_t^{-i,n} )\, dt + g^i(\bar{L}_T^n, \bar{B}_T^n, \bar{A}_T^{-i,n} ) \, \bigg|^p\, \bigg] \\ \notag
& \leq \tilde{C} \sup_n \,   \bigg( 1 + \mathbb{E}^{\mathbb{P}_0} \bigg[ \sup_{t\in [0,T]} |(\pi_L)_t|^{\gamma_1 p} \bigg] + \mathbb{E}^{\mathbb{P}} \left[ |\mathbf{A}_T^n|^{\gamma_2 p} \right] \bigg)	< \infty, 
\end{align}
where the integrability of the right-hand side follows from Condition (\ref{H2:integrability.of.the.sup}) and the estimate (\ref{stimafondamental}).
Finally, the limit in (\ref{limit_hg_epsilon}), together with the uniform integrability in (\ref{uniform_integrability_epsilon}), allows us to conclude that
\begin{align}\label{limit_integral_B}
\lim_n \mathbb{E}^{\bar{\mathbb{Q}}} & \left[ \int_0^T h^i(\bar{L}_t^n,\bar{B}_t^n,  \bar{A}_t^{-i,n} )\, dt + g^i(\bar{L}_T^n, \bar{B}_T^n, \bar{A}_T^{-i,n} ) \,\right] \\ \notag 
& = \mathbb{E}^{\bar{\mathbb{Q}}} \left[ \int_0^T h^i(\bar{L}_t, \bar{B}_t, \bar{A}_t^{-i} )\, dt + g^i(\bar{L}_T, \bar{B}_T, \bar{A}_T^{-i} ) \,\right]. \notag
\end{align}
With a similar reasoning we also find 
\begin{align}\label{limit_integral_A}
\lim_n \mathbb{E}^{\bar{\mathbb{Q}}} & \left[ \int_0^T h^i(\bar{L}_t^n, \bar{A}_t^{i,n}, \bar{A}_t^{-i,n} )\, dt + g^i(\bar{L}_T^n, \bar{A}_T^{i,n}, \bar{A}_T^{-i,n} ) \,\right] \\ \notag 
& = \mathbb{E}^{\bar{\mathbb{Q}}} \left[ \int_0^T h^i(\bar{L}_t, \bar{A}_t^i, \bar{A}_t^{-i} )\, dt + g^i(\bar{L}_T, \bar{A}_T^i, \bar{A}_T^{-i} ) \,\right]. \notag
\end{align}
Moreover,  Condition (\ref{H2:integrability.of.the.sup}) yields
\begin{equation}\label{stima.f^n.f}
    \sup_n \mathbb{E}^{\bar{\mathbb{Q}}} \bigg[  \sup_{t \in [0,T]} |\bar{f}_t^n|^{\alpha p} + \sup_{t \in [0,T]} |\bar{f}_t|^{\alpha p}    \bigg]= 2 \, \mathbb{E}^{\mathbb{P}_0}\bigg[ \sup_{t \in [0,T]} |{(\pi_f)}_t|^{\alpha p}  \bigg] < \infty.
\end{equation}
The latter, together with (\ref{stimafondamental}) and (\ref{stima.A}), allows to use Lemma \ref{existenceofasubsequenceforlimitofcosts} in Appendix \ref{App:MZ} in order to deduce that
\begin{equation*}
 \lim_n \mathbb{E}^{\bar{\mathbb{Q}}} \left[ \int_{0}^T \bar{f}_t^{i,n} \, d \bar{A}_t^{i,n} \, \right] =\mathbb{E}^{\bar{\mathbb{Q}}} \bigg[ \int_{[0,T]} \bar{f}_t^i \, d \bar{A}_t^i \, \bigg], 
\end{equation*}
which, together with (\ref{limit_integral_A}), gives
\begin{equation}\label{limitoffunctional_J}
\lim_n \mathcal{J}_\beta^{i}(\mathbf{A}^n) =\mathcal{J}_{\bar{\beta}}^i(\bar{A}^i,\bar{A}^{-i} ).
\end{equation}

Fix now $M \in \mathbb{N}$ and define the sequence of processes $\{ \bar{B}^{ n,M} \}_{n \in \mathbb{N}}$ by $\bar{B}_t^{n,M}:=\bar{B}_t^n \land M $ as well as the process $\bar{B}_t^M:=\bar{B}_t \land M$. Observe that, for each $n \in \mathbb{N}$, from (\ref{equazione_assurda!!!}) and the definition of $\bar{B}^{n,M}$ we have
\begin{equation}\label{equazione_assurda_M}
\mathcal{J}_{\bar{\beta}}^{i}(\bar{\mathbf{A}}^n) > \mathbb{E}^{\bar{\mathbb{Q}}}  \left[ \int_0^T h^i(\bar{L}_t^n, \bar{B}_t^n, \bar{A}_t^{-i,n} )\, dt + g^i(\bar{L}_T^n, \bar{B}_T^n, \bar{A}_T^{-i,n} ) \,\right] + \mathbb{E}^{\bar{\mathbb{Q}}}  \bigg[  \int_{[0,T]} \bar{f}_t^{i,n}\,d\bar{B}_t^{n,M} \bigg] + \varepsilon.
\end{equation}
Moreover, notice that the convergence established in (\ref{convinmeasurelamda_seond}) implies that,  $\bar{\mathbb{Q}}$-a.s., the sequence  $\{ \bar{B}^{n,M} \}_{n \in \mathbb{N}}$ converges to $\bar{B}^M$ in the measure $ dt + \delta_T $ on $[0,T]$.

Now, since the sequence $\{ \bar{B}^{ n,M} \}_{n \in \mathbb{N}}$ is bounded by the constant $M$, we can use again Lemma \ref{existenceofasubsequenceforlimitofcosts} in Appendix \ref{App:MZ} to deduce that
\begin{equation}\label{limit_M}
\lim_n \mathbb{E}^{\bar{\mathbb{Q}}}  \bigg[  \int_{[0,T]} \bar{f}_t^{i,n}\,d\bar{B}_t^{n,M} \bigg] = \mathbb{E}^{\bar{\mathbb{Q}}}  \bigg[  \int_{[0,T]} \bar{f}_t^{i}\,d\bar{B}_t^{M} \bigg]. 
\end{equation}
Hence, thanks to (\ref{limitoffunctional_J}), (\ref{limit_integral_B}) and (\ref{limit_M}), for each fixed $M$ we can pass to the limit in the inequality (\ref{equazione_assurda_M}), in order to obtain that
\begin{equation*}
\mathcal{J}_{\bar{\beta}}^{i}(\bar{\mathbf{A}}) \geq \mathbb{E}^{\bar{\mathbb{Q}}}  \left[ \int_0^T h^i(\bar{L}_t, \bar{B}_t, \bar{A}_t^{-i} )\, dt + g^i(\bar{L}_T, \bar{B}_T, \bar{A}_T^{-i} ) \,\right] + \mathbb{E}^{\bar{\mathbb{Q}}}  \bigg[  \int_{[0,T]} \bar{f}_t^{i}\,d\bar{B}_t^{M} \bigg] + \varepsilon.
\end{equation*}
Finally,  by the monotone convergence theorem, we can take the limit as $M\to \infty$ in the latter inequality to deduce that  
\begin{equation}\label{equazione_assurda_limite}
\mathcal{J}_{\bar{\beta}}^i(\bar{A}^i,\bar{A}^{-i} ) \geq \mathcal{J}_{\bar{\beta}}^i( \bar{B} , \bar{A}^{-i} ) + \varepsilon.
\end{equation}

On the other hand,  the probability measure $\bar{\mathbb{Q}} \circ(\bar{f}, \bar{L},\bar{\mathbf{A}} )^{-1}$ is an accumulation point of the sequence ${\mathbb{P}} \circ({f}, {L},{\mathbf{A}}^n )^{-1}$, and hence, by Theorem \ref{th}, the couple $(\bar{\beta}, \bar{\mathbf{A}})$ is a weak Nash equilibrium of the monotone-follower game, with $\bar{\beta}:=(\bar{\Omega},\bar{\mathcal{F}}, \bar{\mathbb{Q}},\bar{f},\bar{L})$. Moreover,  $\bar{B}$ is an admissible strategy associated to the basis $\bar{\beta}$; this implies that 
$$
\mathcal{J}_{\bar{\beta}}^i(\bar{A}^i,\bar{A}^{-i} ) \leq \mathcal{J}_{\bar{\beta}}^i( \bar{B} , \bar{A}^{-i} ),
$$
which, together with (\ref{equazione_assurda_limite}), leads  to a contradiction, and thus completes the proof. 
\end{proof}

\begin{remark}
Theorem \ref{Existence_of_epsilon} can also be understood in a different way. Fix a weak Nash equilibrium $(\bar{\beta},\bar{\mathbf{A}})$ which is an accumulation point of a sequence of Nash equilibria of the $n$-Lipschitz game on a fixed basis $\beta$, and define $$\mathcal{V}=(\mathcal{V}^1,...,\mathcal{V}^N):=(\mathcal{J}_{\bar{\beta}}^1(\bar{\mathbf{A}}),...,\mathcal{J}_{\bar{\beta}}^N(\bar{\mathbf{A}})).$$Then, $\mathcal{V}$ is a \emph{Nash equilibrium payoff} of the monotone-follower game (see, e.g., Definition 2.7 in \cite{Cardaliaguet2004}, or \cite{Lin2012}), in the sense that, for each $\varepsilon>0$, there exists $\mathbf{A}^{\varepsilon} \in \mathcal{A}_\beta^N$ such that, for each $i=1,...,N$, we have:
\begin{enumerate}
\item   $\mathcal{J}_\beta^i(A^{i,\varepsilon},A^{-i,\varepsilon}) \leq \mathcal{J}_\beta^i(B^i,A^{-i,\varepsilon}) + \varepsilon \quad$  for each$\quad B^i \in \mathcal{A}_\beta$;
\item $|\mathcal{J}_\beta^i(\mathbf{A}^\varepsilon) - \mathcal{V}^i| \leq \varepsilon$.
\end{enumerate}
Moreover, Theorem \ref{Existence_of_epsilon} shows that the Nash equilibrium payoff $\mathcal{V}$ is such that, for each $\varepsilon>0$, the profile strategy $\mathbf{A}^\varepsilon$, which satisfies the conditions of the definition above, can be chosen as a Nash equilibrium of the $n$-Lipschitz game, for $n$ large enough.
\end{remark}
\begin{remark}
Notice that the submodularity conditions (\ref{H1:2}) and (\ref{H2:3}) in Assumption \ref{H1} are not necessarily needed in the proof of Theorem \ref{th} and \ref{Existence_of_epsilon}. Indeed, only the requirement that, for each $n \in \mathbb{N}$, there exists a Nash equilibrium for the $n$-Lipschitz game is needed. 
The latter games can be seen as stochastic differential games, where the set of strategies is the set of progressively measurable stochastic processes $u^i:\Omega \times [0,T] \rightarrow [0,n]^d$, with degenerate dynamics $A_t^i= \int_0^t u_s^i ds$. This fact suggests that, whenever the submodularity requirement does not hold, one might exploit, on a case by case basis, existence results on equilibria for sochastic differential games (see, e.g., \cite{Carmona2016} and references therein for results on stochastic differential games).
\end{remark}
\end{section}


\begin{section}{Applications and Examples}
\label{Section.Application.Examples}


\subsection{Existence of Equilibria in a Class of Stochastic Differential Games.} 

This subsection is devoted to show that Theorem \ref{ExistenceNashEquilibrium} applies to deduce existence of open loop Nash equilibria in stochastic differential games with  singular controls, whenever a certain structure is preserved by the dynamics. For the sake of illustration, we propose the following model.

Fix a filtered probability space $(\Omega, \mathcal{F},\mathbb{F}, \mathbb{P})$ satisfying the usual conditions and consider on it $N$ standard $\mathbb{F}$-Brownian motions $W^i$. Suppose to be given, for $i=1,...,N$, measurable functions $g^i,h^i:\R^k \times \R^{N} \rightarrow \R$, as well as constants $\mu^i,\sigma^i \in \R$ and continuous $\mathbb{F}$-adapted stochastic processes $f^i: \Omega \times [0,T]  \rightarrow [0,\infty)$. Assume moreover to be given an $\mathbb{F}$-adapted process $L:[0,T]\times\Omega \rightarrow \mathbb{R}^k$ with c\`adl\`ag components. The set of admissible strategies $\mathcal{A}$ is defined  as the set of nondecreasing, nonnegative, c\`adl\`ag, $\mathbb{F}$-adapted, $\R$-valued stochastic processes, whereas $\mathcal{A}^N:=\bigotimes_{i=1}^N \mathcal{A}$ denotes the set of asmissible profile strategies.

We consider the $N$-player stochastic differential game of  singular controls in which, for $i=1,...,N$, player $i$  chooses an admissible strategy $\xi^i \in \mathcal{A}$ to control her private state, which evolves according to the stochastic differential equation 
\begin{equation}\label{SDE}
dX_t^i=\mu^i X_t^i\, dt + \sigma^i X_t^i \,dW_t^i+d\xi_t^i, \quad t\in [0,T], \quad X_{0-}^i=x_0^i>0,
\end{equation}
in order to minimize her expected cost
\begin{equation*}
\mathcal{J}^i(\xi^i,\xi^{-i}):= \mathbb{E} \bigg[ \int_0^T h^i(L_t,X_t^i,X_t^{-i}) dt + g^i(L_T,X_T^i,X_T^{-i})+\int_{[0,T]} f_t^i d\xi_t^i  \bigg].
\end{equation*}
Observe that, for $i=1,...,N$, the solution to equation (\ref{SDE}) is given by
\begin{equation}\label{rapr_of_X}
X_t^i=E_t^i \bigg[ x_0^i+ \int_{[0,t]} \frac{1}{E_s^i} d\xi_s^i \bigg]=E_t^i \left[ x_0^i+ \bar{\xi}_t^i \right]  ,
\end{equation}
where the processes $\{ E_t^i \}_{t \in [0,T]}$ and $\{ \bar{\xi}_t^i \}_{t \in [0,T]}$ are defined by
\begin{equation}\label{def_xi}
 E_t^i:=\exp \left[ \left( \mu^i-\frac{(\sigma^i)^2}{2} \right) t+\sigma^i W_t^i  \right] \quad \text{and} \quad \bar{\xi}_t^i:= \int_{[0,t]} \frac{1}{E_s^i} d\xi_s^i.
\end{equation}

\begin{assumption}\label{H_diff_game}
Let $h^i$ and $g^i$ satisfy Assumption \ref{H1}. Suppose moreover that: 
\begin{enumerate}
\item  for each $i=1,...,N$, there exist functions $\widetilde{H}^i,\widetilde{G}^i:\R^k \times \R \rightarrow [0,\infty)$ such that 
$$
h^i(l,x^i,x^{-i}) \leq  \widetilde{H}^i(l,x^i) \quad \text{and} \quad g^i(l,x^i,x^{-i}) \leq  \widetilde{G}^i(l,x^i), \quad \text{for each} \quad (l,x) \in \R^k \times \R^N,
$$
with 
$$
\mathbb{E} \left[ \int_0^T \widetilde{H}^i(L_t,x_0 E_t^i)\, dt  +  \widetilde{G}^i(L_T,x_0 E_T^i )   \right] < \infty;
$$
\item there exists a constant $k_1$ such that, for each $i=1,...,N$, we have $g^i(l,x) \geq k_1 x^i$ for each $(l,x) \in \R^k \times \R^N$.
\end{enumerate}
\end{assumption}

\begin{theorem}
\label{StochasticDifferentialGame}
Under Assumption \ref{H_diff_game}, there exists an open-loop Nash equilibrium of the previously introduced stochastic differential game.
\end{theorem}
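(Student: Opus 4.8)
The plan is to reduce the stochastic differential game to a submodular monotone-follower game of Section \ref{Section.Games.of.Monotone.Follower.Type} and then invoke Theorem \ref{ExistenceNashEquilibrium}. The representation (\ref{rapr_of_X})--(\ref{def_xi}) is the entry point: since $E^i_t>0$, the map $\xi^i\mapsto A^i:=\bar\xi^i$ is a bijection of $\mathcal A$ onto itself (with $dA^i=(E^i)^{-1}d\xi^i$ and inverse $d\xi^i=E^i\,dA^i$) preserving adaptedness, monotonicity and nonnegativity, and $X^i_t=E^i_t(x^i_0+A^i_t)$. Hence I would augment the signal to $\hat L:=(L,E^1,\dots,E^N)$, which is again c\`adl\`ag because the $E^i$ are continuous, set $\hat f^i:=f^iE^i$, and define
$$\hat h^i(\hat l,a):=h^i\big(l,(e^j(x^j_0+a^j))_{j=1}^N\big),\qquad \hat g^i(\hat l,a):=g^i\big(l,(e^j(x^j_0+a^j))_{j=1}^N\big),\quad \hat l=(l,e^1,\dots,e^N).$$
With this change of variables $\mathcal J^i(\xi^i,\xi^{-i})$ equals the monotone-follower functional in the controls $\mathbf A$ built from $\hat L,\hat f^i,\hat h^i,\hat g^i$, the control cost transforming as $\int f^i\,d\xi^i=\int f^iE^i\,dA^i=\int\hat f^i\,dA^i$. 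Since the bijection preserves each player's strategy set and payoff, Nash equilibria of the reduced game correspond exactly to open-loop equilibria of the original one.

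I would then check that $\hat h^i,\hat g^i$ inherit Assumption \ref{H1}. For fixed $\hat l$ and $a^{-i}$, the inner map $a^i\mapsto e^i(x^i_0+a^i)$ is affine with positive slope $e^i$, so strict convexity and lower semicontinuity pass to the composition; as $d=1$ here, the submodularity condition \ref{H1:3} in the scalar block $a^i$ is vacuous; and since $a\mapsto(e^j(x^j_0+a^j))_j$ is a coordinatewise strictly increasing order-isomorphism, it transports the decreasing-differences condition \ref{H1:2} of $h^i,g^i$ to $\hat h^i,\hat g^i$. The boundedness requirement (\ref{boundedness_of_h,g}) holds with $r^i(\mathbf A)\equiv0$: then $X^i=x^i_0E^i$ and Assumption \ref{H_diff_game}(1) bounds $\mathcal J^i(0,A^{-i})$ by $\mathbb E[\int_0^T\widetilde H^i(L_t,x^i_0E^i_t)\,dt+\widetilde G^i(L_T,x^i_0E^i_T)]<\infty$, uniformly in $A^{-i}$.

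The delicate point, and the main obstacle, is coercivity (\ref{coercivity_condition}). Since $h^i\ge 0$ and the control cost is nonnegative, Assumption \ref{H_diff_game}(2) gives $\mathcal J^i(\mathbf A)\ge k_1\,\mathbb E[X^i_T]=k_1\,\mathbb E[E^i_T(x^i_0+A^i_T)]$; using the adaptedness of $A^i$ and $\mathbb E[E^i_T\mid\mathcal F_t]=E^i_te^{\mu^i(T-t)}$, this becomes $\mathcal J^i(\mathbf A)\ge k_1e^{-|\mu^i|T}\,\mathbb E[\int_{[0,T]}E^i_s\,dA^i_s]=k_1e^{-|\mu^i|T}\,\mathbb E[\xi^i_T]$. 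Because $E^i_t$ is genuinely random and not bounded away from zero, this bound controls the \emph{expected fuel} $\mathbb E[\xi^i_T]$ rather than $\mathbb E[A^i_T]$, so the literal coercivity (\ref{coercivity_condition}) need not hold for the transformed game. Expected fuel is, however, precisely the quantity the proof of Theorem \ref{ExistenceNashEquilibrium} uses, and I would rerun that argument with the confining set $\mathcal A(w)$ replaced by $\{A\in\mathcal A:\mathbb E[\int_{[0,T]}E^i_s\,dA^i_s]\le w\}$: in the direct-method step Kabanov's compactness (Lemma 3.5 in \cite{K}) is applied to the increasing fuel processes $\int E^i\,dV^j$, whose terminal values are bounded in $\mathbb L^1$ by the estimate above, and the Ces\`aro limits are transported back to the $A$-variable via $A^i=\int(E^i)^{-1}\,d\xi^i$; the monotonicity of the best reply (submodularity) and the completeness of the lattice use only the order on $A$ and monotone convergence, and are unaffected by this reinterpretation. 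Theorem \ref{ExistenceNashEquilibrium} then yields a Nash equilibrium $\bar{\mathbf A}$ of the reduced game, and $\bar\xi^i:=\int_{[0,\cdot]}E^i_s\,d\bar A^i_s$ is the desired open-loop Nash equilibrium. Finally, one should note that the filtration generated by $(\hat f,\hat L)$ coincides with $\mathbb F$ whenever $\sigma^i\ne0$ (then $E^i$ recovers $W^i$), so no information is lost in the reduction.
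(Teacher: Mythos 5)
Your proposal is correct and, in its architecture, is the same as the paper's proof: the identical change of variables $\zeta^i=\bar{\xi}^i$ with $X^i_t=E^i_t(x^i_0+\zeta^i_t)$, the same transformed data $\bar{f}^i=f^iE^i$ and composed costs, the same verification that strict convexity, lower semicontinuity and decreasing differences are transported by the coordinatewise increasing affine map (with submodularity in $a^i$ indeed vacuous for $d=1$), the same choice $r^i\equiv 0$ for Condition (\ref{boundedness_of_h,g}), and the same back-transformation of the equilibrium. The one step where you genuinely deviate is the repair of coercivity, which both you and the paper correctly identify as the obstacle: the superlinearity $g^i(l,x)\ge k_1x^i$ of Assumption \ref{H_diff_game} only controls the weighted quantity $\mathbb{E}[E^i_T\zeta^i_T]$. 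The paper writes $\mathbb{E}[E^i_T\zeta^i_T]=\mathbb{E}[E^i_T]\,\mathbb{E}^{\tilde{\mathbb{P}}^i}[\zeta^i_T]$ for the equivalent measure $d\tilde{\mathbb{P}}^i=E^i_T\,d\mathbb{P}/\mathbb{E}[E^i_T]$ and invokes Theorem \ref{ExistenceNashEquilibrium} in the variant where the expectation in (\ref{coercivity_condition}) is taken under an equivalent probability measure --- a one-line modification, since equivalent measures share null sets and every step (Kabanov's lemma, lattice completeness, monotone convergence) runs verbatim under $\tilde{\mathbb{P}}^i$. You instead use the optional projection $\mathbb{E}[E^i_T\,|\,\mathcal{F}_s]=E^i_s e^{\mu^i(T-s)}$ to get coercivity in the expected fuel $\mathbb{E}[\int_{[0,T]}E^i_s\,d\zeta^i_s]$ and rerun the proof with a fuel-based confining set; this is essentially equivalent (the same projection identity shows the two functionals differ by factors between $e^{-|\mu^i|T}$ and $e^{|\mu^i|T}$), and your transport of Kabanov's Ces\`aro limits through the pathwise continuous, strictly positive weight $E^i$ is sound. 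One caveat on your claim that the lattice steps are ``unaffected'': the fuel functional $A\mapsto\mathbb{E}[\int E^i\,dA]$ is \emph{not} monotone along increasing sequences of controls (mass can shift toward times where $E^i$ is smaller), so in the analogue of Step 4 of Theorem \ref{ExistenceNashEquilibrium} the monotone-convergence argument must be replaced by pathwise weak convergence of the Stieltjes measures plus Fatou's lemma --- this still yields the confinement bound, so nothing breaks, but it does require a word. Net effect: same theorem, same reduction; the paper's change of measure is the tidier device, while yours avoids introducing new measures at the cost of re-opening the proof of Theorem \ref{ExistenceNashEquilibrium}.
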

\begin{proof}
Thanks to (\ref{rapr_of_X}), the cost functional of player $i$ can be rewritten in terms of  $\bar{\xi}^i$ (cf. (\ref{def_xi})), that is 
\begin{align}\label{expression_functional}
 \mathcal{J}^i(\xi^i,\xi^{-i}) &= \mathbb{E} \bigg[ \int_0^T h^i \left(L_t, E_t^i \left[ x_0^i+ \bar{\xi}_t^i \right],   \left\{ E_t^j \left[ x_0^j+ \bar{\xi}_t^j \right] \right\}_{j\ne i} \right) dt \\ \notag
& \quad \quad + g^i \left(L_T, E_T^i \left[ x_0^i+ \bar{\xi}_T^i \right] ,\left\{ E_T^j \left[ x_0^j+ \bar{\xi}_T^j \right] \right\}_{j\ne i} \right) + \int_{[0,T]} f_t^i E_t^i \,d\bar{\xi}_t^i  \bigg]. \notag
\end{align} 
This leads to define the new functions $\bar{h}^i,\bar{g}^i:\R^k \times (0,\infty)^N\times \R^N \rightarrow [0,\infty)$ by
$$
\begin{matrix}
& \bar{h}^i(l,e,z^i,z^{-i}):=h^i(l,e^i[x_0^i+z^i], \{ e^j[x_0^j+z^j] \}_{j \ne i} ) \\
& \bar{g}^i(l,e,z^i,z^{-i}):=g^i(l,e^i[x_0^i+z^i], \{ e^j[x_0^j+z^j] \}_{j \ne i} ), \\
\end{matrix}
$$
as well as the continuous processes $\bar{f}^i: \Omega \times [0,T] \rightarrow \R$ by $\bar{f}_t^i:=f_t^i \, E_t^i$. These definitions allows us to introduce new cost functionals in terms of new profile strategies  $\zeta=(\zeta^1,...,\zeta^N) \in \mathcal{A}^N$ setting
$$
\bar{\mathcal{J}}^i({\zeta}^i,{\zeta}^{-i}):= \mathbb{E} \bigg[ \int_0^T \bar{h}^i(L_t,E_t,\zeta_t^i,\zeta_t^{-i}) dt + \bar{g}^i(L_T,E_T,\zeta_T^i,\zeta_T^{-i})+\int_{[0,T]} \bar{f}_t^i  d\zeta_t^i  \bigg].
$$
Notice that, by (\ref{expression_functional}) and the definition of $\bar{\xi}^i$ in (\ref{def_xi})  as a function of $\xi^i$, we have that
$$
\bar{\mathcal{J}}^i(\bar{\xi}^i,\bar{\xi}^{-i})=\mathcal{J}^i(\xi^i,\xi^{-i}), \quad \forall \,\xi \in \mathcal{A}^N, \quad \forall \, i \in \{1,...,N\}.
$$
Furthermore, for each $\zeta \in \mathcal{A}^N$ there exists a unique $\xi \in \mathcal{A}^N$ such that $\zeta^i=\bar{\xi}^i$ for each $i\in\{1,...,N\}$. This means that solving the  stochastic differential game in the class of profile strategies $\xi \in \mathcal{A}$ and with cost  functionals $\mathcal{J}^i$ is equivalent to solve the monotone-follower game for $\zeta \in \mathcal{A}$ and cost functionals  $\bar{\mathcal{J}}^i$. 
The rest of the proof is then mainly devoted to show that the costs $\bar{h}^i$ and  $\bar{g}^i$, together with the processes $\bar{f}^i$, satisfy the conditions of Theorem \ref{ExistenceNashEquilibrium}.

Since the functions $h^i$ and $g^i$ satisfy Assumption \ref{H1}, for each $(l,e,z^{-i}) \in \R^k \times (0,\infty)^N \times \R^{N-1}$ the functions $\bar{h}^i(l,e,\cdot,z^{-i})$ and $\bar{g}^i(l,e,\cdot,z^{-i})$ are clearly continuous and strictly convex. Moreover, for $(l,e) \in \R^k \times (0,\infty)^N$ and $z,\bar{z} \in \R^N$ such that $z \leq \bar{z}$, we have $e^j[x_0^j+z^j] \leq e^j[x_0^j+\bar{z}^j]$ for each $j=1,...,N$, since the components of $e$ are positive. Therefore, because  $h^i$ has decreasing differences, we deduce that
\begin{align*}
\bar{h}^i(l,e,\bar{z}^i,z^{-i})&-\bar{h}^i(l,e,z^i,z^{-i}) \\
&= h^i(l,e^i[x_0^i+\bar{z}^i], \{ e^j[x_0^j+z^j] \}_{j \ne i} )-h^il,(e^i[x_0^i+z^i], \{ e^j[x_0^j+z^j] \}_{j \ne i} ) \\
& \geq h^i(l,e^i[x_0^i+\bar{z}^i], \{ e^j[x_0^j+\bar{z}^j] \}_{j \ne i} )-h^i(l,e^i[x_0^i+z^i], \{ e^j[x_0^j+\bar{z}^j] \}_{j \ne i} ) \\
&=\bar{h}^i(l,e,\bar{z}^i,\bar{z}^{-i})-\bar{h}^i(l,e,z^i,\bar{z}^{-i}),
\end{align*}
which means that $\bar{h}^i$ has decreasing difference as well. In the same way it is possible to show that $\bar{g}^i$ has decreasing differences, and this allows to conclude that the functions $\bar{h}^i$ and $\bar{g}^i$ satisfy Assumption \ref{H1}. Moreover, thanks to (1) in  Assumption \ref{H_diff_game}, Condition \ref{boundedness_of_h,g} is clearly satisfied  with $r^i(\zeta)=0$ for each $\zeta \in \mathcal{A}^N$. 

We prove now that the functionals $\bar{\mathcal{J}}^i$ satisfy a slightly different version of Condition \ref{coercivity_condition}. The superlinear condition (2) in Assumption \ref{H_diff_game} implies that
\begin{align*}
\bar{J}^i(\zeta^i, \zeta^{-i}) \geq & \mathbb{E}\left[  \bar{g}^i(L_T,\zeta_T^i, \zeta_T^{-i})  \right] =  \mathbb{E}\left[  g^i \left(L_T, E_T^i \left[ x_0^i+ {\zeta}_T^i \right] ,\left\{ E_T^j \left[ x_0^j+ {\zeta}_T^j \right] \right\}_{j\ne i} \right)  \right] \\
& \geq k_1  \mathbb{E}\left[  E_T^i \left[ x_0^i+ {\zeta}_T^i \right] \right] \geq  k_1  \mathbb{E}\left[  E_T^i \, {\zeta}_T^i \right] =  k_1 \, \mathbb{E}[  E_T^i ]  \,   \mathbb{E}^{\tilde{\mathbb{P}}^i}\left[  {\zeta}_T^i \right],
\end{align*}
where $\tilde{\mathbb{P}}^i$ is the probability measure on $(\Omega, \mathcal{F})$ given by  $$ d \tilde{\mathbb{P}}^i:= \frac{ E_T^i}{\mathbb{E}[  E_T^i ]}\, d \mathbb{P},$$ and equivalent to $\mathbb{P}$.

We can therefore apply Theorem \ref{ExistenceNashEquilibrium} (in fact a slightly different version of it, in which the expectation in Condition \ref{coercivity_condition} is replaced by the expectation under an equivalent probability measure)  to deduce existence of a Nash equilibrium $\hat{\zeta}=(\hat{\zeta}^1,..., \hat{\zeta}^N)$ of the monotone-follower game with cost functionals $\bar{\mathcal{J}}^i$. Hence the process $\hat{\xi}=(\hat{\xi}^1,..., \hat{\xi}^N)$ defined by 
$$
\hat{\xi}_t^i:= \int_{[0,t]} E_s^i\, d\hat{\zeta}_s^i
$$
is an open-loop Nash equilibrium of the stochastic differential game.
\end{proof}

\begin{remark}
The same arguments employed in the proof of Theorem \ref{StochasticDifferentialGame} apply if we replace the dynamics of the controlled geometric Brownian motion in (\ref{SDE}) by the dynamics of a controlled  Ornstein–Uhlenbeck process
$$
dX_t^i=\theta^i(\mu^i- X_t^i) \, dt + \sigma^i \,dW_t^i+d\xi_t^i, \quad t\in [0,T], \quad X_{0-}^i=x_0^i>0,
$$
for some parameters $\theta^i,\sigma^i>0$ and $\mu^i \in \R$. Mean-reverting dynamics (as the  Ornstein–Uhlenbeck one) find important application in the energy and commodity markets (see, e.g., \cite{Benth&Kholodnyi&Laurence14} or Chapter 2 in \cite{Lutz10}).
\end{remark}


\subsection{An Algorithm to Approximate the Least Nash Equilibrium} 

In this subsection we prove that, also in our setting, the algorithm introduced by Topkis (see Algorithm II in \cite{To}) for submodular games converges to the least Nash equilibrium of the game.  

According to the notation of Section 2, define the sequence of processes $\{ \mathbf{R}^n \}_{n \in \mathbb{N}} \subset \mathcal{A}^N$ in the following way: 
\begin{itemize}
    \item  $\mathbf{R}^0 = 0 \in \mathcal{A}^N$;
    \item  for each $n \geq 1$, set $\mathbf{R}^{n+1}:=\mathbf{R}(\mathbf{R}^n)$.
 \end{itemize}

\begin{theorem} 
Suppose that the assumptions of Theorem \ref{ExistenceNashEquilibrium} hold. Assume, moreover, that there exists a constant $C>0$ such that, for each $i=1,...,N$,
\begin{equation}\label{condition_algorithm}
h^i(l,a)+g^i(l,a)\leq C(1+|a|), \quad \forall \, (l,a) \in \R^k \times \R^{Nd} \quad \text{and} \quad |f_t^i| \leq C, \quad \forall \, t \in [0,T], \quad \mathbb{P}-a.s.
\end{equation}
Then the sequence $\{ \mathbf{R}^n \}_{ n \in \mathbb{N} }$ is monotone increasing in the lattice $(\mathcal{A}^N, \preccurlyeq^N )$ and it converges to the least Nash equilibrium of the game.
\end{theorem}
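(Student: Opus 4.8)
The plan is to recognize the iteration as the classical Kleene/Tarski ascent from the bottom element along the monotone best-reply map $\mathbf{R}$ of (\ref{brm}), the only new analytic content being that the supremum of the chain is again a fixed point. First I would prove monotonicity by induction. Since every admissible strategy has nonnegative components, $\mathbf{R}^0=0$ is the least element of $(\mathcal{A}^N,\preccurlyeq^N)$, so $\mathbf{R}^0\preccurlyeq^N\mathbf{R}^1$; and because $\mathbf{R}$ is increasing (Step~2 of the proof of Theorem \ref{ExistenceNashEquilibrium}), the implication $\mathbf{R}^{n-1}\preccurlyeq^N\mathbf{R}^n\Rightarrow\mathbf{R}^n=\mathbf{R}(\mathbf{R}^{n-1})\preccurlyeq^N\mathbf{R}(\mathbf{R}^n)=\mathbf{R}^{n+1}$ closes the induction. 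By Step~1 of that proof each $\mathbf{R}^n$ lies in the bounded set $\mathcal{A}(w)^N$ of (\ref{def.A(w)}), so I may define $\bar{\mathbf{A}}$ as the least upper bound of $\{\mathbf{R}^n\}$ in the complete lattice $(\mathcal{A}_\infty^N,\preccurlyeq^N)$ built in Step~3. Monotone convergence gives $\mathbb{E}[\bar{A}^i_T]=\lim_n\mathbb{E}[R^{i,n}_T]\le w<\infty$, whence $\bar{\mathbf{A}}\in\mathcal{A}(w)^N\subset\mathcal{A}^N$, and $\mathbf{R}^n_t\uparrow\bar{\mathbf{A}}_t$ $\mathbb{P}$-a.s.\ at every continuity point of $\bar{\mathbf{A}}$ and at $t=T$.

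Next I would show that $\bar{\mathbf{A}}$ is a Nash equilibrium by checking, for each $i$, that $\bar{A}^i$ minimizes $\mathcal{J}^i(\cdot,\bar{A}^{-i})$; strict convexity of the costs then forces $\bar{A}^i=R^i(\bar{\mathbf{A}})$, i.e.\ $\bar{\mathbf{A}}$ is a fixed point. Writing $R^{i,n+1}=R^i(\mathbf{R}^n)$ for the $i$-th component of $\mathbf{R}^{n+1}$, optimality gives $\mathcal{J}^i(R^{i,n+1},\mathbf{R}^{-i,n})\le\mathcal{J}^i(V,\mathbf{R}^{-i,n})$ for every $V\in\mathcal{A}$, and I would let $n\to\infty$. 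On the left, lower semicontinuity of $h^i,g^i$ (Assumption \ref{H1}) with Fatou's lemma yields $\mathcal{J}^i(\bar{A}^i,\bar{A}^{-i})\le\liminf_n\mathcal{J}^i(R^{i,n+1},\mathbf{R}^{-i,n})$; the singular term is treated by noting that the increasing measures $dR^{i,n+1}$ converge weakly to $d\bar{A}^i$ on $[0,T]$ (convergence of the distribution functions at continuity points and of the total masses $R^{i,n+1}_T\uparrow\bar{A}^i_T$), so that $\int_{[0,T]}f^i_t\,dR^{i,n+1}_t\to\int_{[0,T]}f^i_t\,d\bar{A}^i_t$ pathwise for the bounded continuous integrand $f^i$, with domination of the expectation provided by $|f^i|\le C$ from (\ref{condition_algorithm}) and $R^{i,n+1}_T\le\bar{A}^i_T\in\mathbb{L}^1$. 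On the right, since $V$ is fixed its own singular cost does not depend on $n$, and the linear growth in (\ref{condition_algorithm}) together with $\mathbf{R}^{-i,n}\preccurlyeq^N\bar{\mathbf{A}}^{-i}$ supplies the domination needed to pass $\mathcal{J}^i(V,\mathbf{R}^{-i,n})\to\mathcal{J}^i(V,\bar{A}^{-i})$ by dominated convergence. Combining the two limits gives $\mathcal{J}^i(\bar{A}^i,\bar{A}^{-i})\le\mathcal{J}^i(V,\bar{A}^{-i})$ for all $V$, as required.

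Finally I would identify $\bar{\mathbf{A}}$ as the \emph{least} equilibrium. By Theorem \ref{ExistenceNashEquilibrium} the set of Nash equilibria is a nonempty complete lattice, hence has a least element $\hat{\mathbf{A}}$; since $\mathbf{R}^0=0\preccurlyeq^N\hat{\mathbf{A}}$ and $\mathbf{R}$ is increasing, induction gives $\mathbf{R}^n=\mathbf{R}(\mathbf{R}^{n-1})\preccurlyeq^N\mathbf{R}(\hat{\mathbf{A}})=\hat{\mathbf{A}}$ for every $n$, so $\bar{\mathbf{A}}\preccurlyeq^N\hat{\mathbf{A}}$; being itself an equilibrium, $\bar{\mathbf{A}}$ must equal $\hat{\mathbf{A}}$, and the monotone chain converges to it. I expect the fixed-point step to be the main obstacle: the abstract ascent from the bottom only yields a fixed point when $\mathbf{R}$ is order-continuous along the chain, so the genuine work is the analytic passage to the limit in the cost functionals — above all controlling the singular control term $\int_{[0,T]}f^i_t\,dA^i_t$ under merely monotone (not uniform) convergence of the controls, which is exactly what the boundedness $|f^i|\le C$ and the linear growth of (\ref{condition_algorithm}) are there to guarantee.
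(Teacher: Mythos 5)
Your proposal is correct and follows essentially the same route as the paper's proof: monotone iteration via the increasing best-reply map, identification of the limit as the least upper bound in the complete lattice $(\mathcal{A}_\infty^N,\preccurlyeq^N)$ with $\mathbb{P}$-a.s.\ convergence at continuity points and at $T$, passage to the limit in the optimality inequality $\mathcal{J}^i(R^{i,n+1},\mathbf{R}^{-i,n})\le\mathcal{J}^i(V,\mathbf{R}^{-i,n})$ using (\ref{condition_algorithm}), and an induction showing the limit lies below every Nash equilibrium. The only cosmetic difference is that you split the limit passage into Fatou/lower semicontinuity on the left and weak convergence of the measures $dR^{i,n+1}_t$ for the singular term, whereas the paper invokes dominated convergence throughout; both arguments rest on the bounds in (\ref{condition_algorithm}) in exactly the same way.
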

\begin{proof}
Since the map $\mathbf{R}:\mathcal{A}^N\rightarrow \mathcal{A}^N$ is increasing (cf.\ \emph{Step 2}  in the proof of Theorem \ref{ExistenceNashEquilibrium}), the sequence $\{ \mathbf{R}^n \}_{ n \in \mathbb{N} }$ is clearly monotone increasing with respect to the order relation in $\mathcal{A}^N$. 

Define now the process $\mathbf{S}:=(S^1,...,S^N) \in \mathcal{A}_\infty^N$ as the least upper bound of the sequence $\{ \mathbf{R}^n \}_{ n \in \mathbb{N} }$ in the lattice $(\mathcal{A}_\infty^N, \preccurlyeq^N )$. 
Recall the construction of $\mathbf{S}$ and $\tilde{\mathbf{S}}$ (cf.\ (\ref{sup_count}) and (\ref{least_upper_bound}) in \emph{Step 3} in the proof of Theorem \ref{ExistenceNashEquilibrium}). Notice that, since the sequence $\{ \mathbf{R}^n \}_{ n \in \mathbb{N} }$ is increasing in the lattice  $(\mathcal{A}^N,\preccurlyeq^N)$,  there exists a $\mathbb{P}$-null set $\mathcal{N}$ such that
$$ 
\tilde{\mathbf{S}}_q(\omega)=\lim_{n  } \mathbf{R}_q^n (\omega)=\sup_{n } \mathbf{R}_q^n (\omega), \quad \forall \, q \in Q:=([0,T] \cap \mathbb{Q}) \cup \{ T \}, \quad \forall \, \omega \in \Omega \setminus \mathcal{N}.
$$
Take now $\bar{t} \in (0,T)$ and $\omega \in \Omega \setminus \mathcal{N}$. If $\mathbf{R}_{\bar{t}}^n(\omega)$ does not converge to $\mathbf{S}_{\bar{t}}(\omega)$, then we find $\varepsilon>0$ such that,
$$
\tilde{\mathbf{S}}_q(\omega) + \varepsilon = \sup_{n } \mathbf{R}_q^n(\omega) + \varepsilon \leq \sup_{n} \mathbf{R}_{\bar{t}}^n(\omega) + \varepsilon \leq \mathbf{S}_{\bar{t}}(\omega). 
$$
for each $ q \in Q$ such that $q <\bar{t}$.
This implies that
$
\mathbf{S}_{{\bar{t}}-}(\omega) +\varepsilon \leq \mathbf{S}_{\bar{t}}(\omega)$, which means that ${\bar{t}}$ is in the set $\mathcal{I}(\omega)$ of discontinuity points of $\mathbf{S}(\omega)$.
Thus, we conclude that there exists a $\mathbb{P}$-null set $\mathcal{N}$ such that,
\begin{equation}\label{limit_of_R}
\mathbf{S}_t(\omega)=\lim_{n} \mathbf{R}_t^n(\omega) \quad \forall \, t \in [0,T]\setminus \mathcal{I}(\omega), \quad \forall \, \omega \in \Omega \setminus  \mathcal{N},
\end{equation}
since, for each $\omega \in \Omega \setminus \mathcal{N}$, the latter convergence is verified in $T$ by the definition of $\mathbf{S}_T$. 

We next show that the limit point $\mathbf{S}$ is a Nash equilibrium. By \emph{Step 1} in the proof of Theorem \ref{ExistenceNashEquilibrium}, we know that there exists a suitable constant $\widetilde{C}$ such that, for each $n \in \mathbb{N}$,  $\mathbb{E} [|\mathbf{R}_T^n|] \leq \widetilde{C}$. Hence, by the monotone convergence theorem, we deduce that
\begin{equation}\label{bound_for_S_T}
\mathbb{E}[|\mathbf{S}_T|] \leq \widetilde{C},
\end{equation}
which in turn implies that $\mathbf{S} \in \mathcal{A}^N.$
Fix then $i \in \{1,...,N\}$ and  $B^i \in \mathcal{A}$. If $\mathbb{E}[|B_T^i|]= \infty$, then, by the coercivity condition (\ref{coercivity_condition}), we would  automatically have
$
\mathcal{J}^i(S^i, S^{-i}) \leq \mathcal{J}^i(B^i, S^{-i})= \infty. 
$
Hence, without loss of generality, we can assume that 
\begin{equation}\label{B-bounded}
\mathbb{E}[|B_T^i|] < \infty.
\end{equation}
Now, since $R^{i,n+1}$ minimizes $\mathcal{J}^i(\cdot, R^{-i,n})$,  for each $n \in \mathbb{N}$ we can write
\begin{align*}
\mathbb{E} & \bigg[ \int_0^T h^i(L_t, R_t^{i,n+1},R_t^{-i,n})\, dt  + g^i(L_T,R_T^{i,n+1},R_T^{-i,n}) + \int_{[0,T]} f_t^i \, dR_t^{i,n+1} \bigg] \\
&\leq \mathbb{E} \bigg[ \int_0^T h^i(L_t, B_t^i,R_t^{-i,n})\, dt  + g^i(L_T,B_T^i,R_T^{-i,n}) + \int_{[0,T]} f_t^i \, dB_t^i \bigg].
\end{align*}
Moreover, the limit in (\ref{limit_of_R}), together with conditions (\ref{condition_algorithm}) and the estimates (\ref{bound_for_S_T}) and (\ref{B-bounded}), allows us to invoke the dominated convergence theorem and to take the limit as $n$ goes to infinity in the last inequality in order to deduce that $\mathcal{J}^i(S^i, S^{-i})\leq\mathcal{J}^i(B^i, S^{-i})$. Hence $\mathbf{S}$ is a Nash equilibrium.

Finally, we prove that $\mathbf{S}$ is the least Nash equilibrium. Suppose that $\bar{\mathbf{S}}$ is another Nash equilibrium. By definition we have $\mathbf{R}^0 = 0 \preccurlyeq^N \bar{\mathbf{S}}$. If, for an arbitrary $n \in \mathbb{N}$, we have $\mathbf{R}^n \preccurlyeq^N \bar{\mathbf{S}}$, then, since the map $\mathbf{R}$ is increasing and $\bar{\mathbf{S}}$ is a fixed point of $\mathbf{R}$, we have $\mathbf{R}^{n+1} = \mathbf{R}(\mathbf{R}^n) \preccurlyeq^N \mathbf{R}(\bar{\mathbf{S}})=\bar{\mathbf{S}}$. Hence, by induction, we deduce that $\mathbf{R}^n \preccurlyeq^N \bar{\mathbf{S}}$ for each $n \in \mathbb{N}$, which in turn implies that $\mathbf{S} \preccurlyeq^N \bar{\mathbf{S}}$, since $\mathbf{S}$ is the least upper bound of the sequence $\{ \mathbf{R}^n \}_{n \in \mathbb{N} }$.
\end{proof}
\end{section}

\appendix

\section{Meyer-Zheng Convergence}
\label{App:MZ}
In this appendix we recall some fact about the so-called Meyer-Zheng topology (see \cite{MZ}) and we provide some results concerning the tightness of c\`adl\`ag processes in such a topology.
\smallbreak
\emph{Pseudopath topology.} Recall that we have defined (cf.\ Subsection \ref{subsection.Existence.Approximation.Weak.Nash.Equilibria}) the pseudopath topology $\tau_{pp}^{\text{\emph{\tiny T}}}$ on the space $\mathcal{D}^m$ as the topology induced by the convergence in the measure $dt+\delta_T$ on the interval $[0,T]$, where $dt$ denotes the Lebesgue measure and $\delta_T$ denotes the Dirac measure at the terminal point $T$. Notice that we introduce the pseudo-path topology through its characterization proved in Lemma 1 in \cite{MZ}.
 Observe that the topology  $\tau_{pp}^{\text{\emph{\tiny T}}}$ is metrizable. If $\{ x^n \}_{n \in \mathbb{N}}$ is a sequence of functions in $ \mathcal{D}^m$ converging to a function $x\in \mathcal{D}^m$ in the pseudopath topology $\tau_{pp}^{\text{\emph{\tiny T}}}$, then we have that (see, e.g., Appendix A.3. at p. 116 in \cite{LZ})
\begin{equation}\label{charatconv}
\lim_n \int_0^T \phi(s,x_s^n) \, ds = \int_0^T \phi(s,x_s) \, ds, \quad \text{and} \quad \lim_n x_T^n=x_T,
\end{equation}
for each bounded continuous function $\phi:[0,T] \times	\mathbb{R}^{m} \rightarrow \mathbb{R}$.
\smallbreak
\emph{Meyer-Zheng topology and tightness criteria.}
The \emph{Meyer-Zheng topology} on $\mathcal{P}(\mathcal{D}^m)$ is the topology of weak convergence of probability measures on the topological space $(\mathcal{D}^m, \tau_{pp}^{\text{\emph{\tiny T}}})$.

For a given filtered probability space $(\Omega,\mathcal{F},\mathbb{F},\mathbb{P})$ consider a c\`adl\`ag process $X: \Omega \times [0,T] \rightarrow \mathbb{R}^m$, and consider the \emph{conditional variation} of $X$ over the interval $[0,T]$, defined as
\begin{equation}\label{conditionalvariation_T}
V_T^{\mathbb{P}}(X):= \sup \sum_{i=1}^n \mathbb{E}\left[ \left|  \mathbb{E}[X_{t_i}-X_{t_{i-1}}|\mathcal{F}_{t_{i-1}} ] \right| \right] + \mathbb{E}[|X_{t_n}|],
\end{equation}
where the supremum is taken over all the partitions $0=t_0<...<t_n \leq T$, $n\in \mathbb{N}$.

We finally prove, for the sake of completeness, a slightly different version of the classical Meyer-Zheng tightness criterion (see Theorem 4 at p.\ 360 in \cite{MZ}), that is useful in many occasions during our study. 
Notice that, differently to Theorem 34 at p.\ 116 in \cite{LZ}, the next lemma allows us to handle a stochastic cost of control $f$.
\begin{lemma}\label{MeyerZheng} The following tightness criteria hold true.
\begin{enumerate}
\item Let $\{ X^n \}_{n\in \mathbb{N}}$ be a sequence of $\mathbb{R}^m$-valued c\`adl\`ag processes defined  on $[0,T]$ such that
$$
\sup_n V_T^{\mathbb{P}}(X^n)<\infty.
$$
Then $\{\mathbb{P} \circ X^n \}_{n\in \mathbb{N}}$ is tight in $ \mathcal{P}(\mathcal{D}^m)$.
\item Let $\{ X^n \}_{n\in \mathbb{N}}$ be a sequence of nondecreasing, nonnegative, $\mathbb{R}^m$-valued c\`adl\`ag processes defined on $[0,T]$ such that
$$
 \sup_n \mathbb{E} [|X_T^n|]<\infty.
$$
Then $\{\mathbb{P} \circ X^n \}_{n\in \mathbb{N}}$ is tight in $\mathcal{P}( \mathcal{D}_{\uparrow}^m)$.
\end{enumerate}
\end{lemma}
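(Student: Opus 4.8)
The plan is to reduce the second claim to the first, and to prove the first claim by adapting the pseudopath argument of \cite{MZ} to the terminal-time–augmented topology $\tau_{pp}^{\text{\emph{\tiny T}}}$.

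For the first claim I would follow the strategy of Meyer and Zheng, whose two quantitative inputs are both furnished by the hypothesis $\sup_n V_T^{\mathbb{P}}(X^n)<\infty$. First, since the definition (\ref{conditionalvariation_T}) contains the term $\mathbb{E}[|X_{t_n}|]$, testing against the partition $0<t$ yields the uniform marginal bound $\sup_{t}\mathbb{E}[|X_t^n|]\le V_T^{\mathbb{P}}(X^n)$, so by Markov's inequality no mass escapes to infinity. Second, a conditional Doob-type upcrossing inequality shows that, for any levels $a<b$, the expected number of upcrossings of $[a,b]$ by $X^n$ is controlled by $V_T^{\mathbb{P}}(X^n)/(b-a)$ up to additive constants; this rules out accumulation points that fail to be genuine càdlàg paths. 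Embedding each path via $x\mapsto$ (the image of $dt$ under $t\mapsto x_t$) into the compact space of pseudopaths over $[0,T]\times\overline{\mathbb{R}}^m$ and invoking these two estimates gives relative compactness for the convergence in $dt$-measure. The extra $\delta_T$ component of $\tau_{pp}^{\text{\emph{\tiny T}}}$ is handled separately: the marginal bound gives $\sup_n\mathbb{E}[|X_T^n|]<\infty$, hence $\{\mathbb{P}\circ(X_T^n)^{-1}\}$ is tight in $\mathbb{R}^m$, and combining this with the $dt$-part yields tightness of $\{\mathbb{P}\circ X^n\}$ in $(\mathcal{D}^m,\tau_{pp}^{\text{\emph{\tiny T}}})$.

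For the second claim I would first verify that monotonicity reduces it to the first. For a nonnegative nondecreasing càdlàg $X$ and any partition $0=t_0<\dots<t_n\le T$, the increments satisfy $X_{t_i}-X_{t_{i-1}}\ge 0$, so $\mathbb{E}[X_{t_i}-X_{t_{i-1}}\mid\mathcal{F}_{t_{i-1}}]\ge 0$ and $|\mathbb{E}[X_{t_i}-X_{t_{i-1}}\mid\mathcal{F}_{t_{i-1}}]|\le \mathbf{1}\cdot\mathbb{E}[X_{t_i}-X_{t_{i-1}}\mid\mathcal{F}_{t_{i-1}}]$. Summing and applying the tower property telescopes the first term of (\ref{conditionalvariation_T}) to at most $\mathbb{E}[\mathbf{1}\cdot X_{t_n}]\le c_m\,\mathbb{E}[|X_T|]$ by monotonicity, while the second term is bounded by $\mathbb{E}[|X_T|]$; thus $V_T^{\mathbb{P}}(X)\le C_m\,\mathbb{E}[|X_T|]$ for a dimensional constant $C_m$. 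Consequently $\sup_n V_T^{\mathbb{P}}(X^n)\le C_m\sup_n\mathbb{E}[|X_T^n|]<\infty$, and the first claim gives tightness in $\mathcal{P}(\mathcal{D}^m)$. Since each $X^n$ is $\mathcal{D}_\uparrow^m$-valued and $\mathcal{D}_\uparrow^m$ is a closed subset of $(\mathcal{D}^m,\tau_{pp}^{\text{\emph{\tiny T}}})$, intersecting the compact sets provided by tightness with $\mathcal{D}_\uparrow^m$ produces compacts of $\mathcal{D}_\uparrow^m$ carrying the same mass, which upgrades the conclusion to tightness in $\mathcal{P}(\mathcal{D}_\uparrow^m)$.

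I expect the main obstacle to be the first claim, specifically establishing the conditional upcrossing inequality for processes of finite conditional variation and checking that the terminal-time convergence is compatible with the compactness of the pseudopath space; once these are in place, the reduction and closed-subset argument of the second claim are routine.
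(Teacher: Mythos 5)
Your treatment of claim (2) is correct and essentially complete: for a nonnegative nondecreasing process the conditional increments are nonnegative, so the tower property telescopes the first term of (\ref{conditionalvariation_T}) and yields $V_T^{\mathbb{P}}(X)\le (1+\sqrt{m})\,\mathbb{E}[|X_T|]$, and the closed-subset argument for $\mathcal{D}_\uparrow^m$ inside $(\mathcal{D}^m,\tau_{pp}^{\text{\tiny T}})$ then upgrades tightness in $\mathcal{P}(\mathcal{D}^m)$ to tightness in $\mathcal{P}(\mathcal{D}_\uparrow^m)$. This is exactly the reduction the paper leaves implicit behind the words ``analogous rationale,'' and you spell it out correctly.

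The gap is in claim (1). What you propose there is not a proof but a plan to re-derive Theorem 4 of \cite{MZ} from scratch on $[0,T]$ with the measure $dt+\delta_T$: the conditional (quasimartingale) upcrossing inequality, the compactness of the pseudopath space together with the identification of limit points as genuine c\`adl\`ag paths, and --- the step you treat as routine --- the gluing of the $dt$-part with the $\delta_T$-part. That last step is in fact where the difficulty concentrates: a set of the form (compact for the $dt$-pseudopath convergence) $\cap\,\{x : x_T\in B\}$ is not obviously compact in $(\mathcal{D}^m,\tau_{pp}^{\text{\tiny T}})$, since for a sequence in it you must produce a \emph{single} c\`adl\`ag limit path whose restriction to $[0,T)$ is the $dt$-limit and whose terminal value is the limit of the terminal values, and you must check that this glued path still belongs to the set; this is precisely the identification problem your upcrossing estimates are meant to solve, so nothing here is ``handled separately.'' The paper avoids all of this with a soft trick: the extension map $\Psi$ of (\ref{extension.map}), sending $x$ to the path on $[0,\infty)$ constantly equal to $x_T$ after $T$, is a homeomorphism from $(\mathcal{D}^m,\tau_{pp}^{\text{\tiny T}})$ onto the closed subset $\widetilde{\mathcal{D}}^m[0,\infty)$ of the pseudopath space over $[0,\infty)$ --- the $\delta_T$-component of $\tau_{pp}^{\text{\tiny T}}$ is encoded by the behaviour of the extended path on $[T,\infty)$, which has positive $e^{-t}dt$-measure. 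Since extending constantly does not increase conditional variation, Theorem 4 of \cite{MZ} applies verbatim to $\{\Psi(X^n)\}_{n\in\mathbb{N}}$, and tightness is pulled back through $\Psi$ using closedness of $\widetilde{\mathcal{D}}^m[0,\infty)$. To make your proof complete you should either carry out in full the quasimartingale and compactness arguments you only cite (a substantial re-derivation of \cite{MZ}), or adopt this reduction, which requires nothing beyond the two soft facts that $\Psi$ is a homeomorphism and that its image is closed.
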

\begin{proof} We will prove only the claim (1), since the proof of claim (2) follows by an analogous rationale. 

Let $\mathcal{D}^m[0,\infty)$ be the space of $\mathbb{R}^m$-valued c\`adl\`ag functions on $[0,\infty)$, with the Borel $\sigma$-algebra generated by the Skorokhod topology. On the half line $[0,\infty)$, consider the measure $\lambda $ given by $d\lambda:=e^{ -t} dt$.
 On $\mathcal{D}^m[0,\infty)$ consider the \emph{pseudopath topology} $\tau_{pp}$; that is, the topology  induced by the convergence in the measure $\lambda$ on the interval $[0,\infty)$. 
Define, moreover, the space $\widetilde{\mathcal{D}}^m[0,\infty)$ as the set of elements of $\mathcal{\mathcal{D}}^m[0,\infty)$ which are constant on $[T,\infty)$, and notice that $\widetilde{\mathcal{D}}^m[0,\infty)$ is a closed subset of $\mathcal{\mathcal{D}}^m[0,\infty)$. Also, observe that the \emph{extension map} $\Psi:\mathcal{D}^m \rightarrow  \widetilde{\mathcal{D}}^m[0,\infty)$, defined by \begin{equation}\label{extension.map} \Psi(x)_t:=\begin{cases}
x_t & \text{if} \quad t \in [0,T] \\
x_T & \text{if} \quad t \in (T,\infty),
\end{cases} \end{equation}
is an omeomorphism between the topological spaces $(\mathcal{D}^m,\tau_{pp}^{\text{\emph{\tiny T}}})$ and $(  \widetilde{\mathcal{D}}^m[0,\infty), \tau_{pp})$. 

Now, using the uniform boundedness of $V_T^{\mathbb{P}}(X^n)$, we notice that the sequence $\Psi(X^n)$ satisfies the requirement of Theorem 4 in \cite{MZ},  and, as shown in its proof, it follows that the sequence $\{\mathbb{P} \circ \Psi(X^n) \}_{n\in \mathbb{N}}$ is tight in $ \mathcal{P}(\mathcal{D}^m[0,\infty))$.
Furthermore, since  $\widetilde{\mathcal{D}}^m[0,\infty)$ is a closed subset of $\mathcal{\mathcal{D}}^m[0,\infty)$, we have that $\{\mathbb{P} \circ \Psi(X^n) \}_{n\in \mathbb{N}}$ is tight in $\mathcal{P}(\widetilde{\mathcal{D}}^m[0,\infty))$. 
Finally, since the map $\Psi$ is an omeomorphism, we conclude that the sequence $\{\mathbb{P} \circ X^n \}_{n\in \mathbb{N}}$ is tight in $\mathcal{P}( \mathcal{D}^m)$ in the Meyer-Zheng topology.
\end{proof}

We finally summarize in a lemma a result on the convergence of stochastic integrals.

\begin{lemma}
\label{existenceofasubsequenceforlimitofcosts}
Let $\{ F^n \}_{n \in \mathbb{N}}$ be a sequence of $\mathbb{R}^m$-valued continuous processes which converges $\mathbb{P}$-a.s.\ to an $\mathbb{R}^m$-valued continuous process $F$  uniformly on $[0,T]$. Let $\{ X^n \}_{n \in \mathbb{N}}$ be a sequence of nondecreasing, nonnegative, $\mathbb{R}^m$-valued c\`adl\`ag processes defined on $[0,T]$, which converges $\mathbb{P}$-a.s. to nondecreasing, nonnegative, $\mathbb{R}^m$-valued cadlag process $X$ in the pseudopath topology $\tau_{pp}^{\text{\tiny T}}$. Suppose, moreover, that there exists two constant $\alpha, p  >1 $ such that
\begin{equation}\label{stima.su.tutto}
\sup_n \mathbb{E}\left[ \sup_{t \in [0,T]} \left( |F_t^n|^{\alpha p} + |F_t|^{\alpha p} \right) + |X_T^n|^{ \frac{ \alpha p}{p-1}} + |X_T|^{ \frac{ \alpha p}{p-1}}    \right] < \infty.
\end{equation}
Then 
\begin{equation}\label{limitof_f_dA}
 \lim_n \mathbb{E} \bigg[ \int_{[0,T]} {F}_t^n \, d X_t^n \, \bigg] = \mathbb{E} \bigg[ \int_{[0,T]} {F}_t \, d X_t \, \bigg]. 
 \end{equation}
\end{lemma}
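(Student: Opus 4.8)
The plan is to establish the convergence \eqref{limitof_f_dA} in two stages: first prove that $\int_{[0,T]} F^n_t\,dX^n_t \to \int_{[0,T]} F_t\,dX_t$ holds $\mathbb{P}$-almost surely, and then upgrade this to convergence of the expectations by showing that the sequence $\big\{\int_{[0,T]} F^n_t\,dX^n_t\big\}_n$ is uniformly integrable. Throughout the first stage I would work on the event of full $\mathbb{P}$-measure on which $F^n \to F$ uniformly on $[0,T]$ and $X^n \to X$ in the pseudopath topology $\tau_{pp}^{\text{\tiny T}}$, and carry out a purely pathwise argument. The starting point is the decomposition
\begin{equation*}
\int_{[0,T]} F^n_t\,dX^n_t - \int_{[0,T]} F_t\,dX_t = \int_{[0,T]} (F^n_t - F_t)\,dX^n_t + \Big( \int_{[0,T]} F_t\,dX^n_t - \int_{[0,T]} F_t\,dX_t \Big).
\end{equation*}

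For the first term, since $X^n$ is nonnegative and nondecreasing its total mass on $[0,T]$ equals $X^n_T$, so $\big|\int_{[0,T]}(F^n_t-F_t)\,dX^n_t\big| \le \big(\sup_{t}|F^n_t - F_t|\big)\,|X^n_T|$; this tends to $0$ thanks to the uniform convergence $F^n \to F$ and to $X^n_T \to X_T$ (hence $\sup_n |X^n_T| < \infty$ pathwise), the latter being part of the pseudopath convergence \eqref{charatconv}. The second term is the main obstacle, as it amounts to passing to the limit in a Stieltjes integral whose integrator converges only in the weak pseudopath sense. To treat it I would integrate by parts: for $F$ of class $C^1$ and $X$ nondecreasing with $X_{0-}=0$ one has, component-wise, $\int_{[0,T]} F_t\,dX_t = F_T X_T - \int_0^T X_t\,F'_t\,dt$, the boundary contribution at $0$ cancelling precisely because of the convention $X_{0-}=0$ implicit in the symbol $\int_{[0,T]}$. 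Applying this to a smooth approximation $F^\varepsilon$ of $F$ (for fixed $\omega$, $F$ is continuous on the compact $[0,T]$ and hence uniformly approximable by polynomials), the contribution $F^\varepsilon_T X^n_T$ converges since $X^n_T \to X_T$, while $\int_0^T X^n_t\,(F^\varepsilon)'_t\,dt \to \int_0^T X_t\,(F^\varepsilon)'_t\,dt$ follows from \eqref{charatconv} applied to the bounded continuous function obtained by truncating $x\,(F^\varepsilon)'(t)$ on the range $\big[0,\sup_n X^n_T\big]$, which contains all the relevant values $X^n_t$ and $X_t$. A three-$\varepsilon$ argument, bounding $\big|\int_{[0,T]} (F-F^\varepsilon)\,dX^n\big|$ and $\big|\int_{[0,T]} (F-F^\varepsilon)\,dX\big|$ by $\varepsilon\,\sup_n|X^n_T|$, then gives the convergence of the second term and, combined with the first, the $\mathbb{P}$-a.s.\ convergence of $\int_{[0,T]} F^n\,dX^n$ to $\int_{[0,T]} F\,dX$.

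It remains to pass to the limit in expectation. For this I would invoke the moment bound \eqref{stima.su.tutto}: again using that $X^n$ is nondecreasing with total mass $X^n_T$, we have $\big|\int_{[0,T]} F^n_t\,dX^n_t\big| \le c\,\big(\sup_t |F^n_t|\big)\,|X^n_T|$ for a dimensional constant $c$, and Hölder's inequality with conjugate exponents $p$ and $p/(p-1)$ yields
\begin{equation*}
\mathbb{E}\Big[\big|\textstyle\int_{[0,T]} F^n_t\,dX^n_t\big|^{\alpha}\Big] \le c^{\alpha}\,\big(\mathbb{E}[(\sup_t|F^n_t|)^{\alpha p}]\big)^{1/p}\,\big(\mathbb{E}[|X^n_T|^{\alpha p/(p-1)}]\big)^{(p-1)/p}.
\end{equation*}
By \eqref{stima.su.tutto} both factors are bounded uniformly in $n$, so $\sup_n \mathbb{E}\big[\big|\int_{[0,T]} F^n\,dX^n\big|^{\alpha}\big] < \infty$ with $\alpha>1$, whence the sequence is uniformly integrable. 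Merging the $\mathbb{P}$-a.s.\ convergence with this uniform integrability, Vitali's convergence theorem delivers $L^1$-convergence and hence \eqref{limitof_f_dA}. The crux of the whole argument is thus the second term of the decomposition, namely transferring the weak pseudopath convergence of the integrators onto a Lebesgue integral in $dt$ through integration by parts and a smoothing of $F$; the remaining steps are routine applications of Hölder's inequality together with the uniform and pseudopath convergences.
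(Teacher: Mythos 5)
Your proposal is correct, but it follows a genuinely different route from the paper. The paper never establishes pathwise convergence of the integrals: it works at the level of expectations via a double subsequence argument, using the same H\"older bound you derive to get boundedness in $\mathbb{L}^\alpha(\mathbb{P})$, then reflexivity of $\mathbb{L}^\alpha$ to extract a weakly convergent subsequence of the random variables $\int_{[0,T]}F^{n_j}_t\,dX^{n_j}_t$, and finally Kabanov's Koml\'os-type lemma (Lemma 3.5 in \cite{K}) applied to the integrators: the Ces\`aro means $B^m=\frac{1}{m}\sum_{j\le m}X^{n_j}$ converge a.s.\ in the sense of \eqref{kabanov}, and the Ces\`aro limit $B$ is identified with $X$ by testing against $\mathcal{C}_c^\infty([0,T))$ functions, integrating by parts, invoking \eqref{charatconv} and the fundamental lemma of the calculus of variations. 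You instead prove the stronger statement that $\int_{[0,T]}F^n_t\,dX^n_t\to\int_{[0,T]}F_t\,dX_t$ holds $\mathbb{P}$-a.s.\ --- in effect showing that pseudopath convergence of uniformly bounded monotone paths, together with continuity of the integrand, forces weak convergence of the associated Stieltjes measures --- and then conclude by uniform integrability (your $\mathbb{L}^\alpha$ bound, $\alpha>1$) and Vitali. Your pathwise argument is sound: the integration by parts $\int_{[0,T]}F^\varepsilon_t\,dX_t=F^\varepsilon_TX_T-\int_0^TX_t(F^\varepsilon)'_t\,dt$ is exactly compatible with the paper's convention $\int_{[0,T]}f_t\,dA_t:=f_0A_0+\int_0^Tf_t\,dA_t$, the $\omega$-dependent truncation level in your application of \eqref{charatconv} is legitimate since that stage of the argument is carried out for fixed $\omega$, and $\sup_n|X^n_T|<\infty$ pathwise because $X^n_T\to X_T$. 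On balance, your route is more elementary and self-contained (no Koml\'os/Kabanov lemma, no weak $\mathbb{L}^\alpha$ compactness) and delivers more (a.s.\ and $\mathbb{L}^1$ convergence of the integrals rather than only convergence of their expectations); the paper's route, on the other hand, recycles machinery it has already deployed elsewhere (Kabanov's lemma also drives Step 1 of the proof of Theorem \ref{ExistenceNashEquilibrium}) and avoids having to verify any Helly--Bray-type pathwise statement.
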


\begin{proof} 
We will prove that for each subsequence of indexes there exists a further subsequence for which the limit in (\ref{limitof_f_dA}) holds true.

Consider then a subsequence of indexes (not relabeled). From Condition (\ref{stima.su.tutto}), Hölder's inequality with $p$ as in the assumptions easily reveals that
\begin{equation}\label{uniformintegraaplpha}
\sup_n \mathbb{E} \bigg[ \bigg| \int_{[0,T]} {F}_t^{n} d {X}_t^{n} \bigg|^\alpha \, \bigg]+\sup_n \mathbb{E} \bigg[ \bigg| \int_{[0,T]} {F}_t d {X}_t^{n} \bigg|^\alpha \, \bigg] < \infty.
\end{equation}
Since $\alpha>1$, by the reflexivity of $\mathbb{L}^\alpha(\mathbb{P})$, there exists a subsequence of indexes $n_j$ and a random variable $Z \in \mathbb{L}^\alpha(\mathbb{P})$, for which
\begin{equation}\label{limit_Z}
\lim_j \mathbb{E} \bigg[ \int_{[0,T]} {F}_t^{n_j} d {X}_t^{n_j} \bigg]=\lim_j \mathbb{E} \bigg[ \int_{[0,T]} {F}_t \,  d {X}_t^{n_j} \bigg]= \mathbb{E}[Z],
\end{equation}
where the equality of the two limits follows from the $\mathbb{P}$-a.s.\ uniform convergence of $F^n$ to $F$ and from the integrability condition (\ref{stima.su.tutto}). 

Next, since by Condition (\ref{stima.su.tutto}) the sequence $\{X_T^{n_j} \}_{j \in \mathbb{N} }$ is bounded in $\mathbb{L}^1(\mathbb{P})$, by Lemma 3.5 in \cite{K} there exist a nondecreasing, nonnegative, $\mathbb{R}^m$-valued c\`adl\`ag process $B$ defined on $[0,T]$ and a subsequence (not relabeled) of $\{ X^{n_j} \}_{j \in \mathbb{N} }$  such that, $\mathbb{P}$-a.s., 
\begin{equation}\label{kabanov}
\lim_m \int_{[0,T]} \varphi_t dB_t^m=\int_{[0,T]} \varphi_t dB_t  \quad \forall \, \varphi \in \mathcal{C}_b({[0,T]};\mathbb{R}^d) \quad \text{ and} \quad \lim_m B_T^m=B_T,
\end{equation}
where we have set, $\mathbb{P}$-a.s.
\begin{equation}\label{def_of_B^m}
B_t^m := \frac{1}{m}	\sum_{j=1}^m {X}_t^{n_j}, \quad\quad \forall	t \in {[0,T]}.
\end{equation}
Moreover, for $\varphi \in \mathcal{C}_c^\infty([0,T);\mathbb{R}^d)$, the limit in (\ref{kabanov}) and an integration by parts, together with the limit in (\ref{charatconv}) (observing that the sequence $\{ |X_T^n|\}_{n\in\mathbb{N}} $ is $\mathbb{P}$-a.s.\ bounded), imply that, $\mathbb{P}$-a.s., 
\begin{align*}
\int_{[0,T]} \varphi_t dB_t  = \lim_m \frac{1}{m} \sum_{j=1}^m \int_{[0,T]} \varphi_t dX_t^{n_j} = -\lim_m \frac{1}{m} \sum_{j=1}^m \int_0^T X_t^{n_j} \varphi_t'   dt = \int_{[0,T]} \varphi_t dX_t.
\end{align*}
Therefore, by the fundamental lemma of the Calculus of Variation (see Theorem 1.24 at p. 26 in \cite{Dacorogna}), the right-continuity of $X$ and $B$, and the convergence of $X_T^{n_j}$ to $X_T$, we have  $B_t=X_t$ for all $ t \in [0,T]$, $\mathbb{P}$-a.s. 
This identification allows to conclude, using (\ref{limit_Z}) and uniform integrability estimates as in (\ref{uniformintegraaplpha}), that
\begin{equation*}
\mathbb{E}[Z]= \lim_m \frac{1}{m} \sum_{j=1}^m \mathbb{E} \bigg[ \int_{[0,T]} F_t \, d X_t^{n_j} \bigg] = \lim_m \mathbb{E} \bigg[ \int_{[0,T]} F_t \, d {B}_t^{m} \bigg] = \mathbb{E} \bigg[ \int_{[0,T]} F_t \, d X_t \bigg]. 
\end{equation*}
The latter, combined with (\ref{limit_Z}), completes the proof of the lemma. 
\end{proof}

\smallskip
\textbf{Acknowledgements.} 
Financial support by the German Research Foundation (DFG) through the Collaborative Research Centre 1283 ``Taming uncertainty and profiting from randomness and low regularity in analysis, stochastics and their application'' is gratefully acknowledged.
We are grateful to Peter Bank, Markus Fischer, Ulrich Horst, Max Nendel, Frank Riedel, and Jan-Henrik Steg for fruitful conversations.



\end{document}